\newcommand{\CC}{{\mathbb C}}
\newcommand{\cE}{{\mathcal E}}
\newcommand{\cS}{{\mathcal S}}
\newcommand{\cZ}{{\mathcal Z}}
\newcommand{\cU}{{\mathcal U}}
\newcommand{\cF}{{\mathcal F}}
\newcommand{\cR}{{\mathcal R}}
\newcommand{\cQ}{{\mathcal Q}}
\newcommand{\cG}{{\mathcal G}}
\newcommand{\cO}{{\mathcal O}}
\newcommand{\cW}{{\mathcal W}}
\newcommand{\cSS}{\widetilde{\mathcal S}}
\newcommand {\PP}{\mathbb{P}}
\newcommand {\HH}{\mathbb{H}}
\newcommand{\cH}{{\mathcal H}}
\newcommand{\cL}{{\mathcal L}}
\newcommand{\ZZ}{\mathbb{Z}}
\newcommand{\KK}{\mathbb{K}}
\DeclareMathOperator{\Syz}{Syz}
\DeclareMathOperator{\GSyz}{{\cG}{\cS}yz}
\DeclareMathOperator{\SSyz}{{\cS}yz}
\DeclareMathOperator{\Tw}{Tw}
\DeclareMathOperator{\VB}{VB}
\DeclareMathOperator{\Spl}{Spl}
\DeclareMathOperator{\SSpl}{{\cS}pl}
\DeclareMathOperator{\Pic}{Pic}
\DeclareMathOperator{\End}{End}
\DeclareMathOperator{\rk}{rk}
\DeclareMathOperator{\id}{id}
\DeclareMathOperator{\Coh}{Coh}
\DeclareMathOperator{\Ext}{Ext}
\DeclareMathOperator{\Aut}{Aut}
\DeclareMathOperator{\Hilb}{Hilb}
\DeclareMathOperator{\Spec}{Spec}
\DeclareMathOperator{\eval}{eval}
\DeclareMathOperator{\Quot}{Quot}
\DeclareMathOperator{\Gm}{\mathbb{G}_{\text{m}}}
\newtheorem{theorem}{Theorem}[section]
\newtheorem{lemma}[theorem]{Lemma}
\newtheorem{proposition}[theorem]{Proposition}
\newtheorem{corollary}[theorem]{Corollary}
 \theoremstyle{definition}
\newtheorem{definition}[theorem]{Definition}
\newtheorem{example}[theorem]{Example}
\newtheorem{remark}[theorem]{Remark}
\newtheorem{notation}[theorem]{Notation}
\title[Moduli of syzygy bundles]{Moduli of generalized syzygy bundles}
\date{\today}
\begin{document}

 \author[B.\ Fantechi]{Barbara Fantechi} 
 \address{SISSA Via Bonomea 265, I-34136 Trieste, Italy}
  \email{fantechi@sissa.it, ORCID 0000-0002-7109-6818}.
  \author[R.\ M.\ Mir\'o-Roig]{Rosa M.\ Mir\'o-Roig} 
  \address{Facultat de
  Matem\`atiques i Inform\`atica, Universitat de Barcelona, Gran Via des les
  Corts Catalanes 585, 08007 Barcelona, Spain} \email{miro@ub.edu, ORCID 0000-0003-1375-6547}

\thanks{The first author has been partially supported by PRIN 2017 {\em Moduli theory and birational classification}} 
\thanks{The second author has been partially supported by the grant PID2019-104844GB-I00}

\begin{abstract} Given a vector bundle $F$ on a variety $X$ and $W\subset H^0(F)$ such that the evaluation map $W\otimes \cO_X\to F$ is surjective, its kernel $S_{F,W}$ is called generalized syzygy bundle.

Under mild assumptions, we construct a moduli space $\cG^0_U$ of simple generalized syzygy bundles, and show that the natural morphism $\alpha$ to the moduli of simple sheaves is a locally closed embedding. If moreover  $H^1(X,\cO_X)=0$, we find an explicit open subspace $\cG^0_V$ of $\cG^0_U$ where the restriction of $\alpha$ is an open embedding.

    In particular, if $\dim X\ge 3$ and $H^1(\cO_X)=0$, starting from an ample line bundle (or a simple rigid vector bundle) on $X$  we construct recursively open subspaces of moduli spaces of simple sheaves on $X$ that are smooth, rational, quasiprojective varieties.
\end{abstract}

\maketitle

\tableofcontents

\section{Introduction}

Throughout this paper $X$ will be a reduced connected  Gorenstein projective variety of pure dimension $n\ge 2$ defined over an algebraically closed field $\KK$.
Let $F$ be a vector bundle on $X$ generated by its global sections; its {\em syzygy bundle} is the kernel $S$ of the (surjective) evaluation map $H^0(X,F)\otimes{\mathcal O}_X\to F$.

 Arising in a variety of geometric and algebraic problems, syzygy bundles $S$  have been intensively studied from different points of view ranging from the syzygies of $X$ (and the $N_p$ properties in the sense of Green) to questions of tight closure. In particular, when $F=\cO_X(L)$ is globally generated,  many efforts have been invested in knowing whether  the associated syzygy bundle is stable with respect to some polarization and recent results (see, for instance, \cite{BP}, \cite{CL}, \cite{HMP}, \cite{MR} and \cite{T})
   have led to the so-called Ein-Lazarsfeld-Mustopa Conjecture \cite[Conjecture 4.14]{ELM}.

More generally, we say a subspace $W\subset H^0(X,F)$ is generating if the evaluation map $\eval_{W}:W\otimes \cO_X\to F$ is surjective. In this case, the vector bundle $S_{F,W}:=\ker(\eval_W)$ is called the {\em generalized syzygy bundle} associated to $(F,W)$. An important particular case is when $F$ is an invertible sheaf $\mathcal O_X(L)$ and $W\subset |L|$ is a base-point-free linear system. This more general notion was first introduced by Brenner in \cite{B}, where he asked if there always exists a subspace $W\subset H^0(\cO_{\PP^n}(d))$ such that $S_{\cO_{\PP^n}(d),W}$ is stable. For an affirmative answer to this question the reader can look at \cite{C}, \cite{CMM} and \cite{MM}.

In this paper we study the moduli spaces of generalized syzygy bundles $S_{F,W}$; for readability, we write just $S$  when no confusion is likely to arise. We always assume $\dim X\ge 2$; the curve case is very different and it has been extensively studied (e.g., Butler  \cite{Bu} shows that the syzygy bundle of a line bundle of degree $d$ is stable if $d\ge 2g+1$).

We focus on simple vector bundles $F$ with fixed rank $r$ and Chern classes $c_i$; they are naturally an open subspace in the algebraic space of simple sheaves $\Spl(r;c_i)$. For any  integer $w$ we define the moduli space of generalized syzygy bundles $\cG^0_U$ to be the natural algebraic space structure on pairs $(F,W)$ where $F\in \Spl(r;c_i)$ is a simple vector bundle such that $H^1(F)=H^1(F^*)=0$ and $W\subset H^0(X,F)$ is a generating $w$-dimensional subspace. We show that each generalized syzygy bundle $S_{W,F}$ so obtained  is simple, of rank $r'=w-r$ and Chern classes $c_i'$ given by $\sum_{i}c_{i}'t^i =  (\sum _{i }c_{i}(F)t ^i)^{-1}$.

Our first result is that the set maps $(F,W)\mapsto F$ and $(F,W)\mapsto S_{F,W}$ are induced by morphisms $\pi:\cG^0_U\to \Spl(r;c_i)$ and $\alpha:\cG^0_U\to \Spl(r';c_i')$.

The morphism $\pi:\cG^0_U\to \Spl(r;c_i)$ is smooth and \'etale locally quasiprojective over its image $U$. In fact, it is locally open in a Grassmann bundle.

For every $\cL\subset \Spl(r;c_i)$ locally closed subspace, we let $\cG^0_{\cL}:=\pi^{-1}(\cL)$. Note that $\cG^0_{\cL}$ is smooth if and only if $\cL\cap U$ is smooth.

The main theorems of this paper are the following:

\begin{theorem}\label{embedding}
    The morphism $\alpha:\cG^0_U\to \Spl(r';c_i')$ is a locally closed embedding. 
\end{theorem}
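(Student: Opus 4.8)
The plan is to construct an explicit two-sided inverse to $\alpha$ over a locally closed subspace $\cH\subset\Spl(r';c_i')$; producing such an inverse is exactly what it means for $\alpha$ to be a locally closed embedding. Everything hinges on a rigidity property of the reconstruction of $(F,W)$ from $S=S_{F,W}$.

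First I would dualize the defining sequence $0\to S\to W\otimes\cO_X\to F\to 0$ to get $0\to F^{*}\to W^{*}\otimes\cO_X\xrightarrow{\pi}S^{*}\to 0$. Since $H^1(F^{*})=0$, the map $H^0(\pi)\colon W^{*}\to H^0(S^{*})$ is surjective, and $\pi$ is the canonical morphism attached to it: $\pi=\ev\circ(H^0(\pi)\otimes\cO_X)$, with $\ev\colon H^0(S^{*})\otimes\cO_X\to S^{*}$ evaluation. Setting $\Sigma:=\ker(\ev)$, the syzygy bundle of $S^{*}$, and $K:=\ker H^0(\pi)$, this presents $F^{*}$ as the pullback along $\Sigma\hookrightarrow H^0(S^{*})\otimes\cO_X$ of the split extension $0\to K\otimes\cO_X\to W^{*}\otimes\cO_X\to H^0(S^{*})\otimes\cO_X\to 0$; hence $F^{*}\cong(K\otimes\cO_X)\oplus\Sigma$. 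The key point is now that, $F$ being simple, so is $F^{*}$, and this direct sum decomposition forces $K=0$, so that $F^{*}\cong\Sigma$ canonically — i.e. $F^{*}$ is the full syzygy bundle of $S^{*}$. Dualizing back one recovers $W^{*}\cong H^0(S^{*})$ and the inclusion $W\subset H^0(F)$ by applying $H^0$ to the canonical sequence $0\to S\to H^0(S^{*})^{*}\otimes\cO_X\to\Sigma^{*}\to 0$. Thus $S$ determines $(F,W)$ up to isomorphism, so $\alpha$ is injective on points; one can also check, in the same spirit, from the long exact sequences that $0\to S\to W\otimes\cO_X\to F\to 0$ yields under $\operatorname{Hom}(-,F)$, $\operatorname{Hom}(S,-)$, $\operatorname{Hom}(-,S)$ together with $H^0(S)=0$, $H^1(F)=H^1(F^{*})=0$ and $\End(F)=\End(S)=\KK$, that $d\alpha$ is injective, so $\alpha$ is unramified.

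Then I would globalize. Let $\cH\subset\Spl(r';c_i')$ be the set of classes $[S]$ with $S$ a vector bundle, $H^0(S)=0$, $h^0(S^{*})=w$, $S^{*}$ globally generated, and $\Sigma:=\ker(\ev\colon H^0(S^{*})\otimes\cO_X\to S^{*})$ simple with $H^1(\Sigma)=H^1(\Sigma^{*})=0$. Local freeness, global generation, simplicity and vanishing of an $H^1$ are open conditions, and $h^0(S^{*})=w$ is locally closed by semicontinuity, so $\cH$ is a locally closed subspace. Over $\cH$ the relative form of the construction — relative $p_{*}$ (which commutes with base change here because $h^0(S^{*})$ is constant and $H^0(S)=0$), relative evaluation, relative duals — yields a family $\cF:=\big(\ker(p^{*}p_{*}\cS^{*}\to\cS^{*})\big)^{*}$ of simple bundles with $H^1(\cF)=H^1(\cF^{*})=0$ fibrewise, together with the rank-$w$ bundle $\big(p_{*}\cS^{*}\big)^{*}$ and its natural generating map to $\cF$; this is a $T$-point of $\cG^0_U$ for every $T\to\cH$, hence a morphism $\beta\colon\cH\to\cG^0_U$. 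The first paragraph, applied fibrewise and compatibly with the line-bundle ambiguity inherent in the moduli of simple sheaves, gives $\beta\circ\alpha=\operatorname{id}$ and $\alpha\circ\beta=\operatorname{id}$. Therefore $\alpha$ is the composite of the isomorphism $\cG^0_U\xrightarrow{\sim}\cH$ with the locally closed immersion $\cH\hookrightarrow\Spl(r';c_i')$, which is the assertion.

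The step I expect to be the crux is the rigidity observation — simplicity of $F$ collapsing $F^{*}\cong(K\otimes\cO_X)\oplus\Sigma$ to $F^{*}\cong\Sigma$ — since this is what makes $(F,W)$ canonically reconstructible in the first place. The remaining technical burden is to verify that $\cH$ is genuinely locally closed and, above all, that the reconstruction commutes with base change over an arbitrary, possibly non-reduced, base so that $\beta$ really is a morphism of algebraic spaces; this is precisely where the hypotheses $H^1(F)=H^1(F^{*})=0$ (the vanishing of $R^1p_{*}$ of the universal family and of its dual) and $H^0(S)=0$ are used.
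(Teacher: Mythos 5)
Your overall strategy coincides with the paper's: reconstruct $(F,W)$ canonically from $S$ as $\bigl(\ker(\ev\colon H^0(S^*)\otimes\cO_X\to S^*)^*,\,H^0(S^*)^*\bigr)$, cut out the locus in the target where this reconstruction makes sense by locally closed conditions, and exhibit the relative reconstruction as a two-sided inverse. One genuinely different (and rather elegant) detail: you obtain the injectivity of $W^*\to H^0(S^*)$, i.e.\ $K=\ker H^0(\pi)=0$, from the splitting $F^*\cong (K\otimes\cO_X)\oplus\Sigma$ and the indecomposability of the simple bundle $F^*$, whereas the paper gets $H^0(F^*)=0$ separately from a rank-one-endomorphism argument (its Lemma \ref{aux}); both work, provided you note that $\Sigma\neq 0$ (which follows from $H^0(S)=0$, $S\neq 0$) and dispose of the degenerate case $F=\cO_X$ (excluded since $w\ge n+r>h^0(\cO_X)$).

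The genuine gap is in the globalization. You build $\beta$ by applying $p_*$, relative evaluation and duals to ``the'' family $\cS^*$ over $\cH\subset\Spl(r';c_i')$ — but $\Spl(r';c_i')$ is only a coarse moduli space and carries no universal family, so this relative construction is not literally available, and your phrase ``compatibly with the line-bundle ambiguity'' is precisely the point that needs proof rather than a remark. The paper resolves this by carrying out the entire construction on the stack $\SSpl(r';c_i')$ (where a universal family exists), producing $\bar\gamma\colon\SSyz^w_{\cU}\to\cG^0_{\cU}$ there, and only then descending to coarse spaces via the $\Gm$-gerbe structure; equivalently one must check that the reconstruction $S\mapsto(F,W)$ is equivariant under twisting $\cS$ by $p^*L$ for $L$ a line bundle on the base (it sends $(\cF,\cW)$ to $(\cF\otimes p^*L,\cW\otimes L)$, hence gives a well-defined point of the Grassmann bundle over $U$), and then descend étale-locally. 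A second, related, imprecision: defining $\cH$ by ``$h^0(S^*)=w$, locally closed by semicontinuity'' only pins down the underlying set; to make $p_*\cS^*$ locally free of rank $w$ \emph{commuting with arbitrary (possibly non-reduced) base change} — which your construction of $\cF$ and the verification $\beta\circ\alpha=\id$, $\alpha\circ\beta=\id$ both require — you need the universal flattening/Fitting-ideal stratification, as in the paper's Definition \ref{defSyz}, not just Grauert over a reduced base. With these two points repaired the argument closes.
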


We remark that the open subspace $U\subset \Spl(r;c_i)$ of points $[F]$ such that $F$ is a vector bundle  globally generated and $H^1(F)=H^1(F^*)=0$ can be assumed to be non-empty after sufficient twisting by an ample line bundle, as explained in Remark \ref{assumptions}.

The theorem implies that the restriction of $\alpha$ induces a locally closed embedding $\cG^0_{\cL}\to \Spl(r';c_i')$. 
In case $X$ is a $K3$ surface and $\cL$ is a Lagrangian in the symplectic space $\Spl(r;c_i)$, we show in \cite{FM} that the image of $\cG^0_{\cL}$ (if nonempty) is also a Lagrangian in $\Spl(r';c_i')$. 

\begin{theorem}\label{syz_open}
    Assume moreover that $H^1(X,\cO_X)=0$. Let $V$ be the open subspace of $U$ defined by $H^2(F^*)=0$. Then the morphism $\alpha:\cG^0_V\to \Spl(r';c_i')$ is an open embedding.
\end{theorem}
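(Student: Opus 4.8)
The plan is to deduce the statement from Theorem \ref{embedding} together with an infinitesimal lifting argument. By Theorem \ref{embedding}, $\alpha$ restricted to the open subspace $\cG^0_V=\pi^{-1}(V)\subset\cG^0_U$ is again a locally closed embedding, hence a monomorphism locally of finite presentation, and in particular formally unramified. Since a formally smooth locally closed embedding is automatically an open embedding — factor it as a closed embedding followed by an open one; the closed-embedding part is then formally smooth, hence an \'etale monomorphism, hence an open embedding — it suffices to prove that $\alpha\colon\cG^0_V\to\Spl(r';c_i')$ is \emph{formally smooth}. I would verify this by the standard lifting criterion: after the usual reductions, one is given a small extension $A\to A_0$ of Artinian local $\KK$-algebras with kernel $J\cong\KK$, a point $(F_{A_0},W_{A_0})\in\cG^0_V(A_0)$ with generalized syzygy bundle $S_{A_0}:=S_{F_{A_0},W_{A_0}}$, and an $A$-flat deformation $S_A$ of $S_{A_0}$, and one must produce $(F_A,W_A)\in\cG^0_V(A)$ lifting $(F_{A_0},W_{A_0})$ with $S_{F_A,W_A}\cong S_A$ compatibly.

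The heart of the argument is to lift the tautological inclusion $\phi_0\colon S_{A_0}\hookrightarrow W_{A_0}\otimes\cO_{X_{A_0}}$. First choose a free $A$-module $W_A$ of rank $w$ lifting $W_{A_0}$. The obstruction to extending $\phi_0$ to a morphism $\widetilde\phi\colon S_A\to W_A\otimes\cO_{X_A}$ lies, by flatness of $S_A$ over $A$, in $\operatorname{Ext}^1_X(S,W\otimes\cO_X)=W\otimes H^1(X,S^*)$, where $(F,W)$ is the closed point of our family and $S=S_{F,W}$. Dualizing the defining sequence $0\to S\to W\otimes\cO_X\to F\to 0$, in which all terms are locally free, yields $0\to F^*\to W^*\otimes\cO_X\to S^*\to 0$, and the relevant piece of its long exact cohomology sequence is $W^*\otimes H^1(X,\cO_X)\to H^1(X,S^*)\to H^2(X,F^*)$. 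Here $H^2(X,F^*)=0$ because $[F]\in V$, and $H^1(X,\cO_X)=0$ by hypothesis; hence $H^1(X,S^*)=0$, the obstruction vanishes, and $\widetilde\phi$ exists. This is exactly where the two extra hypotheses of the theorem enter, and I expect it to be the main point of the proof.

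Once $\widetilde\phi$ is available, I would set $F_A:=\coker\widetilde\phi$. Since $\widetilde\phi$ reduces modulo the nilpotent ideal $J$ to the split injection $\phi_0$, a local matrix computation over a chart of $X_A$ (the relevant square block of $\widetilde\phi$ is congruent to the identity modulo $J$, hence invertible) shows that $\widetilde\phi$ is injective and that $F_A$ is locally free over $\cO_{X_A}$; thus $0\to S_A\to W_A\otimes\cO_{X_A}\to F_A\to 0$ is a short exact sequence of $A$-flat sheaves reducing to the original one, so $F_A$ deforms $F_{A_0}$ and $S_{F_A,W_A}=S_A$. It then remains to check that $(F_A,W_A)$ genuinely lies in $\cG^0_V(A)$: surjectivity of $\eval_{W_A}$ is automatic; simplicity of $F_A$ and the vanishings $H^1(F_A)=H^1(F_A^*)=H^2(F_A^*)=0$ follow by the usual induction on the length of $A$ and by semicontinuity, the conditions holding on the closed fibre and $\Spec A$ being local; and $W_A\hookrightarrow\pi_{A*}F_A$ is a sub-bundle because $0\to H^0(S_A)\to W_A\to H^0(F_A)$ is exact and $H^0(S_A)=0$, the latter again by induction, starting from $H^0(S)=0$, which holds since $W\hookrightarrow H^0(F)$. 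Granting all this, $\alpha$ is formally smooth and the theorem follows. The remaining verifications — flatness, openness of the conditions defining $U$ and $V$, and the general nonsense relating ``formally smooth plus locally closed embedding'' to ``open embedding'' — are routine, and I would only sketch them.
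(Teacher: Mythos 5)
Your proposal is correct, and the crux is the same as in the paper: everything reduces, via Theorem \ref{embedding}, to a smoothness statement whose only nontrivial input is the vanishing $H^1(S^*)=0$, extracted from $0\to F^*\to W^*\otimes\cO_X\to S^*\to 0$ together with $H^1(\cO_X)=H^2(F^*)=0$ --- exactly the computation at the end of the paper's proof. What differs is how the smoothness is verified. The paper passes to the principal frame bundle $\widetilde{\cG^0_U}\to\cG^0_U$, identifies it with an open subscheme of $\Quot^P(\cO_X^{\oplus w})$ so that tangent and obstruction spaces become $H^0(S^*\otimes F)$ and $H^1(S^*\otimes F)$, and then applies the criterion of Proposition \ref{criterion_smoothness} to the long exact sequence of $0\to S\otimes S^*\to (S^*)^{\oplus w}\to F\otimes S^*\to 0$; surjectivity on tangent spaces and injectivity on obstructions both follow from $H^1(S^*)=0$. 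You instead verify formal smoothness by hand: you lift the inclusion $\phi_0\colon S_{A_0}\hookrightarrow W_{A_0}\otimes\cO_{X_{A_0}}$ along a small extension, locating the obstruction in $W\otimes H^1(S^*)$, and then take $F_A:=\coker\widetilde\phi$. Your version is the ``nonlinear'' form of the paper's linearized computation: it is more elementary in that it avoids the frame bundle and the Quot-scheme tangent/obstruction identification, but it pays for this by having to check explicitly that the constructed pair $(F_A,W_A)$ lies in $\cG^0_V(A)$ and that $S_{F_A,W_A}\cong S_A$ compatibly over $A_0$ --- checks that the paper's criterion packages away. Both arguments also quietly use the $\Gm$-gerbe comparison between $\alpha$ and $\bar\alpha$ to replace maps to the coarse space $\Spl(r';c_i')$ by actual families; your ``usual reductions'' should be understood as working with $\bar\alpha\colon\cG^0_{\cV}\to\SSpl(r';c_i')$, as the paper does.
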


For this theorem to be meaningful, we want $V$ non-empty. This happens very rarely if $\dim X= 2$, but can be achieved by sufficient twisting if $\dim X\ge 3$. In particular, if $X$ is a Calabi-Yau threefold, then $U=V$.

While our statements are about algebraic spaces, our proofs heavily involve the corresponding moduli stacks, denoted respectively by $\SSpl(r;c_i)$ and $\cG^0_{\cU}$. We first  define the morphism $\bar\alpha: \cG^0_{\cU}\to \SSpl(r';c_i')$ by using the  existence of a universal family; the morphism $\alpha$ is then induced by the universal property of coarse moduli spaces.

To prove Theorem \ref{embedding} we first check that  $\alpha$ is injective on isomorphism classes of points and that its differential is also injective. Then we go back to algebraic stacks, we construct in Definition \ref{defSyz} a locally closed substack $\SSyz_{\cU}^{w}$ in $\SSpl(r';c_i')$ through which the morphism $\bar\alpha$ factors, and a morphism $\bar\gamma:\SSyz^{w}_{\cU}\to \cG^0_{\cU}$. Finally, we prove  that $\bar\alpha:\cG^0_{\cU}\to \SSyz^{w}_{\cU}$ is an isomorphism with $\bar\gamma$ as inverse.

Since $\alpha:\cG^0_V\to \Spl(r';c_i')$ is a locally closed embedding, to prove Theorem \ref{syz_open} it is enough to show that $\alpha $ is smooth at every point of $\cG^0_V$;  we demonstrate this by using the infinitesimal criterion of smoothness.

\vskip 4mm

We explain why we have our assumptions in the main theorems. We choose to work with simple bundles so that we have a moduli algebraic space. The condition $H^1(F^*)=0$ is required to assure that $S_{F,W}$ is simple while the condition $H^1(F)=0$ is used to construct the Grassmann bundle $\cG_U$, as it guarantees that $v=h^0(F)$ is locally constant on $U$.

The Example \ref{exp2} and the following lemmas show that the condition $H^1(X,\cO_X)=0$ is necessary in Theorem \ref{syz_open}. A possible way to construct enough sheaves  to fill an open subset in $\Spl(r';c_i')$ is discussed in Subsection \ref{very_gen}.

The condition $H^2(F^*)=0$ is  needed for the smoothness criterion to apply; we could weaken it by working with the open subspace of $\cG^0_U$ parametrizing pairs $(F,W)$ such that $H^2(F^*)\to W^*\otimes H^2(\cO_X)$ is injective. Note that if $H^2(\cO_X)=0$ this gives us back the condition $H^2(F^*)=0$.

Our assumptions are verified in a large number of significant cases. In the last section, we include many explicit examples.
Moreover, we  set up a recursive construction of open/locally closed subspaces of  $\Spl(r;c_i)$ (see Corollaries \ref{coro1} and \ref{coro2}, and Remark \ref{remcoro} that are smooth, rational, quasiprojective varieties.

\vskip 2mm

\noindent {\bf Acknowledgement.} The first author is thankful for hospitality at Universitat de Barcelona and Chennai Mathematical Institute, as well as to Sissa for a sabbatical year, during which this research took place.

\section{Notation and background material}

 We fix an  algebraically closed field  $\KK = \overline{\KK}$. All schemes/algebraic spaces/stacks will be locally of finite type over $\KK$ and points will always be $ \KK$-valued points.  We also fix a reduced connected Gorenstein projective variety $X$ of pure dimension $n\ge 2$ and we denote by $\omega _X$ its canonical line bundle.  

We denote $\Gm$ as $\Spec \KK[t,t^{-1}]$ with the group structure defined by multiplication.

We will frequently fix the Chern classes of a vector bundle or a coherent sheaf $F$ on $X$. If $\KK$ is the field of complex numbers, we can think of $c_i(F)$ as elements in $H^{2i}(X^{an})$. In general, we will view them as degree $-i$ operators on Chow groups on $X$ modulo algebraic (not just rational) equivalence, so that Chern classes are locally constant in the fibres of a flat family.

\subsection{Moduli spaces and moduli stacks of simple bundles}

In this section, we collect for the reader's convenience the definitions and results that we will use later and we also fix relevant notation.

A coherent sheaf $E$ on $X$ is {\em simple} if the natural injection $$H^0(X,\cO_X) \to \End_{\cO_X}(E)$$ is an isomorphism. Thus, $E$ being simple is equivalent to $\End_{\cO_X}(E)=\KK\cdot \id_E$.

\begin{definition} A {\em family of simple sheaves} on $X$ {\em parameterized by} a scheme $B$ is a coherent sheaf $\cF$ on $X\times B$ such that
\begin{itemize}
\item[(1)] $\cF$ is flat over $B$, and
\item[(2)] For any $b\in B$, $\cF_b:=\cF_{|X\times \{b\}}$ is a simple sheaf.
\end{itemize}
Isomorphisms of families $\cF_1$ and $\cF_2$ are isomorphisms of sheaves on $X\times B$.
\end{definition}

The pullback of a family $\cF$ parameterized by $B$ via a morphism $\varphi :B_1\to B$ is $(id_X\times \varphi)^*\cF.$ 

\begin{theorem}  The pseudofunctor $(Sch)^{op}\to (Groupoids)$ associating to a scheme $B$ the groupoid of families of simple sheaves parametrized by $B$ is an algebraic stack $\cS pl(X)$.
\end{theorem}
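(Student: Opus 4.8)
The plan is to verify the two Artin-type axioms for algebraic stacks: that the pseudofunctor $\cS pl(X)$ is a stack for the fppf (or étale) topology, and that it admits a smooth atlas by a scheme (equivalently, that it is an Artin stack because it has a representable, smooth, surjective morphism from a scheme, with quasi-compact and separated diagonal).

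\medskip

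\emph{Stack axiom (descent).} First I would show that $\cS pl(X)$ is a stack, i.e. that isomorphisms form a sheaf and that objects descend along fppf covers. The key input is flat descent for quasi-coherent sheaves (applied to $X\times B$ with its fppf cover $X\times B'\to X\times B$ coming from $B'\to B$): a descent datum for coherent sheaves is effective, and flatness over the base as well as coherence descend. The only extra point beyond the classical statement for $\Coh$ is that the simpleness condition on fibres is local on $B$ in the fppf topology — but that is immediate since the fibres $\cF_b$ are literally unchanged under base change of $B$, and the condition $\End_{\cO_X}(\cF_b) = \KK\cdot\id$ is a pointwise condition. So $\cS pl(X)$ is an fppf stack, hence in particular an étale stack.

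\medskip

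\emph{Atlas / algebraicity.} The standard route is to present $\cS pl(X)$ as an open substack of a Quot-type parameter stack and then pass to a quotient presentation. Concretely: any simple sheaf $E$ with fixed Hilbert polynomial $P$ (with respect to a chosen polarization $\cO_X(1)$) becomes globally generated after a twist, and the same bound holds in flat families over a Noetherian base; so after decomposing into open and closed pieces indexed by $P$ and by the twist, every family is étale-locally (indeed Zariski-locally on the base) a quotient $\cO_X(-m)^{\oplus N}\twoheadrightarrow \cF$. Such quotients are parametrized by an open subscheme $Q$ of $\Quot_{X}(\cO_X(-m)^{\oplus N}, P)$ — open because flatness is open, being a vector bundle (if one wants the bundle locus) is open, and \emph{simpleness is an open condition} on $Q$ by upper semicontinuity of $\dim\End$ together with the fact that $\dim\End\ge 1$ always, so $\{\dim\End = 1\}$ is open. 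The group $G=\mathrm{GL}_N$ (or $\mathrm{PGL}_N$, after accounting for scalars) acts on $Q$ by changing the chosen surjection, and two points of $Q$ give isomorphic families iff they lie in the same $G$-orbit, with the isomorphisms matching the stabilizers; hence $\cS pl(X)$ is the quotient stack $[Q/G]$ on the locus where we have fixed a generating twist, and in general it is the union over $m$ of the images of these $[Q/G]$, glued along the stack axiom. Since $Q$ is a quasi-projective scheme of finite type and $G$ is a smooth affine algebraic group, $[Q/G]$ is an algebraic (Artin) stack; taking the (disjoint union of) smooth atlases $Q\to[Q/G]$ and then the union over $m$ gives a smooth surjective atlas for $\cS pl(X)$. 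Finally the diagonal is representable, quasi-compact and separated because $\mathrm{Isom}$ sheaves between two families of coherent sheaves are representable by schemes affine and of finite type over the base (they sit inside the linear scheme $\mathrm{Hom}$, cut out by the open invertibility condition), which is exactly what one needs.

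\medskip

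\emph{Where the work is.} The descent axiom is essentially formal once one cites fppf descent for coherent sheaves, so the substantive step — and the main obstacle — is the atlas: one must (i) produce a \emph{uniform} twisting bound valid in flat families so that the decomposition into quotient stacks $[Q/G]$ actually covers $\cS pl(X)$, and (ii) check carefully that the simple locus in $\Quot$ is open and that the $G$-action is compatible with isomorphisms of families (so that $[Q/G]$ really is the moduli stack and not just a surjection onto it). Both are standard — the boundedness is a Grothendieck/Mumford-type regularity argument and the openness of simpleness is semicontinuity of $\hom(E,E)$ — but they are the places where genuine algebraic geometry, rather than abstract nonsense, enters. I would present the quotient-stack construction, remark that it agrees with the functor-of-points description via the universal quotient on $Q$, and leave the routine verifications (flatness is open, $\mathrm{Isom}$ is affine finite type) to citations.
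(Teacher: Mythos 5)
Your proposal is correct in outline, but it takes a genuinely different route from the paper: the paper's entire proof is a one-line citation of Theorem 4.6.2.1 of Laumon--Moret-Bailly, which asserts that the stack of flat families of coherent sheaves on a projective scheme is algebraic, the simple locus then being an open substack (openness of simpleness by semicontinuity of $\hom(\cF_b,\cF_b)$, exactly as you say). You instead reprove the general algebraicity statement from scratch by the standard Quot-scheme presentation: fppf descent for the stack axiom, then an atlas $\bigsqcup_m Q_m\to \cS pl(X)$ with $Q_m$ open in a Quot scheme and $\cS pl(X)$ covered by the open substacks $[Q_m/G]$. This buys self-containedness and makes visible where boundedness and semicontinuity enter, at the cost of length; the paper's citation is the normal practice and also covers non-simple coherent sheaves, which your argument would too. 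Two points in your sketch deserve tightening. First, for the identification of isomorphism classes with $G$-orbits you must cut $Q_m$ down to the open locus where the $N$ given sections of $\cF_b(m)$ form a \emph{basis} of $H^0(\cF_b(m))$ (equivalently $H^i(\cF_b(m))=0$ for $i>0$ and the induced map $\KK^N\to H^0(\cF_b(m))$ is bijective); without this, two points of $Q_m$ with isomorphic quotients need not lie in one orbit. Second, for the stack as defined in the paper (isomorphisms of families are isomorphisms of sheaves, so every simple sheaf has automorphism group $\Gm$) the correct group is $\mathrm{GL}_N$, whose stabilizers are exactly $\Aut(\cF_b)=\Gm$; the $\mathrm{PGL}_N$ quotient would instead produce the rigidified stack. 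Neither issue is a gap in the idea, but both are exactly the places where the ``routine verifications'' you defer actually have content.
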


\begin{proof} This is a special case of Theorem 4.6.2.1 in \cite{L-MB}.
\end{proof}

\begin{remark}\label{open_in_moduli}
    By definition, an open substack $\cU$ of $\SSpl(X)$  is determined by its points, i.e., the set of (isomorphism classes of) coherent sheaves $\cF$ on $X$ such that $\cF\in \cU$. To show that a set of isomorphism classes $\cU$ defines an open substack is equivalent to showing that, for any family of sheaves parameterized by a scheme $B$, the set \[
    B_{\cU}:=\{b\in B\,|\, \cF_b\in \cU\}
    \]
    is open in $B$.
\end{remark}
The stack $\cS pl(X)$ has infinitely many connected components. If we fix the rank $r$ and the Chern classes $c_i$, we can consider the substack of families $\cF$ such that for any $b\in B$, $\cF _b$ has $rank(\cF_b)=r$ and $c_i(\cF_b)=c_i$. The stack $\cS pl(r;c_i)$ so obtained is open and closed in $\cS pl(X)$. 
\newcommand{\eps}{\varepsilon}

\begin{definition}
     A morphism of algebraic stacks $\eps:G\to X$, with $X$ an algebraic space, is called a $\Gm$-gerbe if and only if, (\'etale) locally on $X$, $G$ is isomorphic to a product $X\times B\Gm$, where $B\Gm$ is the stack quotient of a point by the trivial action of $\Gm$.
\end{definition}

\begin{proposition}
The stack $\cS pl(r;c_i)$ has a coarse moduli space $\Spl(r;c_i)$ which is a (possibly non separated) algebraic space. The structure morphism $$\eps:\SSpl(r;c_i)\to \Spl(r;c_i) $$
is a $\Gm$-gerbe.
\end{proposition}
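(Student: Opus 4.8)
The plan is to establish both assertions simultaneously by recognising $\SSpl(r;c_i)$ as a stack whose inertia is banded by $\Gm$ and then passing to its rigidification; the coarse space and the gerbe structure come out together.

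First --- and this is the one step with real content --- I would analyse automorphisms in families. Fix a family $\cF$ of simple sheaves over a scheme $B$ with projection $p\colon X\times B\to B$; the goal is to show $\underline{\Aut}_B(\cF)=\Gm\times B$ compatibly with base change. Since $p$ is proper and $\cF$ is coherent and flat over $B$, Grothendieck's representability of the $\mathrm{Hom}$-functor applies: the endomorphism functor $T\mapsto\End_{\cO_{X_T}}(\cF_T)$ is representable by a linear $B$-scheme $\Spec_B\Sym_{\cO_B}(\cW)$ for a coherent sheaf $\cW$ on $B$, with $T$-points $\mathrm{Hom}_{\cO_T}(\cW_T,\cO_T)$, and this description is compatible with base change. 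The identity of $\cF$ provides a surjection $\cW\to\cO_B$ of coherent sheaves which, by simplicity of the $\cF_b$, is an isomorphism on every fibre (both sides being one-dimensional there), hence an isomorphism. Thus $\cW=\cO_B$, so $\underline{\End}_B(\cF)=\mathbb{A}^1_B$ and $\underline{\Aut}_B(\cF)=\Gm\times B$. In stack language this says precisely that the inertia stack of $\SSpl(r;c_i)$ is canonically $\SSpl(r;c_i)\times B\Gm$: every object has automorphism group $\Gm$, and these (automatically central) copies match up functorially.

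Granting this, the rest is formal. Since $\Gm$ is a smooth affine finitely presented group scheme lying in the centre of the inertia, I would invoke the rigidification construction (Abramovich--Corti--Vistoli, Romagny): it produces an algebraic stack, which I take to be the definition of $\Spl(r;c_i)$, together with a canonical morphism $\eps\colon\SSpl(r;c_i)\to\Spl(r;c_i)$ which is by construction a $\Gm$-gerbe. Because $\eps$ annihilates the \emph{whole} automorphism group of every object, $\Spl(r;c_i)$ has trivial inertia, and an algebraic stack with trivial inertia is an algebraic space; it may well be non-separated, since simple sheaves fail the valuative criterion of separatedness in general, and no more is claimed. Finally, $\Spl(r;c_i)$ is the \emph{coarse} moduli space: any morphism from $\SSpl(r;c_i)$ to an algebraic space kills automorphisms and hence factors uniquely through $\eps$, so $\Spl(r;c_i)$ corepresents the moduli problem; and since $\KK=\overline{\KK}$ has trivial Brauer group, every $\Gm$-gerbe over $\Spec\KK$ is neutral, so $\eps$ induces a bijection between isomorphism classes of simple sheaves of type $(r;c_i)$ and $\KK$-points of $\Spl(r;c_i)$. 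The main obstacle is genuinely the first step --- the representability of the endomorphism functor by a linear scheme and the short argument deducing $\underline{\Aut}=\Gm$; everything afterwards, including the rigidification and the coarse-space property, is standard machinery. (Alternatively, one can cite directly the construction of the algebraic space of simple sheaves by Altman--Kleiman.)
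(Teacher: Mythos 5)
Your proof is correct and follows essentially the same route as the paper, which also obtains $\Spl(r;c_i)$ as the $\Gm$-rigidification $\SSpl(r;c_i)^{\Gm}$ via Theorem 5.1.5 of Abramovich--Corti--Vistoli and reads off the coarse-space and gerbe properties from that theorem. The only difference is that you explicitly verify the input hypothesis that the inertia is banded by $\Gm$ (i.e.\ $\underline{\Aut}_B(\cF)=\Gm\times B$ for any family of simple sheaves), a point the paper leaves implicit; your verification via representability of the endomorphism functor and Nakayama is sound.
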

\begin{proof} This is a direct application of Theorem 5.1.5 in \cite{ACV} in the case where $\mathbb S=\Spec \KK$, the stack $\mathcal X=\cS pl(r;c_i)$ and the group scheme $H$ is the multiplicative group $\KK^*$ (or $\mathbb G_m(\KK)$). We let $\Spl(r;c_i):=\mathcal X^H$. By \cite[Theorem 5.1.5]{ACV} (3), each $\KK$-point of $\Spl(r;c_i)$ has trivial automorphism group. Therefore, $\Spl(r;c_i)$ is an algebraic space and thus the coarse space of $\SSpl(r;c_i)$.

This agrees with the stack-free definition of $\Spl(r;c_i)$ (see, for instance, \cite{Mu}) because of the universal property 2 in \cite[Theorem 5.1.5]{ACV}.

The rest follows by the identification of $\mathcal X^H$ with $[\mathcal X/BH]$.
\end{proof}

\begin{remark}\label{gerbe}
 Let $G$ be an algebraic stack, $X$ an algebraic space, and $\eps:G\to X$ a morphism that makes $G$ into a $B\Gm$-gerbe over $X$. Then we have:
\begin{enumerate}
    \item the pullback functor $\eps^*:\Coh(X)\to \Coh(G)$ is fully faithful; 
    \item $\eps^{-1}$ induces a bijection from points on $X$ to isomorphism classes of points on $G$;
    \item $\eps^{-1}$ induces a bijection from open subspaces of $X$ to open substacks of $G$;
    \item for every coherent sheaf $\cF$ on $G$, and every point $x:\Spec \KK\to G$, the fiber $x^*(\cF)$ has a canonical $\Gm$ action;
    \item $\cF\in\Coh(G)$ is (isomorphic to) the pullback of a coherent sheaf on $X$ if and only if, for every $x$ in $G$, the fiber $x^*(\cF)$ is $\Gm$-invariant. 
\end{enumerate}
\end{remark}

By definition, there is a universal family $\cF_{un}\in Coh(X\times \cS pl(r;c_i))$ parameterized by $\cS pl(r;c_i)$ such that for any scheme $B$, the natural functor \[
Mor(B,\cS pl(r;c_i))\to \{\text{families parameterized by $B$}\}
\]
defined by $g\mapsto (\id_X\times g)^*\cF_{un}$ is an equivalence of groupoids.

We define an equivalence relation among families of coherent sheaves parameterized by a scheme $B$ by saying that $\cF$ is equivalent to $\cG$ if and only if there is a line bundle $L$ on $B$ such that $\cG$ is isomorphic to $\cF\otimes p_B^*L$ where $p_B:X\times B\to B$ is the natural projection. Notice that this implies that for every point $b$ in $B$, the fibers $\cF_b$ and $\cG_b$ are isomorphic, and that isomorphic families are equivalent but not, in general, vice versa.

We say that  a family $\cF$ parameterized by $Spl(r;c_i)$ is universal if, for every scheme $B$, the natural map \[
Mor(B,Spl(r;c_i))\to \{\text{equivalence classes of families parameterized by $B$}\}
\]
defined by $g\mapsto (\id_X\times g)^*\cF$ is a bijection.

If a universal family parameterized by $Spl(r;c_i)$ exists, then it is unique up to tensoring with the pullback of a line bundle on $Spl(r;c_i)$. However, in general, it does not exist.

In this paper, we will use the universal family $\cF_{un}$ on $\cS pl(r;c_i)$ to define our moduli of syzygy bundles. Alternatively, we could work \'etale locally on $Spl(r;c_i)$ (because \'etale locally a universal family exists) and then use \'etale descent, but we find this method more cumbersome (and it is in any case equivalent).

\subsection{Open and locally closed conditions}

In this subsection, we will define open and locally closed  substacks of $\SSpl(r;c_i)$ and therefore open and locally closed algebraic subspaces of $\Spl(r;c_i)$ that we will be using repeatedly in the rest of the paper.

\begin{notation}
    In this subsection, we will let $\cF\in \Coh(X\times B)$ be a family of simple sheaves on $X$ of rank $r$ and Chern classes $c_i$ parameterized by a scheme $B$; let $u:B_1\to B$ be a morphism and $\cF_1=(\id _X\times u)^*\cF$ the pullback family parameterized by $B_1$. Let $p_B:X\times B \to B$, $p_{B_1}:X\times B_1\to B_1$  and $p_X:X\times B\to X$ (and also $X\times B_1\to X$) be the projections.
\end{notation}

We note that if $\cF$ is locally free, then $\cF^*$ is also a locally free family parameterized by $B$, and its pullback to $B_1$ is naturally isomorphic to $\cF_1^*$.

\begin{remark}\label{openconditions}
    Each of the following conditions on points $y\in B$  defines an open subset $U$ in $B$:\begin{enumerate}
        \item the sheaf $\cF_y$ is locally free;
        \item the group $H^i(X,\cF_y)$ vanishes, for a fixed $i\in \{0,\ldots,n\}$.
    \end{enumerate} 
\end{remark}

\begin{corollary}
The inverse image of $U$ in $B_1$ is exactly the set of points $y_1\in B_1$ where $\cF_1$ satisfies the corresponding condition. Thus each condition defines an open substack in $\SSpl(r;c_i)$, and  an open algebraic subspace in $\Spl(r;c_i)$.
\end{corollary}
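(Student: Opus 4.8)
The plan is to derive the corollary formally from three things already in place: the openness statement of Remark~\ref{openconditions}, the characterization of open substacks in Remark~\ref{open_in_moduli}, and the gerbe dictionary of Remark~\ref{gerbe}(3). The only point that still needs an argument is that the loci considered in Remark~\ref{openconditions} are compatible with base change; this is exactly what the first sentence of the corollary asserts, and it is essentially automatic.

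First I would verify the base-change statement. Let $y_1\in B_1$ and put $y=u(y_1)\in B$. Since $\cF_1=(\id_X\times u)^*\cF$ and restriction to a fibre commutes with pullback along $u$, there is a canonical isomorphism of sheaves on $X$
\[
(\cF_1)_{y_1}\;\cong\;\cF_{y},
\]
and, when $\cF$ is locally free, this is compatible with the identification of $\cF_1^{*}$ with the pullback of $\cF^{*}$ noted before Remark~\ref{openconditions}. Each condition of Remark~\ref{openconditions} --- local freeness, or the vanishing of $H^i(X,-)$ (applied to $\cF$ or to $\cF^{*}$) --- is a condition on the isomorphism class of a single sheaf on $X$; applied to $(\cF_1)_{y_1}\cong\cF_y$ it therefore holds at $y_1$ for $\cF_1$ precisely when it holds at $y$ for $\cF$. (No cohomology-and-base-change theorem is involved here: we are merely comparing the cohomology of two literally isomorphic sheaves on $X$.) Hence $u^{-1}(U)$ is exactly the locus in $B_1$ where $\cF_1$ satisfies the condition, which is the first assertion.

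Now I would assemble the consequences. Let $\cF$ range over all families over all schemes $B$, write $\cU$ for the set of isomorphism classes of simple sheaves of rank $r$ and Chern classes $c_i$ satisfying the fixed condition, and $B_{\cU}=\{b\in B:\cF_b\in\cU\}$. By Remark~\ref{openconditions} each $B_{\cU}$ is open in $B$, and by the previous paragraph these loci pull back correctly, $u^{-1}(B_{\cU})=(B_1)_{\cU}$; by the criterion of Remark~\ref{open_in_moduli} this is precisely what it means for $\cU$ to define an open substack of $\SSpl(r;c_i)$. Finally, since $\eps:\SSpl(r;c_i)\to\Spl(r;c_i)$ is a $\Gm$-gerbe, Remark~\ref{gerbe}(3) identifies this open substack with the preimage of a unique open algebraic subspace of $\Spl(r;c_i)$, which is the last claim. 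I do not anticipate a real obstacle: the argument is bookkeeping, and the only place one could slip is conflating the (nontrivial, semicontinuity-based) openness of Remark~\ref{openconditions} with the (elementary) base-change compatibility that the corollary genuinely adds.
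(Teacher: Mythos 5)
Your proof is correct and follows the same route as the paper: the paper's entire proof is the observation that the conditions are defined on points (i.e.\ on the isomorphism class of the fibre $\cF_y$), which is exactly your key step $(\cF_1)_{y_1}\cong\cF_{u(y_1)}$, and the assembly via Remark \ref{open_in_moduli} and Remark \ref{gerbe}(3) is the intended bookkeeping. You have simply written out in full what the paper leaves as ``immediate.''
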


\begin{proof}
    It immediately follows from the fact that these conditions are defined on points.
\end{proof}
\begin{lemma}
Assume that for every $y\in B$, the morphism $$\varphi_0(y):p_{B*}(\cF)\otimes k(y)\to H^0(X,\cF_y)$$ is surjective. Then $p_{B*}\cF$ is locally free, and the natural morphism $$\theta:u^*p_{B*}\cF \to p_{B_1*}\cF_1$$  is an isomorphism.
\end{lemma}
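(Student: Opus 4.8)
The plan is to recognize this as the case $i=0$ of the classical theorem on cohomology and base change, applied to the proper morphism $p_B\colon X\times B\to B$ (which is even projective, since $X$ is) and the $B$-flat coherent sheaf $\cF$. Since both conclusions are local on $B$, I would first reduce to $B=\Spec A$ with $A$ Noetherian, which is legitimate because our schemes are locally of finite type over $\KK$.

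The input I would quote is Grothendieck's theorem in the following form: for a proper morphism and a flat coherent sheaf over a Noetherian base, if the natural maps $\varphi^i(y)\colon R^ip_{B*}\cF\otimes k(y)\to H^i(X_y,\cF_y)$ and $\varphi^{i-1}(y)$ (defined the same way) are both surjective at a point $y$, then there is an open neighborhood $U\ni y$ on which $R^ip_{B*}\cF$ is locally free, each $\varphi^i(y')$ with $y'\in U$ is an isomorphism, and the formation of $R^ip_{B*}\cF$ commutes with arbitrary base change over $U$. Concretely, this rests on the existence of a bounded complex $K^\bullet=(K^0\to\cdots\to K^n)$ of finite free $A$-modules such that $R^ip_{B_1*}\cF_1\cong H^i(K^\bullet\otimes_AA')$ functorially in the $A$-algebra $A'$; then $p_{B*}\cF=\ker(d^0\colon K^0\to K^1)$, $p_{B_1*}\cF_1=\ker(d^0\otimes_AA')$, and $\theta$ is the canonical comparison map $\ker(d^0)\otimes_AA'\to\ker(d^0\otimes_AA')$. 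I would include this description as the self-contained route.

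I would then apply the theorem with $i=0$ at an arbitrary point $y\in B$. The hypothesis of the lemma says exactly that $\varphi^0(y)$ is surjective, and $\varphi^{-1}(y)$ is trivially surjective because $R^{-1}p_{B*}\cF=0=H^{-1}(X_y,\cF_y)$. Hence there is a neighborhood $U_y$ of $y$ on which $p_{B*}\cF$ is locally free and its formation commutes with base change. Since the $U_y$ cover $B$, and both local freeness and ``being an isomorphism'' are local conditions (the latter checked on $B_1$ after covering it by the $u^{-1}(U_y)$), it follows that $p_{B*}\cF$ is locally free on all of $B$ and that $\theta\colon u^*p_{B*}\cF\to p_{B_1*}\cF_1$ is an isomorphism for the given $u$. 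This gives both assertions.

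I do not expect a genuine obstacle here, as the argument is essentially a bookkeeping application of a standard theorem. The only two points that require attention are the reduction to the Noetherian affine situation, needed so that the base-change complex $K^\bullet$ exists, and invoking the base-change theorem in the strengthened form that delivers the base-change \emph{isomorphism} $\theta$ rather than merely the local freeness of $p_{B*}\cF$ — which is exactly why it is relevant that for $i=0$ the auxiliary condition on $\varphi^{-1}(y)$ holds for free.
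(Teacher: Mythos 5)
Your proposal is correct and follows essentially the same route as the paper: both rest on Grothendieck's cohomology-and-base-change theorem at $i=0$, using that $\varphi^{-1}(y)$ is automatically an isomorphism to get local freeness of $p_{B*}\cF$. The only cosmetic difference is that you extract the isomorphism $\theta$ directly from the "commutes with arbitrary base change" clause of the theorem, while the paper deduces it by observing that $\theta$ is a morphism of locally free sheaves that is an isomorphism on every fibre; these are interchangeable.
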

\begin{proof}
    That $p_{B*}\cF$ is locally free follows from cohomology and base change, since $\varphi_{-1}(y)$ is always an isomorphism.
    Therefore, $\theta$ is a homomorphism between locally free sheaves that is an isomorphism on every fibre and it follows that it is an isomorphism.
\end{proof}

\begin{proposition}
    Assume that $\cF$ is locally free. The following conditions  are equivalent:
    \begin{enumerate}
        \item $p_{B*}\cF$ is locally free of rank $w$, and for every $y\in B$ the morphism $\varphi_0(y)$ is surjective;
        \item $R^np_{B*}(\cF^*\otimes p_X^*\omega_X)$ is locally free of rank $w$.
    \end{enumerate}
    Moreover, if they are true for $\cF$ over $B$, they are also true for $\cF_1$ over $B_1$.
\end{proposition}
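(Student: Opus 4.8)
The strategy is to show that conditions (1) and (2) are each equivalent to a single, manifestly base-change-stable condition on fibres, using relative Grothendieck--Serre duality to pass between $\cF$ and $\cG:=\cF^*\otimes p_X^*\omega_X$, together with the cohomology-and-base-change theorem. Since the assertion is local on $B$, I would assume $B=\Spec R$ with $R$ Noetherian and take a Grothendieck complex $K^\bullet=[K^0\xrightarrow{d^0}K^1\to\cdots\to K^n]$ of finite free $\cO_B$-modules in degrees $0,\dots,n$ computing $Rp_{B*}\cF$ universally, so that $H^i(K^\bullet\otimes_{\cO_B}M)=R^ip_{B*}(\cF\otimes p_B^*M)$ for all $\cO_B$-modules $M$; for $M=k(y)$ this gives $H^i(X,\cF_y)=\mathcal{H}^i(K^\bullet\otimes k(y))$, and $\varphi_i(y)$ is the canonical map $\mathcal{H}^i(K^\bullet)\otimes k(y)\to\mathcal{H}^i(K^\bullet\otimes k(y))$. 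Because $X$ is \emph{Gorenstein} projective of pure dimension $n$, its dualizing complex is $\omega_X[n]$ with $\omega_X$ a line bundle, so $p_B\colon X\times B\to B$ is flat, projective and Gorenstein of pure relative dimension $n$ with relative dualizing complex $p_X^*\omega_X[n]$, and relative duality supplies a base-change-compatible isomorphism $Rp_{B*}\cG\cong R\mathcal{H}om_{\cO_B}(Rp_{B*}\cF,\cO_B)[-n]$ in $D(\cO_B)$. As $\cF$ is locally free and $K^\bullet$ consists of finite free modules, the right-hand side is represented by $\widetilde K^\bullet:=(K^\bullet)^\vee[-n]$, i.e. $[(K^n)^\vee\to\cdots\to(K^1)^\vee\xrightarrow{(d^0)^\vee}(K^0)^\vee]$ in degrees $0,\dots,n$; applying $M=k(y)$ and Serre duality on $X$ gives $\mathcal{H}^i(\widetilde K^\bullet\otimes k(y))\cong H^i(X,\cG_y)$, in particular $h^n(\cG_y)=h^0(\cF_y)$ for every $y$. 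Write $\psi_i(y)\colon R^ip_{B*}\cG\otimes k(y)\to H^i(X,\cG_y)$ for the corresponding base-change maps.

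Next I would reduce both conditions to
\[
(\star)\qquad \text{for every }y\in B:\quad h^0(\cF_y)=w\ \text{ and }\ \varphi_0(y)\text{ is surjective.}
\]
For $(1)\Leftrightarrow(\star)$: by the preceding Lemma, surjectivity of all the $\varphi_0(y)$ already forces $p_{B*}\cF$ to be locally free, and then $\varphi_0(y)$ is an isomorphism for each $y$ (cohomology and base change), so the rank of $p_{B*}\cF$ at $y$ equals $h^0(\cF_y)$. For $(2)\Leftrightarrow(\star)$: the top direct image $R^np_{B*}$ is right exact (fibres have dimension $n$), hence commutes with arbitrary base change, so $\psi_n(y)$ is always an isomorphism; by cohomology and base change $R^np_{B*}\cG$ is then locally free near $y$ iff $\psi_{n-1}(y)$ is surjective, and when locally free its rank at $y$ is $h^n(\cG_y)=h^0(\cF_y)$. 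So (2) is equivalent to ``$\psi_{n-1}(y)$ surjective and $h^0(\cF_y)=w$ for all $y$''.

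It then remains to prove, for each fixed $y$, that $\varphi_0(y)$ is surjective iff $\psi_{n-1}(y)$ is surjective, which I would do with the complex $K^\bullet$. Set $a:=d^0$ and localise at $y$. Since $R^0p_{B*}\cF=\ker a$ and $H^0(X,\cF_y)=\ker(a\otimes k(y))$, the map $\varphi_0(y)$ is the canonical $(\ker a)\otimes k(y)\to\ker(a\otimes k(y))$; tensoring the exact sequences $0\to\ker a\to K^0\to\mathrm{im}\,a\to0$ and $0\to\mathrm{im}\,a\to K^1\to\coker a\to0$ with $k(y)$ and chasing identifies its cokernel with $\mathrm{Tor}_1^{\cO_{B,y}}(\coker a,k(y))$, so $\varphi_0(y)$ is surjective iff $\coker a$ is free at $y$. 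On the other hand $R^np_{B*}\cG=\mathcal{H}^n(\widetilde K^\bullet)=\coker\!\bigl(a^\vee\colon(K^1)^\vee\to(K^0)^\vee\bigr)$, whence (see the previous paragraph) $\psi_{n-1}(y)$ is surjective iff $\coker(a^\vee)$ is free at $y$. Finally, for a map $a$ of finite free modules over a local ring, $\coker a$ is free iff $0\to\mathrm{im}\,a\to K^1\to\coker a\to0$ splits, iff $\mathrm{im}\,a$ is a free direct summand of $K^1$; the surjection $K^0\twoheadrightarrow\mathrm{im}\,a$ onto this free module splits as well, so $a$ is a direct sum of an isomorphism and a zero map for suitable decompositions of $K^0$ and $K^1$ — a property obviously stable under dualisation, whence $\coker a$ is free iff $\coker(a^\vee)$ is free. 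This closes the chain $\varphi_0(y)$ surjective $\iff\coker a$ free $\iff\coker(a^\vee)$ free $\iff\psi_{n-1}(y)$ surjective, and combined with the rank bookkeeping above yields $(1)\iff(2)$. The ``moreover'' is then clear, since $(\star)$ is stable under base change along $u\colon B_1\to B$: by the preceding Lemma $u^*p_{B*}\cF\to p_{B_1*}\cF_1$ is an isomorphism and base-change maps are compatible with composition, so $\varphi_0(y_1)$ for $\cF_1$ is identified with $\varphi_0(u(y_1))$ for $\cF$, while $h^0$ is constant along the fibres of $X\times B_1\to X\times B$.

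The main obstacle is getting the relative duality set-up of the first paragraph precisely right: one needs $X$ to be Gorenstein, not merely Cohen--Macaulay, so that $\omega_X$ is an honest line bundle and $\omega_X[n]$ is the dualizing complex; that $X\times B\to B$ inherits $p_X^*\omega_X[n]$ as its relative dualizing complex; and that the duality isomorphism is compatible with base change to residue fields — this is exactly what permits the identification $R^np_{B*}(\cF^*\otimes p_X^*\omega_X)\cong\coker\bigl((d^0)^\vee\bigr)$ and hence the translation of (2) into a statement about $\cF$. The remaining ingredients are short but all genuinely used: neither (1) nor (2) is equivalent to the naive ``$p_{B*}\cF$, resp. $R^np_{B*}\cG$, is locally free'' — the additional content is the surjectivity of $\varphi_0(y)$, which is precisely what the chain of equivalences through $\coker d^0$ encapsulates.
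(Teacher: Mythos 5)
Your proof is correct and takes essentially the same approach as the paper, whose entire proof of this proposition is the single sentence ``This follows from Grothendieck--Serre duality''; your argument is a complete and accurate elaboration of exactly that duality step, combined with the standard cohomology-and-base-change bookkeeping via the Grothendieck complex (in particular the reduction of both conditions to the freeness of $\coker d^0$, resp.\ $\coker (d^0)^\vee$, at each point). Nothing to fix.
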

\begin{proof}
    This follows from Grothendieck-Serre duality.
\end{proof}

\begin{remark}
    The natural morphism $u^*R^np_{B*}\cF\to R^np_{{B_1}*}\cF_1$ is always an isomorphism, because $H^{n+1}(X,\cF_y)=0$ for every $y\in B$. Hence it defines a coherent sheaf on the moduli stack $\SSpl(r;c_i')$. The same argument shows that $R^n(\cF^*\otimes p_X^*\omega_X)$ induces a coherent sheaf  over the open substack in $\SSpl(r;c_i)$  where $\cF$ is locally free.
\end{remark}

\vskip 2mm
\subsection{Infinitesimal criterion for smoothness} In this short subsection we recall  a useful {\em infinitesimal criterion for smoothness} that we will use in the proof of our main results.
We will use notation from Chapter 2 of \cite{Se}, to which we refer for details.

\begin{proposition} \label{criterion_smoothness}
  Let $f:X\to Y$ be a morphism of algebraic spaces (or algebraic stacks) and $p\in X$ a point. If   the morphism $f$ induces a surjective map $df: T_pX\to T_{f(p)} Y$ on tangent spaces and an injective map $o(f)$ on obstruction spaces, then $f$ is smooth at $p$.  
\end{proposition}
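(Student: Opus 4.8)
The plan is to prove the infinitesimal criterion for smoothness by combining the standard deformation-theoretic characterization of smoothness with the interpretation of tangent and obstruction spaces.

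First I would reduce to checking the formal lifting property along small extensions. Recall from Chapter 2 of \cite{Se} that a morphism $f:X\to Y$ of algebraic spaces (or algebraic stacks) locally of finite type over $\KK$ is smooth at $p\in X$ if and only if it is formally smooth there, i.e., for every small extension of local Artinian $\KK$-algebras $0\to I\to A'\to A\to 0$ (with $I$ annihilated by the maximal ideal, so $I$ is a finite-dimensional $\KK$-vector space), every commutative square consisting of a map $\Spec A\to X$ hitting $p$, a map $\Spec A'\to Y$ compatible via $f$, admits a lift $\Spec A'\to X$. So fix such a small extension and such a square.

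Next I would invoke the obstruction theory. The deformation theory of $f$ at $p$ provides: (i) an obstruction class $\mathrm{ob}\in o(f)\otimes_\KK I$ whose vanishing is necessary and sufficient for a lift $\Spec A'\to X$ to exist; and (ii) if the obstruction vanishes, the set of such lifts is a torsor under $T_pX\otimes_\KK I$. Functoriality of the obstruction theory for $f$ relative to the (trivially smooth, hence unobstructed) structure map $Y\to \Spec\KK$ means that the obstruction to lifting the composite $\Spec A\to X\to Y$ is the image of $\mathrm{ob}$ under $o(f)\to o(Y)$, i.e., under the map $o(f)\otimes I\to o(Y)\otimes I$ induced by the $\KK$-linear map $o(f)\to o(Y)$ (which in \cite{Se}'s setup is part of the long exact sequence of the pair, or simply the natural map on obstruction spaces). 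By hypothesis the map on $Y$ is the zero target in the sense that the composite $\Spec A\to X\to Y$ already extends to $\Spec A'\to Y$ — that extension is given as part of the square — so its obstruction is zero. Since $o(f)\to o(Y)$ is injective and tensoring with the $\KK$-vector space $I$ preserves injectivity, $\mathrm{ob}\otimes I$ maps to zero injectively, hence $\mathrm{ob}=0$. Therefore a lift $\Spec A'\to X$ exists as a map; it then remains to adjust it so that it is compatible with the prescribed map to $Y$.

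The main obstacle, and the place requiring the surjectivity hypothesis, is precisely this last compatibility step: an arbitrary lift $g':\Spec A'\to X$ need not make the square commute, i.e., $f\circ g'$ and the given map $\Spec A'\to Y$ may differ. Their difference is an element of $T_{f(p)}Y\otimes_\KK I$ (both restrict to the same map on $\Spec A$, so they differ by a tangent vector to $Y$). I would then use that the set of lifts $g'$ (over the fixed $\Spec A\to X$) is a torsor under $T_pX\otimes_\KK I$, and that changing $g'$ by $v\in T_pX\otimes I$ changes $f\circ g'$ by $df(v)\in T_{f(p)}Y\otimes I$. Since $df:T_pX\to T_{f(p)}Y$ is surjective, so is $df\otimes \id_I$, and hence we can choose $v$ so that the modified lift $g'+v$ satisfies $f\circ(g'+v)$ equal to the prescribed map to $Y$. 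This produces the required lift in the square, establishing formal smoothness and therefore smoothness of $f$ at $p$. Throughout, the only subtlety is making sure the deformation and obstruction theory of a morphism is set up functorially enough (as in \cite{Se}) so that the comparison maps on $T$ and on $o$ are exactly $df$ and $o(f)$; granting that, the argument is the standard torsor-plus-obstruction bookkeeping.
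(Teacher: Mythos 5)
Your argument is correct in substance, but it takes a genuinely different route from the paper. You run the classical torsor-plus-obstruction argument directly: given a small extension $0\to I\to A'\to A\to 0$, the obstruction to extending $\Spec A\to X$ lies in $O_p X\otimes_\KK I$, its image under $o(f)\otimes\id_I$ is the obstruction for $Y$, which vanishes because the extension to $Y$ is given; injectivity of $o(f)$ kills the obstruction, and surjectivity of $df$ lets you correct an arbitrary lift by an element of $T_pX\otimes I$ so that the square commutes. The paper instead does not re-prove this lifting argument at all: it quotes \cite[Theorem 2.1.5]{Se} for the case of affine schemes and devotes its proof to reducing the general case to that one, by observing that smoothness, the surjectivity of the differential, and the obstruction space are all insensitive to smooth base change on $Y$ and to precomposition with a smooth atlas $S\to X$. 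The trade-off is clear: your proof is more self-contained and avoids the atlas bookkeeping, but it silently assumes that the full deformation package for algebraic spaces and stacks (well-defined obstruction classes, the torsor structure on isomorphism classes of lifts, and their functoriality under $f$, so that the comparison maps really are $df$ and $o(f)$) has been set up in that generality — which is precisely the foundational point the paper's reduction is designed to sidestep, since \cite{Se} only treats functors of Artin rings coming from the affine/local situation. Two small points to tidy: the obstruction class lives in $O_pX\otimes_\KK I$, not in ``$o(f)\otimes I$'' ($o(f)$ is a map, not a space); and for stacks the lifts form a groupoid, so the torsor statement should be read on isomorphism classes, which suffices for existence.
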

\begin{proof}
    This is a classical result, so we include an outline of proof for the reader's convenience. If $X$ and $Y$ are both affine this is  \cite[Theorem 2.1.5]{Se}.
    
    For the general case, since everything is locally of finite type over $\KK$, smoothness is equivalent to formal smoothness, thus a lifting property for small extensions. In particular, smoothness can be checked locally in the smooth topology on $Y$, i.e., if $\tilde f:\tilde X\to \tilde Y$ is obtained from $f$ by a base change $\tilde Y\to Y$, and $\tilde p\in \tilde X$ maps to $p$, then $\tilde f$ is smooth in $\tilde p$ if and only if $f$ is smooth in $p$. On the other hand, the obstruction spaces do not change under smooth base change, and the tangent spaces change in compatible ways so that $df$ is surjective at $p$ if and only if $d\tilde f$ is surjective at $\tilde p$.
    
    We can therefore assume that $Y$ is an affine scheme, since by definition $Y$ has a smooth atlas of affine schemes. For the same reason, we can find an affine scheme $S$, a point $s\in S$ and a smooth morphism $\pi:S\to X$ such that $\pi(s)=p$.
    For any small extension, a lifting exists from $Y$ to $X$ if and only if it exists from $Y$ to $S$. Again the obstruction space for $X$ at $p$ is also an obstruction space for $S$ at $s$; the differential $d(f\circ \pi):T_sS\to T_{f(p)}Y$ factors via $T_pX$, which means that $d(f\circ \pi)$ is surjective at $s$ if and only if $df$ is surjective at $p$, since $d\pi$ is surjective at every point by the smoothness of $\pi$.
\end{proof}


\subsection{Infinitesimal deformations of sheaves and their homomorphisms}

For the sake of completeness we start this subsection describing the tangent spaces and the obstruction spaces for $\SSpl(r;c_i)$ and $\Spl(r;c_i)$.

\begin{proposition}\label{tgt-obs} Let $F$ be a simple sheaf on $X$, of rank $r$ and Chern classes $c_i$. It holds:
\begin{itemize}
\item[(1)] $T_{[F]}\SSpl(r;c_i)\cong T_{[F]}\Spl(r;c_i)\cong Ext^1(F,F)$. In particular, if $F$ is a vector bundle then $T_{[F]}\Spl(r;c_i)\cong H^1(F^*\otimes F)$.
\item[(2)] An obstruction space for $\SSpl(r;c_i)$ (resp. $\Spl(r;c_i)$) at $[F]$ is $Ext^2(F,F).$  If $F$ is a vector bundle then $Ext^2(F,F)\cong H^2(F^*\otimes F)$.
\end{itemize}
\end{proposition}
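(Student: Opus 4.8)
The statement is the standard deformation theory of coherent sheaves, and I would prove it by reducing everything to the deformation theory of the identity automorphism and invoking the classical results of Illusie/Siegel on the cotangent complex, or more elementarily the explicit cocycle description. The plan is as follows. First I would recall that a first-order deformation of $F$ over $\KK[\eps]/(\eps^2)$ is, by definition, a sheaf $\tilde F$ on $X\times\Spec\KK[\eps]$, flat over $\KK[\eps]$, together with an isomorphism $\tilde F\otimes\KK\cong F$. The set of isomorphism classes of such deformations is classically in natural bijection with $\Ext^1_{\cO_X}(F,F)$, and the obstruction to extending a deformation over a small extension lies in $\Ext^2_{\cO_X}(F,F)$; this is for instance \cite[Chapter 2]{Se} in the locally free case and the general statement follows from the local-to-global $\Ext$ spectral sequence together with the fact that $X$ is projective. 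So $T_{[F]}\SSpl(r;c_i)\cong\Ext^1(F,F)$ and $\Ext^2(F,F)$ is an obstruction space for $\SSpl(r;c_i)$, which gives part of (1) and (2) for the stack.

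Next I would pass from the stack to the coarse space. By the previous subsection, $\eps:\SSpl(r;c_i)\to\Spl(r;c_i)$ is a $\Gm$-gerbe, hence in particular smooth with zero-dimensional fibres; more precisely the $\Gm$-gerbe structure means that, infinitesimally, it is a $B\Gm$-bundle. Since $H^1(B\Gm,\KK)$ contributes only to the automorphisms and not to the tangent directions, the induced map $T_{[F]}\SSpl(r;c_i)\to T_{[F]}\Spl(r;c_i)$ is an isomorphism; equivalently, the fact from the cited \cite[Theorem 5.1.5]{ACV} that every $\KK$-point of $\Spl(r;c_i)$ has trivial automorphism group together with simpleness of $F$ (so that $\Aut(F)=\Gm$ exactly matches the gerbe band) shows the deformation functor of $F$ as a sheaf and as a point of the coarse space have the same tangent and obstruction spaces. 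This yields $T_{[F]}\SSpl(r;c_i)\cong T_{[F]}\Spl(r;c_i)\cong\Ext^1(F,F)$ and that $\Ext^2(F,F)$ is an obstruction space for $\Spl(r;c_i)$ as well.

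Finally, for the locally free case I would simply note that if $F$ is a vector bundle then $\mathcal{E}xt^j_{\cO_X}(F,F)=0$ for $j>0$ and $\mathcal{H}om_{\cO_X}(F,F)\cong F^*\otimes F$, so the local-to-global spectral sequence degenerates and gives $\Ext^i(F,F)\cong H^i(X,F^*\otimes F)$ for all $i$, in particular for $i=1,2$. This finishes both (1) and (2).

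The main obstacle, such as it is, is purely expository rather than mathematical: making precise the comparison between the deformation theory of $F$ \emph{as a point of the stack} and \emph{as a point of the coarse algebraic space} without circularity, given that we have defined $\Spl(r;c_i)$ via the rigidification $\mathcal X^{\Gm}$ of \cite{ACV}. I expect to handle this by citing directly the construction in \cite{ACV}: rigidification removes the $\Gm$ in the automorphism groups but leaves the deformation and obstruction theory untouched, because $\Gm$ is smooth (so contributes no obstructions) and linearly reductive (so $H^{>0}$ of its classifying stack with coefficients in a vector space vanishes). No serious computation is needed; the only care required is to state the functorial bijections cleanly and to be explicit that everything is over $\KK$.
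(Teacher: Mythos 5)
Your proposal is correct and follows essentially the same route as the paper: one identifies the tangent and obstruction spaces with $\Ext^1(F,F)$ and $\Ext^2(F,F)$ by the standard deformation theory of coherent sheaves (the paper simply cites \cite[Example 6.3.7(2)]{FG} for the coarse space $\Spl(r;c_i)$) and then transfers between $\SSpl(r;c_i)$ and $\Spl(r;c_i)$ using that the structure morphism is a smooth $\Gm$-gerbe, which gives an isomorphism on obstructions (smoothness) and on tangent spaces (gerbe). The only cosmetic difference is the direction of the transfer --- you go from the stack to the coarse space while the paper goes the other way --- and both treat the locally free case via the degeneration of the local-to-global $\Ext$ spectral sequence.
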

\begin{proof} For the case of $\Spl(r;c_i)$ see, for instance,   \cite[Example 6.3.7(2)]{FG}. On the other hand, the structure morphism from $\SSpl(r;c_i)$ to $\Spl(r;c_i)$ is smooth, hence induces an isomorphism on obstructions, and a $B{\mathbb G}_{\text m}$ gerbe, thus it induces an isomorphism on tangent spaces.
\end{proof}

\begin{remark} If $Ext^2(F,F)=0$, then the algebraic space
$Spl(X)$ and the algebraic stack $\cS pl(X)$ are smooth but not, in general, conversely. For instance, if $X$ is a smooth projective complex K3 surface, then for any  simple sheaf $F$ on $X$ we have $Ext^2(F,F)=Hom(F,F)=\CC$ but the moduli space $\Spl(r;c_i)$ of simple sheaves is smooth (see, for instance, \cite[Theorem 0.1]{Mu}).
\end{remark}

\begin{proposition}\label{tgt-quot} Let $Y$ be a projective scheme, $E\in Coh(Y)$
and $\eta=[E\twoheadrightarrow F]\in Quot_Y^p(E)$. Then,
$$ T_{[\eta ]}Quot_Y^p(E)\cong Hom(S,F)$$
where $S=ker(E\to F)$ and an obstruction space is $Ext^1(S,F)$.
\end{proposition}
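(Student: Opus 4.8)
The plan is to identify $\Quot^p_Y(E)$ locally with the functor of quotients of $E$ with fixed Hilbert polynomial $p$, and to run the standard deformation-theoretic computation of its tangent and obstruction spaces relative to infinitesimal thickenings of the base point.

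\emph{Step 1: flat deformations of the quotient.} First I would recall that a tangent vector at $[\eta]=[q:E\twoheadrightarrow F]$ is a flat family over $\Spec\KK[\epsilon]/(\epsilon^2)$ restricting to $q$ over the closed point; concretely it is a surjection $q_\epsilon:E\otimes\KK[\epsilon]\twoheadrightarrow F_\epsilon$ with $F_\epsilon$ flat over $\KK[\epsilon]$ and $q_\epsilon\otimes\KK=q$. Flatness over the dual numbers forces a short exact sequence $0\to F\to F_\epsilon\to F\to 0$, and a diagram chase (using that $E\otimes\KK[\epsilon]$ is $\KK[\epsilon]$-free, hence flat) shows that the subsheaf $S_\epsilon=\ker q_\epsilon\subset E\otimes\KK[\epsilon]$ is itself flat over $\KK[\epsilon]$ and sits in $0\to S\to S_\epsilon\to S\to 0$. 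The data of $S_\epsilon$ as a $\KK[\epsilon]$-flat subsheaf of $E\oplus\epsilon E$ reducing to $S$ is then classically equivalent to giving an $\cO_Y$-homomorphism $\varphi:S\to E/S=F$: one writes $S_\epsilon=\{(s,\epsilon e):\, e\bmod S=\varphi(s)\}$. Conversely every such $\varphi$ yields a flat deformation. Hence $T_{[\eta]}\Quot^p_Y(E)\cong\Hom_{\cO_Y}(S,F)$, which is the first assertion.

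\emph{Step 2: the obstruction.} Next I would treat obstructions. Given a small extension $0\to J\to \tilde A\to A\to 0$ of local Artin $\KK$-algebras with $J\cong\KK$, and a deformation $[q_A:E_A\twoheadrightarrow F_A]$ over $A$ with kernel $S_A$, one tries to lift $S_A$ to a $\tilde A$-flat subsheaf $S_{\tilde A}\subset E_{\tilde A}=E\otimes\tilde A$. Working locally on $Y$ such a lift always exists, and any two local lifts differ by an element of $\Hom(S,F)$ (this is Step 1 applied with parameter $J$); the failure of local lifts to glue produces a Čech $1$-cocycle, giving a well-defined class in $H^1(Y,\mathcal{H}om(S,F))$. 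Since $\mathcal{E}xt$-sheaves could in principle contribute, I would instead phrase the lifting problem via the local-to-global $\Ext$ spectral sequence and observe that because $E_{\tilde A}$ is free over $\tilde A$ the relevant obstruction lives naturally in the global $\Ext^1_{\cO_Y}(S,F)$, vanishing precisely when a lift exists. Thus $\Ext^1(S,F)$ is an obstruction space. (It need not be minimal, but the statement only asserts it \emph{is} an obstruction space, which is all we need.)

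\emph{Main obstacle.} The genuinely delicate point is Step 2: one must be careful that the obstruction lands in the \emph{global} $\Ext^1(S,F)$ rather than only in $H^1(\mathcal{H}om(S,F))$, i.e.\ that the potential $H^0(\mathcal{E}xt^1(S,F))$ contribution is absorbed correctly. The clean way around this is to avoid sheaf-theoretic gluing entirely and instead use the cotangent-complex / functorial description of the Quot functor, or equivalently Illusie's theory, under which the deformations and obstructions of a quotient $[E\twoheadrightarrow F]$ with $E$ fixed are governed by $\Ext^i(S,F)$ for $i=0,1$ directly. Since this is a standard fact (it is, e.g., \cite[\S~2]{Se} in the scheme case, and the projectivity of $Y$ ensures all the $\Ext$ groups are finite-dimensional), I would simply cite it; the two steps above are included to indicate the mechanism. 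I expect no further difficulty, as everything reduces to linear algebra over Artin rings once the identification of tangent data with $\Hom(S,F)$ is in place.
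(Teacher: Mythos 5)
Your proposal is correct and matches the paper's approach: the paper proves this proposition simply by citing the standard result (\cite[Theorem 6.4.9]{FG}), and the argument you outline -- identifying tangent vectors with $\KK[\epsilon]$-flat subsheaves of $E\otimes\KK[\epsilon]$ lifting $S$, hence with $\operatorname{Hom}(S,F)$, and placing the obstruction to lifting over a small extension in the global $\Ext^1(S,F)$ -- is exactly the content of that cited theorem. Nothing further is needed.
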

\begin{proof} See, for instance,   \cite[Theorem 6.4.9]{FG}.
\end{proof}

In particular, if $Y=Spec \, \KK $, $V$ is a $\KK $-vector space of dimension $v$, $w\le v$ a positive integer and $W\subset V$ is a subspace of dimension $w$, then
\begin{equation}\label{tg-grass}
T_{[W]}Gr(w,V)\cong Hom_{\KK-sp}(W,V/W)
\end{equation}
and $Gr(w,V)$ is smooth of dimension $w(v-w)$.

\begin{corollary}\label{def-triple}
    Let $Y$ be a projective scheme, and $\phi:E\to F$ a surjection of vector bundles on $Y$. Let $D$ be the deformation functor of the triple $(E,F,\phi)$. Then the tangent space to automorphisms, tangent space, and an obstruction space for $D$ can be computed as $T^iD=\HH^i(A)$ for $i=0,1, 2$ where $A\in C^{0,1}(\Coh(Y))$ is the complex defined by $A^0:=E^*\otimes E\oplus F^*\otimes F$, $A^1:=E^*\otimes F$, and $d(\alpha,\beta):=\phi\circ \alpha-\beta\circ\phi$.
\end{corollary}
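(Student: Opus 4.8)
The plan is to identify the deformation functor $D$ of the triple $(E,F,\phi)$ with the hypercohomology of a suitable two-term complex of sheaves, using the standard Čech-style local description of deformations. First I would set up the complex $A^\bullet\in C^{[0,1]}(\Coh(Y))$ with $A^0=E^*\otimes E\oplus F^*\otimes F=\mathcal End(E)\oplus\mathcal End(F)$, $A^1=E^*\otimes F=\mathcal Hom(E,F)$, and differential $d(\alpha,\beta)=\phi\circ\alpha-\beta\circ\phi$; note that $\phi$ being a surjection of vector bundles means $A^\bullet$ is a genuine complex of locally free sheaves. The claim to prove is then that the tangent space to the automorphisms of the triple is $\mathbb H^0(A^\bullet)$, the tangent space to $D$ is $\mathbb H^1(A^\bullet)$, and $\mathbb H^2(A^\bullet)$ is an obstruction space.

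The key steps are as follows. (i) Automorphisms: an automorphism of the triple over $\KK[\epsilon]/\epsilon^2$ that reduces to the identity is a pair $(\id_E+\epsilon a,\ \id_F+\epsilon b)$ with $a\in H^0(\mathcal End E)$, $b\in H^0(\mathcal End F)$, compatible with $\phi$, i.e.\ $(\id_F+\epsilon b)\circ\phi=\phi\circ(\id_E+\epsilon a)$, which is exactly $\phi\circ a-b\circ\phi=0$, i.e.\ $d(a,b)=0$ in $H^0(A^1)$; so the infinitesimal automorphisms are $\ker\big(H^0(A^0)\xrightarrow{d}H^0(A^1)\big)$, and since $A^\bullet$ starts in degree $0$ this is $\mathbb H^0(A^\bullet)$. (ii) Tangent space: a first-order deformation of the triple consists of locally free deformations $\mathcal E,\mathcal F$ of $E,F$ together with a lift $\tilde\phi$ of $\phi$; working on an affine cover $\{U_i\}$ where everything is trivial, a deformation of $E$ is given by transition cocycle corrections $a_{ij}\in\mathcal End(E)(U_{ij})$, likewise $b_{ij}$ for $F$, and the lift $\tilde\phi$ is an element $\psi_i\in\mathcal Hom(E,F)(U_i)$ on each patch; the glueing/compatibility conditions say precisely that $(a_{ij},b_{ij})$ is a Čech $1$-cocycle for $A^0$, $(\psi_i)$ is a $0$-cochain for $A^1$, and $d(a,b)=\delta\psi$ — i.e.\ the data is a $1$-cocycle for the total complex $\mathrm{Tot}(\check C^\bullet(A^\bullet))$. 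Modding out by isomorphisms of deformations (which change $(a_{ij})$ by coboundaries and $(\psi_i)$ compatibly) gives $\mathbb H^1(A^\bullet)$. (iii) Obstructions: given a small extension $0\to I\to \tilde B\to B\to 0$ and a deformation over $B$, lifting each of $\mathcal E$, $\mathcal F$ and $\tilde\phi$ separately produces the usual cocycle obstruction in $\check C^2(A^0)\oplus\check C^1(A^1)$; checking that the total differential kills it (using that the individual obstructions for $E$, $F$, $\phi$ and their compatibility are controlled by $d$) shows it defines a class in $\mathbb H^2(A^\bullet)\otimes I$, vanishing iff a lift exists, and functoriality of the construction gives that $\mathbb H^2(A^\bullet)$ is an obstruction space in the sense of \cite{Se}.

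The main obstacle is the bookkeeping in step (ii)–(iii): one must be careful that an "isomorphism of deformations of the triple" is allowed to move $\tilde\phi$ (not only $\mathcal E,\mathcal F$), so that the correct equivalence is genuinely that of the total complex and not a naive combination, and one must check the glueing cocycle conditions match the total differential with the right signs. A clean way to avoid much of this is to observe that $D$ sits in an exact sequence of deformation functors relating it to $\mathrm{Def}_E\times\mathrm{Def}_F$ and to $\mathrm{Def}_\phi$ (deformations of the map with $E,F$ fixed, governed by $\mathcal Hom(E,F)$), and that this is exactly the long exact sequence of the distinguished triangle $A^1[-1]\to A^\bullet\to A^0\to A^1$; comparing the two long exact sequences and invoking the five-lemma on tangent and obstruction spaces then reduces everything to the already-known deformation theory of sheaves (Proposition \ref{tgt-obs}) and of the $\Quot$-type functor (Proposition \ref{tgt-quot}). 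Either route works; I would present the hypercohomology description directly and only sketch the cocycle verification, citing \cite{Se} for the general machinery of tangent-obstruction theories attached to complexes.
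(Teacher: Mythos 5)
Your proposal is correct, but it takes a different route from the paper. The paper does not do any \v{C}ech bookkeeping: it uses the exact sequence of complexes $0\to A'\to A\to E^*\otimes E\to 0$, where $A'=[F^*\otimes F\to E^*\otimes F]$ is the subcomplex obtained by dropping the $\End(E)$ summand in degree $0$. The quotient term identifies the map $D\to D_E$ (forget everything but $E$), and the key observation is that $A'$ is quasi-isomorphic to $S^*\otimes F[-1]$ via $0\to F^*\otimes F\to E^*\otimes F\to S^*\otimes F\to 0$, so $\HH^i(A')\cong H^{i-1}(S^*\otimes F)$ --- exactly the tangent/obstruction theory of the relative Quot functor from Proposition \ref{tgt-quot}. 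The corollary then follows by comparing the hypercohomology long exact sequence of $0\to A'\to A\to E^*\otimes E\to 0$ with the deformation-theoretic exact sequence of the fibration $D\to D_E$, using Propositions \ref{tgt-obs} and \ref{tgt-quot}. Your primary route (direct identification of cocycles for the total complex of $\check C^\bullet(A^\bullet)$ with deformation data) is the more self-contained and standard textbook argument; it proves the identification $T^iD=\HH^i(A)$ directly rather than by matching exact sequences, at the cost of the sign and equivalence bookkeeping you flag. Your fallback via the naive filtration $A^1[-1]\to A\to A^0$ is the same idea as the paper's but with a different subcomplex: it relates $D$ to $D_E\times D_F$ with fiber governed by $\mathcal{H}om(E,F)$, whereas the paper relates $D$ to $D_E$ alone with fiber the relative Quot; note that for your version the five-lemma comparison still requires you to first \emph{construct} the comparison maps $T^iD\to\HH^i(A)$ and check they commute with the connecting homomorphisms, which is essentially the cocycle computation again, so it does not fully bypass step (ii)--(iii). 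Both routes are sound.
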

\begin{proof}
This is well known, we include a sketch of the argument for completeness.
We have an exact sequence of complexes \[
0\to A' \to A \to E^*\otimes E\to 0
\]
where $A'$ is the subcomplex $[F^*\otimes F\to E^*\otimes F]$. We consider the forgetful morphism $D\to D_E$, where we only remember the bundle $E$. It induces a long exact sequence in hypercohomology. 

Let $S:=\ker \phi$;
the exact sequence \[0\to F^*\otimes F\to E^*\otimes F \to S^*\otimes F\to 0 \]
induces a morphism of complexes $A'\to S^*\otimes F[-1]$ which is an isomorphism on cohomology bundles. Hence it induces an isomorphism in hypercohomology;
thus $\HH^i(A')$ is naturally isomorphic to $H^{i-1}(S^*\otimes F)$. The fiber of $D\to D_E$ are deformations of $\phi$ and $F$ over a given deformation of $E$, which is the relative Quot scheme. Thus the result follows from Propositions \ref{tgt-obs} and \ref{tgt-quot}.
\end{proof}


\subsection{Definition of generalized syzygy bundles}

Let $F$ be a globally generated vector bundle of rank $r$ on $X$. 
Fix an integer $w\ge n+r$ and let $W\subset  H^0(X, F)$ be a subspace of dimension $w$. We have an induced evaluation map:
 $$eval_{W}: W\otimes \mathcal O_X \to F.$$
When $eval_W $ is surjective 
 we define the {\em generalized syzygy bundle} $S$ on $X$ as its kernel. Therefore, $ S$  fits inside a short exact sequence:
\begin{equation}
    \label{exact1}
e: \quad 0\to  S\to W\otimes \mathcal O_X \to  F\to 0, \text{ and }
\end{equation}
the rank and Chern classes of $S$ are given by:
$$\begin{array}{rcl} r':=rank(S)& = & \dim W- rank(F)= w-r, \\
c_1':=c_1(S) & = & -c_1(F) \text{ and}\\
\sum_{i}c_{i}(S)t^i& = & (\sum _{i }c_{i}(F)t ^i)^{-1}
\end{array}
$$
We will call  $S$ the {\em generalized syzygy bundle} associated to the couple $(F,W)$.

\begin{notation}
    In the rest of the paper we fix the rank $r$ and the Chern classes $c_i$ for $F$ and we write $r'$ and $c_i'$ for the rank and the Chern classes of the associated syzygy bundle.
\end{notation}
Notice that when $F=\cO _X(L)$ is a very ample line bundle on $X$ and $W=H^0(X,L)$ we recover the classical definition of syzygy bundle, namely, the {\em syzygy bundle} $M_L$ associated to $\cO _X(L)$ is by definition the kernel of the evaluation map  $eval_{L}: H^0{\mathcal O}_X(L)\otimes \mathcal O_X \to {\mathcal O}_X(L)$. Thus, $M_L$ sits in the exact sequence:
\[
0\to M_L\to H^0({\mathcal O}_X(L)\otimes \mathcal O_X \to {\mathcal O}_X(L)\to 0.
\]
Syzygy bundles and generalized syzygy bundles arise in a variety of geometric and algebraic problems and there has been great interest in trying to establish their stability with respect to a suitable ample line bundle as well as in describing their moduli spaces (see, for instance, \cite{BP}, \cite{B}, \cite{CL}, \cite{CMM}, \cite{ELM}, \cite{MM}, \cite{MS}, \cite{MS2} and \cite{MR}).

When $X$ is a smooth curve of genus $g\ge 1$, the situation is well understood thanks to work of  Butler \cite{Bu}, Beauville \cite{Be} and Ein-Lazarsfeld \cite{EL}. In particular, $M_L$ is stable provided $\deg (L)\ge 2g+1$.

In higher dimension the stability of $M_L $  is equivalent to the stability of the pullback $\varphi ^*_{|L|}(T_{\PP })$ of the tangent bundle of $\PP:=\PP (H^0(X,L)^*)$ where $\varphi_{|L|}:X\to \PP(H^0(X,L)^*)$ is the morphism associated to the complete linear system $|L|$.


\section{Definition of the morphism $\alpha$}
In  this section, we gather the basic results on generalized syzygy bundles  needed in the sequel and we explicitly construct the morphism $\alpha:\cG^0_U\to \Spl(r';c_i')$.
We start with an easy lemma.

\begin{lemma} \label{aux} Let $F $ be a simple vector bundle on a reduced connected projective scheme. If $F $ is generated by global sections and $H^0(F)=0$, then $H^0(F^*)=0$ unless $F=\cO_X$.
\end{lemma}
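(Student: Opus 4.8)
The plan is to prove the contrapositive: assuming $F$ is simple, globally generated and $F\not\cong\cO_X$, we show $H^0(X,F^*)=0$. Since $H^0(X,F^*)=\operatorname{Hom}_{\cO_X}(F,\cO_X)$, it is enough to rule out a nonzero morphism $\phi\colon F\to\cO_X$; so suppose for contradiction that one exists.

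The first step is to upgrade $\phi$ to a \emph{split} epimorphism, using global generation. Because $X$ is reduced, connected and projective over the algebraically closed field $\KK$, we have $H^0(X,\cO_X)=\KK$, and therefore $\operatorname{Hom}_{\cO_X}(H^0(F)\otimes\cO_X,\cO_X)\cong H^0(F)^{\vee}$. The composite $\phi\circ\eval\colon H^0(F)\otimes\cO_X\to\cO_X$ is nonzero (otherwise $\phi$ would vanish on the image of $\eval$, which is all of $F$, forcing $\phi=0$); hence the corresponding linear functional on $H^0(F)$ is nonzero, i.e.\ there is a section $s\in H^0(X,F)$ with $\phi\circ s=1\in H^0(X,\cO_X)=\KK$. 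Viewing $s$ as a morphism $\cO_X\to F$, this is precisely a splitting of $\phi$, so $F\cong\cO_X\oplus K$ with $K:=\ker\phi$ a vector bundle of rank $\rk F-1$.

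The second step uses simplicity. A direct sum decomposition $F\cong\cO_X\oplus K$ with both summands nonzero produces two $\KK$-linearly independent idempotent endomorphisms of $F$, namely the two projections; neither is a scalar multiple of $\id_F$, since each has both nonzero image and nonzero kernel. This contradicts $\End_{\cO_X}(F)=\KK\cdot\id_F$. Hence $K=0$, so $\rk F=1$ and $\phi\colon F\to\cO_X$ is an isomorphism, i.e.\ $F\cong\cO_X$, contrary to assumption. Therefore $H^0(X,F^*)=0$.

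The argument is essentially formal, and I do not expect a genuine obstacle. The one place calling for a little care is the first step: one must use that $\operatorname{Hom}_{\cO_X}(\cO_X,\cO_X)=\KK$ — which is exactly where reducedness and connectedness of $X$ over the algebraically closed base enter — and check that a section mapping to a nonzero scalar really splits $\phi$. I would also remark in passing that the hypothesis $H^0(F)=0$ is not used above; indeed a nonzero globally generated bundle automatically has $H^0\neq 0$.
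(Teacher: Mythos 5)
Your proof is correct, but it takes a genuinely different route from the paper's. The paper argues directly with the would-be nonzero sections: it picks $s\in H^0(F)$ and $t\in H^0(F^*)$ and considers $s\otimes t\in H^0(F\otimes F^*)=\End(F)$, an endomorphism of fibrewise rank at most one; simplicity forces it to be a scalar, which is impossible for a nonzero rank-one endomorphism when $\rk F\ge 2$, while in the line bundle case a nonzero scalar forces $s$ to be nowhere vanishing, i.e.\ $F\cong\cO_X$. (Global generation is what lets one choose $s$ so that $s\otimes t$ is actually nonzero at a point where $t\ne 0$ --- a point the paper leaves implicit.) You instead upgrade a nonzero $\phi\in\operatorname{Hom}(F,\cO_X)$ to a split surjection by precomposing with the evaluation map and using $H^0(\cO_X)=\KK$ to extract a section $s$ with $\phi\circ s=1$, and then kill the complementary summand by simplicity via idempotents. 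Your version is slightly more robust on a merely reduced connected (possibly reducible) $X$, since it never needs to intersect the nonvanishing loci of two sections, and it isolates cleanly where connectedness and reducedness enter (namely $H^0(\cO_X)=\KK$); the paper's version is shorter and stays entirely inside $\End(F)$. Your closing remark is also well taken: the hypothesis ``$H^0(F)=0$'' in the statement is evidently a typo (it is incompatible with global generation of a nonzero bundle and is used nowhere), and the paper's own proof in fact requires $H^0(F)\ne 0$ to pick the section $s$.
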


\begin{proof} Assume $H^0(X,F^*)\ne 0$. We take non trivial sections $s\in H^0(X,F)$ and $t\in H^0(X,F^*)$. Then $s\otimes t\in H^0(X,F\otimes F^*)$ is a nonzero endomorphism of $F$ which in each fibre has rank at most one.
If $\rk F\ge 2$ this implies that $s\otimes t$ cannot be a scalar. If $F$ is a line bundle and $s\otimes t$ is a (non zero) scalar, then $s$ can never vanish and thus $F$ is trivial.
\end{proof}

\begin{proposition}\label{open}  The locus in  $\cS pl(r;c_i)$ (rsp. $Spl(r;c_i)$) of points $[F]$ such that 
 $F$ is a vector bundle satisfying $H^1(F)=H^1(F^*)=0$ and $F$ is generated by global sections defines a (possible empty) open substack $\cU$ (resp. algebraic subspace $U$).
 
 The set-theoretic function $v$ associating to each point $[F] $ of this open locus the dimension of $H^0(F)$ is locally constant. Hence it is constant on every connected component of $U$ and $\cU$.
\end{proposition}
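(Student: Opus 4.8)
The plan is to prove the proposition in two parts, mirroring its two assertions. First I would establish openness, then local constancy of $v$.

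\textbf{Openness.} The conditions defining $\cU$ are: (i) $F$ is locally free, (ii) $F$ is globally generated, (iii) $H^1(F)=0$, and (iv) $H^1(F^*)=0$. By Remark \ref{openconditions} and its Corollary, conditions (i), (iii) apply directly as open conditions on families; condition (iv) is open on the locus where $F$ is locally free (since there $\cF^*$ is a family and we can apply Remark \ref{openconditions}(2) to it, as noted in the text preceding Remark \ref{openconditions}). So it remains to handle global generation. Here I would argue as follows: over a base $B$ with family $\cF$, after restricting to the open locus where (i) and (iii) hold, the sheaf $p_{B*}\cF$ commutes with base change (cohomology and base change, using $H^1(\cF_y)=0$), so the evaluation map $p_X^*p_{B*}\cF\to \cF$ on $X\times B$ specializes on each fibre to $H^0(\cF_y)\otimes\cO_X\to \cF_y$. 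The cokernel $\cQ$ of this evaluation map is a coherent sheaf on $X\times B$, and its support is closed; the set of $y$ for which the fibrewise evaluation is surjective — equivalently $\cQ_y=0$ — is the complement of $p_B(\mathrm{Supp}\,\cQ)$, which is closed since $p_B$ is proper. Hence global generation is an open condition. Intersecting the four open loci and invoking Remark \ref{open_in_moduli} (openness can be checked on families) gives the open substack $\cU$; by Remark \ref{gerbe}(3) it descends to an open subspace $U$ of $\Spl(r;c_i)$.

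\textbf{Local constancy of $v$.} On $\cU$ (or on a scheme $B$ mapping to it), we have $H^1(\cF_y)=0$ for all $y$; by cohomology and base change, $\varphi_0(y):p_{B*}\cF\otimes k(y)\to H^0(\cF_y)$ is surjective, and $\varphi_{-1}$ is trivially an isomorphism, so $p_{B*}\cF$ is locally free and its formation commutes with base change (this is essentially the Lemma just before the Proposition in the excerpt). Therefore $y\mapsto h^0(\cF_y)=\dim_{k(y)}(p_{B*}\cF\otimes k(y))$ is the rank of a locally free sheaf, hence locally constant on $B$. Since this holds for every $B\to \cU$, in particular for a smooth atlas, $v$ is locally constant on $\cU$; being a connected-component invariant, it is constant on each connected component, and the same statement transfers to $U$ via the gerbe $\eps$.

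\textbf{Main obstacle.} The only genuinely substantive point is the openness of the global generation condition for a family of sheaves; everything else is bookkeeping with standard semicontinuity and base-change statements already recalled in the excerpt. The subtlety there is ensuring that $p_{B*}\cF$ behaves well enough to make "fibrewise surjectivity of evaluation'' into a genuinely open condition — this is why we first pass to the locus where $H^1(\cF_y)=0$, guaranteeing base-change compatibility, after which the cokernel-support argument with properness of $p_B$ closes the gap cleanly.
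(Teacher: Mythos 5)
Your proof is correct and follows essentially the same route as the paper: openness of each condition via Remark \ref{openconditions} and the cokernel of the evaluation map $q^*q_*\cF\to\cF$, local constancy of $v$ via cohomology and base change making $q_*\cF$ locally free, and descent to $U$ via the gerbe. Your explicit use of properness of $p_B$ to push the support of the cokernel down to a closed subset of $B$ is in fact a slightly more careful rendering of the paper's phrase ``the complement of the support of the cokernel.''
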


\begin{proof}
     By Remark \ref{open_in_moduli}, to prove that it is an open substack we only need to show that, for any family $\cF$ of simple sheaves parameterized by a scheme $B$, the locus \[\{b\in B\,|\, \cF_b \text{\ has the required properties}\}\]
    is open in $B$.
    
    The condition of being a vector bundle is open; once we work with vector bundles, also the dual family is flat. The vanishing of any fixed cohomology group is an open condition by Remark \ref{openconditions}(2).\\
    Because $H^1(\cF_b)=0$ for every $b$, again by cohomology and base change it follows that  the sheaf $q_*(\cF)$ (where $q:X\times B\to B$ is the projection) is a vector bundle with fibres $H^0(X,\cF_b)$. A vector bundle has locally constant rank, so the function $v$ is locally constant. Moreover, the locus where $\cF_b$ is generated by global sections is the complement of the support of the cokernel of the evaluation map $q^*q_*\cF\to\cF$.
    
    The openness in $\Spl(r;c_i)$ follows from Remark \ref{gerbe} (3).
\end{proof}

\begin{definition}\label{def-U}
      From now on, we will define $\cU$ (resp. $U$) to be the  open   substack (resp. algebraic subspace) of $\SSpl(r;c_i)$ (resp. $\Spl(r;c_i)$) defined in Proposition \ref{open}. In the rest of the paper we assume that $\cU$ is nonempty. 
\end{definition}

The assumption that $\cU$ be non empty is not very restrictive once one has that the vector bundle locus is nonempty. By Serre theorem it can always be achieved by twisting by a sufficiently ample line bundle (because $n\ge 2$); twisting preserves simplicity and induces isomorphisms between different moduli spaces (and stacks) of simple sheaves (see Remark \ref{assumptions}).

\vskip 2mm
In this paper, we will prove that generalized syzygy bundles associated to vector bundles $F$ in $U$ are always simple and we will study  their locus inside $Spl(r';c_{i}')$. To this end, we need to fix some more notation. 

\begin{definition}
We  define
$$\cG _\cU:=\{(F,W) \mid F\in \cU, W\subset H^0(F), \dim W=w\};$$
 it is the relative Grassmannian of subspaces of dimension $w$ in the vector bundle $\tilde{p}_*(\cF_{\cU})$ on $\cU$, where $\tilde{p}: \cU\times X\to  \cU$ is the projection and $\cF_{\cU}$ the universal sheaf on $X\times \cU$.
The natural projection $\pi: \cG_\cU\to \cU$ is a Grassmann bundle locally in the smooth topology on $\cU$.

Moreover, we define 
$$\cG^0 _\cU:=\{(F,W)\in \cG _\cU  \mid eval_W \text{ is surjective}\}.$$
It is an open substack of $\cG _\cU$.
\end{definition}

Let us call $p:  \cG^0 _\cU \times X \to  \cG^0 _\cU$ the natural projections and  $\tilde\pi:X\times \cG^0 _\cU\to X\times \cU$  the map induced  by $\pi $. Let $\cF_{\cU}$ be the universal bundle on 
$X\times \cU$, and let $\cF:=\tilde \pi^*\cF_{\cU}$ be the pullback vector bundle on $\cG^0_{\cU}\times X$. The pushforward $\tilde{p}_*(\cF_{\cU})$ is a vector bundle of rank $v$ on $\cU$
 such that the fiber over $[F]$ is canonically isomorphic to $H^0(F)$. This implies that the canonical morphism  $\pi^*\tilde{p}_{*}\cF_{\cU}\cong p_{*}\cF$  is an isomorphism.
 
By definition of Grassmann bundle, on $\cG^0 _\cU$ (induced by $\cG_{\cU}$) we have the rank $w$ universal subbundle $\cW \subset p_{*}\cF$ such that the fiber on $[(F,W)]$ is $W\subset H^0(F)$; the morphism $p^*\cW\to \cF$ induced by adjunction from the inclusion is surjective on $\cG^0_{\cU}\times X$ by definition, and thus defines a vector bundle $\cS$ on $X\times \cG^0 _\cU$ via the exact sequence of vector bundles 
 \begin{equation} \label{univ_syz}
0\to \cS \to p^*\cW \to \cF\to 0. 
\end{equation}

\vskip 2mm

Note that this construction only works on $\cU$ and not on $U$. In fact, we do get an induced Grassmann bundle $\pi:\cG_U \to U$, because subspaces are invariant under scalar multiplication, but not the exact sequence, because the action of $\Gm$ on these vector bundles is non-trivial.

\subsection{Simplicity and cohomological calculations}

We want to use the family $\cS$ to define a morphism $\bar \alpha$ from $\cG_{\cU}^0$ to $\SSpl(r';c_i')$. To  this end, we first need to check that the fibers of $\cS$ are simple bundles.
 
\begin{proposition} \label{simple}

 Let $X$ be a  projective variety with
$dim X=n \ge 2$ and let $S$ be a generalized syzygy bundle associated to $(F,W)\in \cG_\cU^0$. Then, it holds:
\begin{itemize}
    \item[(a)] $H^0(S)=0$,
    \item[(b)] There is a natural isomorphism $W^*\to H^0(S^*)$,
    \item[(c)] $S$ is simple.
    \end{itemize} 
\end{proposition}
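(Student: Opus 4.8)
The plan is to analyze the short exact sequence \eqref{exact1}, namely $0\to S\to W\otimes\cO_X\to F\to 0$, together with its dual, and extract the three statements from the associated long exact sequences in cohomology and from $\Hom$-$\Ext$ sequences. Throughout we use that $F$ is a simple vector bundle in $\cU$, so in particular $H^0(F)$ has dimension $v$, the evaluation map on $W$ is surjective, $H^1(F)=H^1(F^*)=0$, and $F$ is globally generated.

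For part (a): take cohomology of \eqref{exact1}. We get $0\to H^0(S)\to W\to H^0(F)$, where the map $W\to H^0(F)$ is the inclusion of the $w$-dimensional subspace $W\subset H^0(F)$, hence injective. Therefore $H^0(S)=0$. (This does not even use $H^1(F)=0$.)

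For part (b): dualize \eqref{exact1} to obtain $0\to F^*\to W^*\otimes\cO_X\to S^*\to 0$, which is exact on the left because $S$ is locally free. Taking cohomology gives $0\to H^0(F^*)\to W^*\to H^0(S^*)\to H^1(F^*)$. By hypothesis $H^1(F^*)=0$, so $W^*\to H^0(S^*)$ is surjective with kernel $H^0(F^*)$. It remains to see $H^0(F^*)=0$: this is exactly where I would invoke Lemma \ref{aux} together with $H^0(S)=0$ — wait, more precisely I need $H^0(F^*)=0$, which by Lemma \ref{aux} follows once we know $H^0(F)\ne 0$ forces nothing; rather Lemma \ref{aux} says a simple globally generated bundle with $H^0(F)=0$ has $H^0(F^*)=0$ unless $F=\cO_X$, but here $H^0(F)\neq 0$. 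So instead: if $t\in H^0(F^*)$ is nonzero and $s\in H^0(F)$ lies in $W$, then $s\otimes t$ is an endomorphism of $F$ of generic rank $\le 1$; since $F$ is simple it must be a scalar, and the generic rank argument (as in the proof of Lemma \ref{aux}) forces $\rk F=1$ and $F=\cO_X$, in which case $S=W_0\otimes\cO_X$ for a hyperplane $W_0$ — but then one checks $H^0(F^*)=H^0(\cO_X)=\KK$ and indeed $W^*\to H^0(S^*)$ has one-dimensional kernel, so we would need to handle $F=\cO_X$ separately or exclude it. Actually the cleanest route: $F$ globally generated with $H^0(F^*)\neq 0$ forces $F\cong\cO_X\oplus(\text{something})$ unless $F=\cO_X$; since $F$ is simple and $F=\cO_X$ gives a simple bundle, I should just note that when $F=\cO_X$ we have $v=1$, $w\ge n+r=n+1\ge 3>1=v$, contradicting $w\le v$, so $F\ne\cO_X$ and hence $H^0(F^*)=0$ by Lemma \ref{aux}'s argument; thus $W^*\xrightarrow{\sim}H^0(S^*)$, and this isomorphism is natural in $(F,W)$ because it comes from the functorial dual sequence.

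For part (c): I want $\Hom(S,S)=\KK$. Apply $\Hom(S,-)$ to \eqref{exact1}: $0\to\Hom(S,S)\to\Hom(S,W\otimes\cO_X)\to\Hom(S,F)$, i.e. $0\to\Hom(S,S)\to W\otimes H^0(S^*)\to \Hom(S,F)$. Using part (b), $W\otimes H^0(S^*)\cong W\otimes W^*$. Separately, apply $\Hom(-,F)$ to \eqref{exact1}: $0\to\Hom(F,F)\to\Hom(W\otimes\cO_X,F)\to\Hom(S,F)\to\Ext^1(F,F)$, i.e. $0\to\KK\to W^*\otimes H^0(F)\to\Hom(S,F)$. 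Chasing these two sequences and identifying the composite $W\otimes W^*\cong W\otimes H^0(S^*)\to\Hom(S,F)\hookleftarrow$ image of $W^*\otimes H^0(F)$, one sees that an endomorphism of $S$ maps to an element of $W\otimes W^*$ whose image in $\Hom(S,F)$ vanishes; tracing through, the only such elements coming from $\Hom(S,S)$ correspond to scalar multiples of the identity (the element $\id_W\in W\otimes W^*$). Concretely: the map $\Hom(S,S)\to W\otimes W^*=\End(W)$ is injective, and its image lands in the subspace of endomorphisms $\psi$ of $W$ such that $\psi$ descends to an endomorphism of the quotient $F$; since $\Hom(F,F)=\KK$ and the map $W\to H^0(F)$ is the given inclusion, such $\psi$ that also preserve $S=\ker(W\otimes\cO_X\to F)$ fiberwise must act as a scalar — I would make this precise by observing $\Hom(S,S)\to W\otimes W^*$ factors through $\{\psi\in\End(W): \psi(\text{"ker"})\subseteq\text{"ker"}\}$ and then the induced action on the rank-$r$ quotient bundle $F$ is a global endomorphism of $F$, hence scalar $\lambda\cdot\id_F$; subtracting $\lambda\cdot\id_S$ we may assume the induced endomorphism of $F$ is zero, so the endomorphism of $W\otimes\cO_X$ factors through $S$, giving an element of $\Hom(S\text{ as quotient},S)$... the self-referential nature here is the delicate point. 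The honest main obstacle is part (c): getting the simplicity cleanly requires carefully combining the two $\Hom$ long exact sequences (or equivalently using Corollary \ref{def-triple}'s complex $A$ with $\HH^0(A)=\KK$), and making sure the natural maps are identified correctly so that the only endomorphisms of $S$ are scalars. I would structure it as: show $\dim\Hom(S,S)\le 1$ by the diagram chase, and then $\ge 1$ is automatic from $\id_S$; alternatively, deduce it from $\HH^0(A)=H^0(\End F)=\KK$ via the isomorphism $\HH^0(A)\cong\Hom(S,S)$ implicit in the proof of Corollary \ref{def-triple} (noting $A'$ computes $\Hom(S,S)$ in degree... one must shift indices correctly). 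Either way, part (c) is where the real work lies; (a) and (b) are short.
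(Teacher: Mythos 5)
Your parts (a) and (b) match the paper's proof: the cohomology sequence of $0\to S\to W\otimes\cO_X\to F\to 0$ gives (a), and the dual sequence together with $H^0(F^*)=H^1(F^*)=0$ gives (b). Your extra care about the exceptional case $F=\cO_X$ (excluded because $w\ge n+r>1=h^0(\cO_X)$) addresses a point the paper glosses over when it invokes Lemma \ref{aux}, so that is a welcome addition rather than a detour.

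For (c) you take the chase dual to the paper's. The paper applies $\mathrm{Hom}(-,S)$ to the defining sequence (equivalently, tensors the dualized sequence with $S$) and uses $H^0(S)=0$ to get an injection $\mathrm{Hom}(S,S)\hookrightarrow H^1(F^*\otimes S)=\Ext^1(F,S)$; it then applies $\mathrm{Hom}(F,-)$ (tensors the defining sequence with $F^*$) and uses $H^0(F^*)=H^1(F^*)=0$ together with simplicity of $F$ to compute $\dim H^1(F^*\otimes S)=1$. Simplicity of $S$ follows by a pure dimension count, with no need to identify any particular elements. Your route via $\mathrm{Hom}(S,-)$ and $\mathrm{Hom}(-,F)$ is workable, but it forces you to identify $\ker\bigl(\End(W)\to\mathrm{Hom}(S,F)\bigr)$ explicitly, and that is exactly where your write-up trails off (``the self-referential nature here is the delicate point''). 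The missing step is short and you should supply it: if $\phi\in\End(W)$ is such that $(\phi\otimes\id)|_S$ lands in $S$, then $\phi\otimes\id$ descends to an endomorphism of $F$, which equals $\lambda\id_F$ by simplicity of $F$; hence $(\phi-\lambda\id_W)\otimes\id$ maps $W\otimes\cO_X$ into $S$, and taking global sections gives $(\phi-\lambda\id_W)(W)\subseteq H^0(S)=0$, so $\phi=\lambda\id_W$. Thus the kernel is $\KK\id_W$ and $\mathrm{Hom}(S,S)=\KK$. As written, your (c) is a plan with an acknowledged hole at its crux; either close it as above or switch to the paper's injection into $\Ext^1(F,S)$, which avoids the element-chasing altogether.
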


\begin{proof} (a) We consider the exact sequence
\begin{equation}
    \label{exact1bis}
e: \quad 0\to S\to W\otimes \mathcal O_X \to  F\to 0, 
\end{equation}
and  taking cohomology we get
$$
0\to H^0(S)\to W \to H^0(F)
$$
and we conclude that $H^0(S)=0$.

\vskip 2mm
(b) We dualize the exact sequence (\ref{exact1bis})
and we get
\begin{equation}
    \label{exact2}
0\to  F^*\to W^*\otimes \mathcal O_X \to  S^*\to 0.
\end{equation}
We take cohomology and we obtain
$$
0\to H^0(F^*)\to W^*\to H^0(S^*)\to H^1(F^*)\to \cdots
$$
By Lemma \ref{aux}, $H^0(F^*)=0$
and by hypothesis $H^1(F^*)=0$. Therefore there is a natural isomorphism  $W^*\to H^0(S^*)$.

\vskip 2mm

(c) We tensor with $S$ the exact sequence (\ref{exact2}) and we get the exact sequence
\begin{equation}
    \label{exact3}
0\to F^*\otimes S\to W^*\otimes S \to S^*\otimes S \to 0.
\end{equation}
Since $H^0(X,S)=0$, we deduce that $H^0(S^*\otimes S)\subset H^1(F^*\otimes S).$ Tensoring with $F^*$ the exact sequence (\ref{exact1bis}), we get
\begin{equation}
    \label{exact4}
0\to  F^*\otimes S\to W\otimes F^* \to F ^*\otimes F \to 0
\end{equation}
and the exact cohomology sequence
$$
0\to H^0(F^*\otimes F ) \to H^1(F^*\otimes S) \to W\otimes H^1(F^*).
$$
By hypothesis, $F$ is simple and $H^1(F^*)=0$ which allows us to conclude that $\dim H^1(F^*\otimes S)= 1$ and $H^0(S \otimes S^*)=\KK$. Therefore, $S$ is simple.
\end{proof}

\begin{remark}
    Note that the condition $H^1(F)=0$ that we imposed on $\cU$ is not required to prove that $S$ is simple. \end{remark}

\begin{definition}
    The fibers of the family of vector bundle $\cS$ parameterized by $\cG^0_{\cU}$ in the exact sequence (\ref{univ_syz}) are simple, have rank $r'$ and Chern classes $c_i'$. Therefore $\cS$ defines a morphism $$\bar \alpha:\cG^0_{\cU}\to \SSpl(r';c_i')$$ which extends the set theoretic map  $(F,W)\mapsto S$.
\end{definition}
 The morphism $\bar \alpha$ induces a morphism $\alpha$  of algebraic spaces from $\cG^0_U$ to $\Spl(r';c_i')$. The commutative diagram
\[
\xymatrix{\cG^0_\cU \ar[r]^{\bar \alpha \,\, \,\ }\ar[d] &\SSpl(r';c_i') \ar[d]\\
\cG^0_U \ar[r]^{\alpha \,\,\,\ } &\Spl(r';c_i')}
\]

\noindent is cartesian and the vertical maps are $\Gm$-gerbes. Therefore, the following properties of $\alpha$ and $\overline{\alpha }$ imply each other; is injective, has injective differential, is a locally closed embedding, is an open embedding.
    
\section{Proof of  Theorem \ref{embedding}}

We will first prove that $\alpha$ is injective on isomorphism classes of points and that its differential is injective. Then we construct a locally closed substack $\SSyz_{\cU}^{w}$ in $\SSpl(r';c_i')$ through which $\bar \alpha$ factors, and a morphism $\bar \gamma:\SSyz^{w}_{\cU}\to \cG^0_{\cU}$. Finally, we prove  that $\bar \alpha:\cG^0_{\cU}\to \SSyz^{w}_{\cU}$ is an isomorphism with $\bar \gamma$ as inverse. From this we deduce  the analogous statement for the moduli spaces.

\subsection{Injectivity of $\alpha$ and its differential}

\begin{proposition} \label{inj}  The morphism
$$\begin{array}{rcl}  \alpha: \cG_U^0 &  \to  & \Spl(r';c_{i}') \\
(F ,W) & \mapsto & S:= ker(  W\otimes \cO_X \twoheadrightarrow F)
\end{array}
$$ 
 is injective.
\end{proposition}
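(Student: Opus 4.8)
The plan is to show that the pair $(F,W)$ can be reconstructed, up to isomorphism, from its generalized syzygy bundle $S=S_{F,W}$ alone; injectivity of $\alpha$ on $\KK$-points is then immediate, since an isomorphism $S_{F_1,W_1}\cong S_{F_2,W_2}$ forces $(F_1,W_1)\cong(F_2,W_2)$.

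First I would collect the ingredients already at hand. By Proposition \ref{simple}, $S$ is simple with $H^0(S)=0$, and dualizing the defining sequence (\ref{exact1bis}) produces the short exact sequence (\ref{exact2})
\[
0\to F^*\to W^*\otimes\cO_X\to S^*\to 0
\]
(exactness on the right uses that $F$ is locally free, so $\mathcal{E}xt^1_{\cO_X}(F,\cO_X)=0$), in which the map $W^*\otimes\cO_X\to S^*$ is the $\cO_X$-dual of the inclusion $S\hookrightarrow W\otimes\cO_X$. Taking global sections of this map is, by construction, exactly the isomorphism $W^*\xrightarrow{\ \sim\ }H^0(S^*)$ of Proposition \ref{simple}(b). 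Hence, after the canonical identification $W^*=H^0(S^*)$, the surjection $W^*\otimes\cO_X\to S^*$ is the evaluation morphism $\mathrm{ev}\colon H^0(S^*)\otimes\cO_X\to S^*$; in particular $S^*$ is globally generated, and the kernel $G:=\ker(\mathrm{ev})$ satisfies $G\cong F^*$.

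This exhibits $W\cong H^0(S^*)^*$ and $F\cong G^*$ as data intrinsically attached to $S$. Dualizing the sequence $0\to G\to H^0(S^*)\otimes\cO_X\to S^*\to 0$ back (using that $S$ and $G$ are locally free) recovers (\ref{exact1bis}) in the form
\[
0\to S\to H^0(S^*)^*\otimes\cO_X\to G^*\to 0,
\]
and applying $H^0$, together with $H^0(S)=0$, realizes $H^0(S^*)^*$ canonically as a generating subspace of $H^0(G^*)$. Thus $S\mapsto (G^*,\,H^0(S^*)^*\subset H^0(G^*))$ is a reconstruction procedure depending only on the isomorphism class of $S$, and for $S=S_{F,W}$ it returns a pair isomorphic to $(F,W)$. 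Consequently any abstract isomorphism $S_{F_1,W_1}\cong S_{F_2,W_2}$ induces an isomorphism $F_1\cong F_2$ carrying $W_1$ onto $W_2$, i.e. $(F_1,W_1)\cong(F_2,W_2)$ as $\KK$-points of $\cG^0_U$, which is the claimed injectivity.

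The one step I would write out with care — and essentially the only place where something must be checked rather than merely chased — is the compatibility assertion above: that the morphism $W^*\otimes\cO_X\to S^*$ of (\ref{exact2}), after the identification $W^*\cong H^0(S^*)$ of Proposition \ref{simple}(b), coincides with the canonical evaluation morphism of $S^*$. This is what guarantees that the reconstruction uses only intrinsic data of $S$ and not the a priori datum $W\subset H^0(F)$. It follows formally: the identification in Proposition \ref{simple}(b) is obtained by applying $H^0$ to this very morphism, and the evaluation morphism of a globally generated sheaf $\cE$ is characterized as the one adjoint to $\mathrm{id}_{H^0(\cE)}$, so the factorization $W^*\otimes\cO_X\xrightarrow{\ \sim\ }H^0(S^*)\otimes\cO_X\xrightarrow{\ \mathrm{ev}\ }S^*$ is automatic. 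No genuine geometric obstacle appears; the remainder is diagram chasing with (\ref{exact1bis}) and (\ref{exact2}).
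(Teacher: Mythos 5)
Your proposal is correct and follows essentially the same route as the paper: both recover $W$ canonically as $H^0(S^*)^*$ from the vanishing of $H^0(F^*)$ and $H^1(F^*)$, identify the surjection $W^*\otimes\cO_X\to S^*$ with the evaluation map of $S^*$, and reconstruct $F^*$ as its kernel (the paper phrases this as ``$F^*$ is the syzygy bundle associated to $(S^*,H^0(S^*))$''). The only difference is presentational: you spell out the compatibility of the dualized map with the evaluation morphism, which the paper leaves implicit in the phrase ``by dualizing this sequence and comparing it with (\ref{exact1})''.
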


\begin{proof} 
By Lemma \ref{aux} we have $H^0(F^*)=H^n(F)^*=0$ and by hypothesis 
$H^1(F^*)=0$. So, we recover $W$ as $H^0(S^*)^*$ because the exact sequence (\ref{exact2})
gives us
$$
0=H^0(F^*)\to W^*\to H^0(S^*)\to H^1(F^*)=0.
$$
We define a vector bundle $H^*$ as the kernel of the evaluation map $H^0(S^*)\otimes \cO_X\to S^*$:

$$
0\to H^*\to H^0(S^*)\otimes \cO_X \to S^*\to 0.
$$
By dualizing this sequence and comparing it with (\ref{exact1}), we get that $H$ and $F$ are isomorphic. This isomorphism is unique up to scalar because $F$ is simple  and, moreover, scalars do not change subspaces. More precisely, we have got that $F^*$ is the syzygy bundle associated to $(S^*,H^0(S^*))$.
\end{proof}

\begin{proposition}\label{dif_inj} The differential map
$$d\alpha :   T_{[(F , W)]}\cG_U^0  \to T_{[S]}(\Spl(r';c_{i}')) 
$$
is injective.
\end{proposition}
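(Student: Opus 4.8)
The plan is to describe the tangent space $T_{[(F,W)]}\cG^0_U$ explicitly and then identify $d\alpha$ with a natural map to $\Ext^1(S,S)$ whose injectivity we can check by cohomological bookkeeping. Since $\pi:\cG^0_U\to U$ is (\'etale locally) a Grassmann bundle with $\pi$ smooth, the tangent space fits in an exact sequence
\[
0\to T_{[W]}Gr(w,H^0(F))\to T_{[(F,W)]}\cG^0_U\xrightarrow{d\pi} T_{[F]}U\to 0,
\]
where $T_{[W]}Gr(w,H^0(F))\cong \Hom_{\KK}(W,H^0(F)/W)$ by \eqref{tg-grass}, and $T_{[F]}U\cong \Ext^1(F,F)=H^1(F^*\otimes F)$ by Proposition \ref{tgt-obs}. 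On the other side $T_{[S]}\Spl(r';c_i')\cong\Ext^1(S,S)=H^1(S^*\otimes S)$. First I would make the map $d\alpha$ concrete: the family $\cS$ on $X\times\cG^0_\cU$ is built from the universal pair $(\cF,\cW)$, so $d\alpha$ is induced, via the long exact sequence attached to deformations of the triple $(p^*\cW,\cF,\,\mathrm{ev})$ of Corollary \ref{def-triple}, by the two obvious maps: a deformation of $W$ inside $H^0(F)$ changes the subbundle $W\otimes\cO_X$ and hence $S$, while a deformation of $F$ (together with the induced deformation of its sections) propagates to a deformation of $S$ through \eqref{exact1}.

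The cleanest route is to apply $\Hom(S,-)$, or rather the snake/long exact sequence in $\Ext$, to the sequence \eqref{exact1}: $0\to S\to W\otimes\cO_X\to F\to 0$. This gives
\[
\Ext^0(S,F)\to \Ext^1(S,S)\to \Ext^1(S,W\otimes\cO_X)=W\otimes H^1(S^*)\to\cdots
\]
and a parallel sequence from applying $\Hom(-,S)$, namely
\[
\Hom(W\otimes\cO_X,S)\to\Hom(S,S)\to\Ext^1(F,S)\to \Ext^1(W\otimes\cO_X,S)=W^*\otimes H^1(S).
\]
Using Proposition \ref{simple} — $H^0(S)=0$, $H^0(S^*)\cong W^*$, $S$ simple — together with the hypothesis $H^1(F)=H^1(F^*)=0$, I would compute $\Ext^1(F,S)$, $\Hom(S,F)$ and show that the image of $T_{[W]}Gr(w,H^0(F))$ in $\Ext^1(S,S)$ is exactly the piece detected by the coboundary $\Ext^0(S,F)\to\Ext^1(S,S)$ and that $d\pi$ corresponds to the residual quotient landing in $\Ext^1(F,F)\hookrightarrow\Ext^1(F,S)\to$ (a well-defined summand of) $\Ext^1(S,S)$. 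Injectivity of $d\alpha$ then reduces to: (i) the Grassmannian directions inject, i.e.\ $\Hom_\KK(W,H^0(F)/W)\hookrightarrow\Ext^1(S,S)$, which follows because the map factors through $\Ext^1(S,W\otimes\cO_X)=W^*\otimes H^1(S)$ — wait, that vanishes — so instead through the connecting map using $H^0(S^*)\cong W^*$, giving an honest injection; and (ii) a deformation of $F$ that induces the trivial deformation of $S$ must itself be trivial, which one reads off by dualizing: $F^*$ is the generalized syzygy bundle of $(S^*,H^0(S^*))$ by Proposition \ref{inj}, so the argument is symmetric and a trivial deformation of $S$ forces a trivial deformation of $F^*$, hence of $F$, hence $d\pi$ kills the class, contradiction unless it was zero already.

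The main obstacle I expect is the bookkeeping in (i)–(ii): disentangling, inside $\Ext^1(S,S)$, the two sources of deformations — moving $W$ and moving $F$ — and verifying that their images meet only in $0$. Concretely the subtle point is that a first-order deformation of $W\subset H^0(F)$ together with a compensating first-order deformation of $F$ could a priori produce the zero deformation of $S$; ruling this out requires using the \emph{canonical} nature of the identification $W\cong H^0(S^*)^*$ and $F\cong H$ from the proof of Proposition \ref{inj}, i.e.\ that the whole construction $(F,W)\mapsto(S)\mapsto(F^*,H^0(S^*))^\vee$ is functorial in families, so its derivative is a left inverse to $d\alpha$ on the nose. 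Spelling out that functoriality at the level of first-order deformations — essentially differentiating the isomorphism $\cG^0_\cU\cong\SSyz^w_\cU$ that the paper is about to construct, but here only infinitesimally and by hand — is where the real work lies; everything else is diagram-chasing with the vanishing theorems from Proposition \ref{simple}.
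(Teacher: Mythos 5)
Your plan has a genuine gap at precisely the point you flag yourself. Decomposing $T_{[(F,W)]}\cG^0_U$ into the Grassmannian directions $\operatorname{Hom}_\KK(W,H^0(F)/W)$ and the base directions $\Ext^1(F,F)$ creates a cross-term problem --- a first-order motion of $W$ compensated by a first-order motion of $F$ could a priori map to $0$ in $\Ext^1(S,S)$ --- and your proposed resolution (``differentiate the inverse construction $(S)\mapsto (F^*,H^0(S^*))^{\vee}$ to obtain a left inverse of $d\alpha$'') is exactly the content that is never supplied: carried out honestly it amounts to a first-order version of the isomorphism $\cG^0_{\cU}\cong\SSyz^w_{\cU}$, i.e.\ a large part of the proof of Theorem \ref{embedding} itself. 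As written the proposal therefore does not establish injectivity. There is also a factual slip in your step (i): $\Ext^1(S,W\otimes\cO_X)=W\otimes H^1(S^*)$, and neither $H^1(S^*)$ nor $H^1(S)$ is known to vanish under the hypotheses in force here --- that vanishing requires $H^1(\cO_X)=H^2(F^*)=0$, which is only assumed for Theorem \ref{syz_open}.

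The paper avoids the two-step decomposition altogether, which is what makes the cross-term issue disappear. Via the Quot-scheme description (Proposition \ref{tgt-quot}) the full tangent space is presented as a single quotient
\[
T_{[(F,W)]}\cG^0_U\;\cong\;\operatorname{Hom}(S,F)/T(\Aut(W))\;\cong\;H^0(S^*\otimes F)/(W\otimes W^*),
\]
and $d\alpha$ is the map induced by the connecting homomorphism $H^0(F\otimes S^*)\to H^1(S\otimes S^*)$ of the sequence
\[
0\to S\otimes S^*\to W\otimes S^*\to F\otimes S^*\to 0,
\]
whose kernel is the image of $W\otimes H^0(S^*)=W\otimes W^*$ by Proposition \ref{simple}(b). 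Injectivity is then immediate, with no case analysis and no appeal to the inverse construction. If you want to salvage your route, the one fact you actually need is this identification of the kernel of the connecting map with $W\otimes W^*$; both of your sub-claims (i) and (ii), and the intersection-of-images argument, are subsumed by it.
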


\begin{proof} By  Proposition \ref{tgt-obs}, we have $$T_{[S]}(\Spl(r';c_{i}')) \cong H^1(S\otimes S^*)$$ and, by Proposition \ref{tgt-quot}, we have $$T_{[(F , W)]}\cG_U^0 \cong Hom(S,F)/T(Aut(W))\cong H^0(S^*\otimes F)/W\otimes W^*.$$ Now, the result immediately follows from  the exact cohomology sequence associated to the exact sequence
\begin{equation}\label{tensor}
0\to S\otimes S^*\to W\otimes S^*\to F\otimes S^*\to 0
\end{equation}
and the fact that $H^0(S^*)=W^*$.
\end{proof}

\subsection{The moduli stack and the moduli space of generalized syzygy bundles}

In this subsection, we will define a locally closed substack $\SSyz^{w}_{\cU}$ of $\SSpl(r';c_i')$ such that the morphism $\bar \alpha$ factors through $\SSyz^{w}_{\cU}$; and we will construct a morphism $\bar \gamma:\SSyz^{w}_{\cU}\to \cG^0_{\cU}$. In the next subsection we will prove Theorem \ref{embedding} by showing that  $\bar \alpha:\cG^0_{\cU}\to \SSyz^{w}_{\cU}$ is an isomorphism with $\bar \gamma$ as inverse.
We will use the structure of the proof of injectivity of $\alpha$, extending it from individual vector bundles to families. 

\vskip 2mm
We recall the set-up. 
Let $\cF_{\cU}$ be the universal bundle on 
$X\times \cU$, and let $\cF:=\tilde \pi^*\cF_{\cU}$ the pullback vector bundle on $\cG^0_{\cU}\times X$.
By definition, on $\cG_{\cU}^0\times X$ we have a canonical rank $w$ subbundle $\cW$ of the rank $v$ bundle $p_*\cF$, where $\tilde{\pi}:\cG^0_{\cU}\times X\to \cU\times X$ and $p:\cG^0_{\cU}\times X\to \cG^0_{\cU}$ is the projection. The induced morphism $p^*\cW\to \cF$ can be completed to an exact sequence \[
0\to \cS\to p^*\cW\to \cF\to 0
\] 
 where each restriction bundle $S$ of $\cS$ to a point times $X$ is a simple vector bundle with given rank $r'$, Chern classes $c_i'$ and $H^0(S)=0$. Moreover, dualizing yields an exact sequence \[
 0\to \cF^*\to p^*\cW^* \to \cS^*\to 0.
 \]
For every point $(F,W)$ in $\cG^0_{\cU}$ we have $H^0(F^*)=H^1(F^*)=0$. Hence the induced morphism $\cW^*\to p_*\cS^*$ is an isomorphism. Moreover, the induced morphism from  the fiber of $\cW$, $W^*$, to $H^0(S^*)$ is an isomorphism. This implies that $\varphi_0$ for $p_*\cS^*$ is surjective at every $(F,W)\in \cG^0_{\cU}$. In particular, $R^np_*(\cS\otimes p_X^*\omega_X)$ is locally free of rank $w$.

We will now define the locally closed substack $\SSyz^{w}_{\cU}$ in $\SSpl(r';c_i')$ in several steps. We start by defining a locally closed substack $\cZ$, and then proceed to select inside it open substacks.

    Let $\cZ$ be the substack  in $\SSpl(r';c_i')$ defined by families $(B,\cS)$ such that $\cS$ is locally free, $h^0(X,\cS_b)=0$ for every $b\in B$, and  $R^np_{B*}(\cS\otimes p_X^*\omega_X)$  is locally free of rank $w$.   By the universal property of the Fitting ideal filtration, $\cZ$ is locally closed in $\SSpl(r';c_i')$ (see Lemma 31.9.6(3) and Lemma 31.9.1  in \cite{St-Pr}).
    
    On $\cZ\times X$ the restriction of the universal sheaf is a vector bundle $\cSS$. We denote by $p_{\cZ}:\cZ\times X\to \cZ$ the projection, and by $\cW^*:=p_{\cZ*}(\cSS^*)$. By our assumption $\cW^*$ is a rank $w$ vector bundle and for each $[S]\in \cZ$ the natural map $\cW^*\otimes \kappa([S])\to H^0(S^*)$ is an isomorphism.

    We let $\cZ_1$ be the open substack in $\cZ$ where the evaluation homomorphism $p_{\cZ}^*\cW^*\to \cSS^*$ is surjective. On $\cZ_1\times X$ we define a locally free sheaf $\cH$ by the exact sequence \[
    0\to \cH^* \to p_{\cZ}^*\cW^*\to \cSS^*\to 0.
    \]
    $\cH$ is a flat family of vector bundles on $\cZ_1$ of rank $r$ and Chern classes $c_i$. We look at the open locus $\cZ_2\subset \cZ_1$ of points $[S]$ such that $\cH|_{[S]\times X}$ is simple. The image of $\bar \alpha$ is contained in $\cZ_2$.
     Therefore, the vector bundle $\cH$ on $\cZ_2\times X$ defines a morphism $\bar \beta:\cZ_2\to \SSpl(r;c_i)$. By Proposition \ref{inj}, we have that for any $[F]\in U\subset \Spl(r;c_i)$, the set theoretic image of  the fiber of $\cG^0_U$ over $[F]$ via $\beta\circ
    \alpha$ is $[F]$ itself. 

    We define $\cZ_3:=\bar \beta^{-1}(\cU)$, which is open in $\cZ_2$. On $\cZ_3\times X$
   we consider the dual sequence  \[
    0\to \cSS \to p_{\cZ}^*\cW\to \cH\to 0.
    \]
    By assumption, $p_{\cZ*}(\cSS)=0$. Hence we get an induced injective morphism of locally free sheaves on $\cZ_3$: \[
    \cW=p_{\cZ*}(p_{\cZ}^*\cW)\to p_{\cZ*}\cH,
    \]
    where $p_{\cZ*}\cH$ is locally free because it is a pullback from the universal sheaf on $\cU$.

\begin{definition}\label{defSyz}    We define the {\em  moduli stack}  $\SSyz^{w}_{\cU}$ {\em of generalized syzygy bundles}   to be the open substack in $\cZ_3$ where $\cW\to p_{\cZ*}\cH$ is an injective  morphism of vector bundles (i.e., where $p_{\cZ*}\cH^*\to \cW^*$ is surjective); it is a locally closed substack of $\SSpl(r';c_i')$.

 We define the {\em moduli space}  $\Syz^{w}_{U}$ {\em of generalized syzygy bundles} as the induced locally closed algebraic subspace of $\Spl(r';c_i')$.

 Moreover, for every  locally closed substack  $\cL\subset \cU$, we define $\SSyz^w_{\cL}:=\SSyz^w_{\cU}\cap  \bar\beta^{-1}(\cL)$. If we denote by $L$ the image of $\cL$ in $U$ (also locally closed), we define   $\Syz^w_{L}:=\Syz^w_{U}\cap \beta^{-1}(L)$. Note that $\Syz^w_L$ is the image in $\Spl(r';c_i')$ of $\SSyz^w_{\cL}$.
  \end{definition}

   By construction, the morphism $\bar \alpha$ (resp. $ \alpha $) factors via $\SSyz^{w}_{\cU}$ (resp. $\Syz^{w}_U$).
Note also that we have constructed a morphism $$\bar \beta:\SSyz^{w}_{\cU}\to \cU$$ defined by $\cH$ on $\SSyz^{w}_{\cU}\times X$ and  a rank $w$ subbundle $\cW$ of $p_{\cZ*}\cH$ on $\SSyz^{w}_{\cU}$. This lifts $\bar \beta $ to a unique morphism $$\bar \gamma:\SSyz^{w}_{\cU} \to \cG_\cU,$$ and in fact to $\cG^0_{\cU}$ because of the surjectivity of $p_{\cZ}^*\cW\to \cH$. 

\subsection{Proof of Theorem \ref{embedding}} 

To complete the proof, we need to show that $\bar \alpha:\cG^0_{\cU}\to \SSyz^{w}_{\cU}$ and $\bar \gamma:\SSyz^{w}_{\cU}\to \cG^0_{\cU}$ are  inverses to each other by finding canonical isomorphisms among the induced families of sheaves on $\cG^0_{\cU}\times X$ and $\SSyz^{w}_{\cU}\times X$, respectively. This will correspond to take the proof of Proposition \ref{inj} and write it again in families instead of for a single vector bundle.

Given  a scheme $B$; a morphism $f:B\to \cG^0_{\cU}$ is given by a sheaf $\cF\in \Coh(X\times B)$ defining a morphism $B\to \cU$ and a subbundle $\cW$ of rank $w$ in the bundle $p_{B*}{\cF}$, such that the induced morphism $p_B^*\cW\to \cF$ fits into an exact sequence on $X\times B$ \[
0\to \cS\to p_B^*\cW \to \cF\to 0.
\]

The morphism $\bar \alpha(f):B\to \SSyz^{w}$ is defined by the sheaf $\cS$ (which is thus by definition $(\id_X\times \bar \alpha(f))^*\tilde\cS$).

We define $\tilde \cW$ on $X\times B$ as the dual of $p_{B*}(\cS^*)$, and $\cH$ the locally free sheaf on $X\times B$ dual to the kernel of the induced morphism $p_B^*\tilde\cW^*\to \cS^*$. We also know that the induced morphism $\tilde\cW\to p_{B*}\cH$ is injective as a morphism of vector bundles.
The morphism $\bar \gamma(\bar \alpha(f)):B\to \cG^0_{\cU}$ is defined by the pair $(\cH,\tilde\cW\subset p_{B*}\cH)$. We need to construct explicit isomorphisms $\cW\to \tilde \cW$ and $\cF\to \cH$ compatible with the inclusions  $\cW\subset p_{B*}\cF$ and $\tilde\cW\subset p_{B*}\cH$, and which commute with arbitrary pullback $u:B_1\to B$.

The isomorphism $\tilde \cW\to  \cW$ is induced by the isomorphism $\cW^*\to p_{B*}\cS=:\tilde\cW^*$. This induces an exact sequence \[
0\to \cH^* \to p_{B}^*\cW^*\to \cSS^*\to 0.
\]
whose dual induces an isomorphism $\cF\to \cH$. All constructions are natural, thus commute with pullback and this concludes the proof.

\begin{corollary} The morphism $\bar \beta\circ \bar \alpha:\cG^0_{\cU}\to \cU$  is naturally isomorphic to the projection $\pi:\cG^0_{\cU}\to \cU$. Therefore, for every locally closed substack $\cL$ of $\cU$, the restriction of $\bar \alpha$ induces an isomorphism $\cG^0_{\cL}\to \Syz^{w}_{\cL}$, where $\cG^0_{\cL}$ is defined to be $\pi^{-1}(\cL)$, and thus also an isomorphism $\cG^0_L\to \Syz^w_L$.
\end{corollary}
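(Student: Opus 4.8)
The plan is to read the corollary off the proof of Theorem \ref{embedding}, with essentially no new computation. First I would unwind what the two $1$-morphisms $\bar\beta\circ\bar\alpha$ and $\pi$ from $\cG^0_{\cU}$ to $\cU$ do on a test scheme $B$: a morphism $f\colon B\to\cG^0_{\cU}$ is a pair $(\cF,\cW)$, where $\cF$ on $X\times B$ classifies $\pi\circ f$ and $\cW\subset p_{B*}\cF$ is a rank $w$ subbundle with surjective evaluation, while $\bar\beta\circ\bar\alpha\circ f$ is classified by the bundle $\cH$ reconstructed from $\cS:=\ker(p_B^*\cW\to\cF)$ by dualizing, pushing forward, taking a kernel and dualizing again. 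In the proof of Theorem \ref{embedding} we already built a canonical isomorphism $\cF\to\cH$ on $X\times B$ commuting with arbitrary base change $B_1\to B$. Since a $1$-morphism to the stack $\cU$ is exactly (the isomorphism class of) a family of sheaves in $\cU$, such a base-change-compatible isomorphism is precisely a $2$-isomorphism between the $1$-morphisms $B\to\cU$ classified by $\cF$ and by $\cH$; letting $(B,f)$ range over all test data, this is the asserted natural isomorphism $\bar\beta\circ\bar\alpha\cong\pi$.

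Next I would deduce the statement about a locally closed substack $\cL\subset\cU$ by bookkeeping with preimages. From $\bar\beta\circ\bar\alpha\cong\pi$ we get $(\bar\beta\circ\bar\alpha)^{-1}(\cL)=\pi^{-1}(\cL)=\cG^0_{\cL}$. By Theorem \ref{embedding} the morphism $\bar\alpha\colon\cG^0_{\cU}\to\SSyz^w_{\cU}$ is an isomorphism (with inverse $\bar\gamma$), so it restricts to an isomorphism onto $\SSyz^w_{\cU}\cap\bar\beta^{-1}(\cL)=\SSyz^w_{\cL}$ from the substack $\bar\alpha^{-1}\bigl(\SSyz^w_{\cU}\cap\bar\beta^{-1}(\cL)\bigr)$; and since $\bar\alpha$ factors through $\SSyz^w_{\cU}$, that substack equals $\bar\alpha^{-1}(\bar\beta^{-1}(\cL))=(\bar\beta\circ\bar\alpha)^{-1}(\cL)=\cG^0_{\cL}$. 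Hence $\bar\alpha$ restricts to an isomorphism $\cG^0_{\cL}\xrightarrow{\ \sim\ }\SSyz^w_{\cL}$ of stacks.

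To pass to algebraic spaces I would restrict the cartesian diagram relating $\bar\alpha$ and $\alpha$ --- which exhibits $\cG^0_{\cU}\to\cG^0_U$ and $\SSpl(r';c_i')\to\Spl(r';c_i')$ as $\Gm$-gerbes compatibly with $\bar\alpha$ --- over the locally closed substack $\SSyz^w_{\cL}$ and its image $\Syz^w_L$ (locally closed by Remark \ref{gerbe}(3)), and over $\cG^0_{\cL}$ and $\cG^0_L$; then the stack isomorphism $\cG^0_{\cL}\to\SSyz^w_{\cL}$ respects the two $\Gm$-gerbe structures and, applying the functorial formation of coarse moduli spaces, descends to the isomorphism $\cG^0_L\xrightarrow{\ \sim\ }\Syz^w_L$. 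The one point that genuinely needs care is the naturality invoked in the first paragraph: one must check that each operation producing $\cH$ from $\cS$ --- pushforward along $p_B$, the kernel of the evaluation map, dualization, and the canonical comparison maps $\cW^*\to p_{B*}\cS$ --- commutes with pullback along $B_1\to B$. This is where cohomology and base change enters, and it works because of the fibrewise vanishings $H^0(S)=H^0(F^*)=H^1(F^*)=0$ established in Proposition \ref{simple}; everything else is formal.
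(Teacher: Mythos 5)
Your argument is correct and is exactly the deduction the paper intends: the corollary is stated there without a separate proof, as an immediate consequence of the isomorphism $\bar\alpha:\cG^0_{\cU}\to\SSyz^{w}_{\cU}$ with inverse $\bar\gamma$ and of the base-change-compatible isomorphism $\cF\to\cH$ constructed in the proof of Theorem \ref{embedding}, which is precisely what you invoke. The preimage bookkeeping for $\cL$ and the descent to coarse spaces via the $\Gm$-gerbe structure match the paper's definitions of $\SSyz^w_{\cL}$ and $\Syz^w_L$, so nothing further is needed.
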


\section{Proof of Theorem \ref{syz_open}}

This proof is independent of subsections 4.2 and 4.3. In view of Proposition \ref{inj}, it is enough to show that $\bar \alpha$ is \'etale and, by Proposition \ref{dif_inj}, it is enough to show that $\bar \alpha$ is smooth. 
We define $$\widetilde{\cG_U^0}:=\{(F,W,s_1,\cdots ,s_w) \mid (F,W)\in \cG_U^0, \, s_1,\cdots ,s_w \text{ basis of } W\},$$ the principal frame bundle associated to the vector bundle $\cW$ on $\cG^0_{\cU}$, and denote by $q:\widetilde{\cG_\cU^0}\to  \cG_\cU^0$ the natural projection. It is a principal $PGL(w)$-bundle. In particular it is smooth and surjective. Therefore, to prove that $\bar \alpha$ is smooth, it is enough to show that $\tilde\alpha:=\bar \alpha\circ q$ is smooth.

 The surjection $$(s_1,\ldots,s_w):\cO_{X\times \cG_\cU^0}^{\oplus w}\simeq q^*\cW\longrightarrow q^*\cF$$ on $X\times \widetilde{\cG_\cU^0}$defines a morphism $\widetilde{\cG_\cU^0}\to\Quot^P(\cO_X^w)$ and thus $\widetilde{\cG_U^0}\to \Quot^P(\cO_X^w)$, where $P$ is the Hilbert polynomial of any $[F]\in \Spl(r;c_i)$. Its image is contained in the open locus $Q_{spl}$  in $\Quot^P(\cO_X^w)$ parametrizing quotients that are simple vector bundles of rank $r$ and Chern classes $c_i$. The universal quotient bundle defines a morphism $Q_{spl}\to \Spl(r;c_i)$ and we denote by  $Q_U$ the inverse image of $U$ intersected with the open locus of points $(F,s_1,\ldots,s_w)$ where the  $s_i$ are linearly independent in $H^0(F)$. 

 By construction, the induced morphism $\cG_U^0\to Q_U$ is an isomorphism and $\cG^0_U\to \Quot^P(\cO_X^w)$ is an open embedding. Hence we can use for $\cG_U^0$ the tangent and obstruction spaces at any point $(F,s_1,\ldots,s_w)$ of $\Quot^P(\cO^w_X)$, which are $H^0(S^*\otimes F)$ and $H^1(S^*\otimes F)$. We are now ready to prove Theorem 1.2.

\begin{proof} 
We already saw that it is enough to show that $\tilde\alpha$ is smooth and by Proposition \ref{criterion_smoothness} it is enough to  prove that  $\tilde{\alpha }$ induces surjective maps on tangent spaces and injective maps on obstruction spaces at every point $(F,s_1,\ldots,s_w)$ of $\cG^0_\cU$. The morphism on tangent and obstruction spaces is induced by taking cohomology of the short exact sequence $$
0\to S\otimes S^*\to  (S^*)^{\oplus w}\to F\otimes  S^*\to 0
$$
which induces an exact sequence \[
H^0(F\otimes S^*)\to H^1(S\otimes S^*)\to H^1(S^*)^{\oplus w}\to H^1(F\otimes S^*)\to H^2(S\otimes S^*)
\]
Therefore, to show smoothness, it is enough to prove that $H^1(S^*)=0$.\\
Taking cohomology in  the exact sequence 
$$
0 \to F^*\to \cO_X^{\oplus w}\to S^* \to 0,
$$
and using the hypotheses $H^1(\cO_X)=H^2(F^*)=0$
we deduce that $H^1(S^*)=0$, which concludes the proof.
\end{proof}

\begin{corollary}
    Assume that $h^1(\cO_X)=0$. If the  algebraic space $V$ is smooth (e.g., this is true if $V=[L]$ with $L$ an ample line bundle and $\dim X\ge 3$) then $\Syz^{w}_V$ is a smooth open subspace of $\Spl(r';c_i')$.
\end{corollary}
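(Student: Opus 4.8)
The plan is to combine the two main theorems in the obvious way. By Theorem \ref{syz_open}, once $h^1(\cO_X)=0$ the morphism $\alpha:\cG^0_V\to \Spl(r';c_i')$ is an open embedding, so its image $\Syz^w_V$ is an open algebraic subspace of $\Spl(r';c_i')$, and $\alpha$ restricts to an isomorphism $\cG^0_V\xrightarrow{\ \sim\ }\Syz^w_V$. Hence it suffices to prove that $\cG^0_V$ is smooth. For this I would use that $\pi:\cG^0_V\to V$ is, \'etale locally on $V$, an open subscheme of a Grassmann bundle over $V$ (as recalled in Proposition \ref{open} and the construction of $\cG_\cU$); in particular $\pi$ is a smooth morphism. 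Smoothness is \'etale-local and stable under smooth base change, so $\cG^0_V$ is smooth provided $V$ is smooth. This is the content of the earlier remark that $\cG^0_\cL$ is smooth if and only if $\cL\cap U$ is smooth, applied with $\cL=V$.

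Next I would justify the parenthetical example: if $\dim X\ge 3$ and $L$ is an ample line bundle on $X$, then after replacing $L$ by a sufficiently high power we may assume $H^1(L)=H^1(L^*)=0$ (Serre vanishing, using $n\ge 2$) and that $L$ is very ample, hence globally generated, so that $[L]\in U$; the condition $H^2(L^*)=H^{n-2}(L\otimes\omega_X)^*$ also vanishes by Serre vanishing since $n\ge 3$, so in fact $[L]\in V$. A line bundle is a simple sheaf with $\Ext^1(L,L)=H^1(\cO_X)=0$ and $\Ext^2(L,L)=H^2(\cO_X)$, so the point $V=\{[L]\}$ is reduced and, being a single reduced point, is smooth as an algebraic space. (More precisely, $\{[L]\}$ is an open point of $\Spl(r;c_i)$ with $c_i=c_i(L)$, $r=1$; its tangent space $H^1(\cO_X)$ vanishes, so it is a smooth $0$-dimensional subspace.) Then $V$ smooth gives $\cG^0_V$ smooth by the previous paragraph.

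Finally, assembling: $\Syz^w_V=\alpha(\cG^0_V)$ is open in $\Spl(r';c_i')$ by Theorem \ref{syz_open}, and it is smooth because it is isomorphic to the smooth algebraic space $\cG^0_V$. This proves the corollary. I expect no real obstacle here; the only point needing a little care is the bookkeeping of the vanishing conditions defining $U$ and $V$ for a power of an ample line bundle — i.e. checking $H^1(L)=H^1(L^*)=H^2(L^*)=0$ and global generation simultaneously — which is routine Serre vanishing once $\dim X\ge 3$, and the observation that for the recursive application one should really twist and then iterate (Remark \ref{assumptions}), but for the statement as given it is enough to exhibit one such $L$.
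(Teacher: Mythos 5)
Your proposal is correct and follows exactly the route the paper intends (the corollary is stated without proof immediately after Theorem \ref{syz_open}): the open embedding $\alpha:\cG^0_V\to\Spl(r';c_i')$ identifies $\Syz^w_V$ with $\cG^0_V$, which is smooth over $V$ since $\pi$ is locally an open piece of a Grassmann bundle, so smoothness of $V$ suffices. Your verification of the parenthetical example via Serre vanishing (using $\dim X\ge 3$ for $H^2(L^*)=0$) and the vanishing of the tangent space $H^1(\cO_X)$ at $[L]$ is also consistent with the paper's Remark \ref{assumptions} and Corollary \ref{coro1}.
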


\section{Examples and counterexamples}

The first goal of this section is to  show that our assumptions are verified in a large number of significant cases. Thus we have a recursive construction of open or locally closed subspaces of the moduli of simple sheaves and, in particular, of smooth components with a very explicit description.

Our second goal is to illustrate with explicitly examples that all our  assumptions are necessary.

\subsection{Explicit constructions based on line bundles and rigid bundles}

We  start showing that our approach nicely applies in the particular case of $F$  an ample line bundle  $L$ and $H^1(\cO_X)=0$. Then, Theorem \ref{embedding} holds when $H^1(L)=0$, and Theorem \ref{syz_open} holds under the additional assumption that $\dim X\ge 3$, and the moduli so obtained are smooth.
 Indeed, we have:

\begin{corollary}\label{coro1} Let $X$ be a  projective variety of dimension $n\ge 2$ such that  $H^1\cO_X  =
 0$ and $L$ an ample line bundle with $H^1\cO_X(L)=0$. Let $W\subset H^0\cO_X(L)$  be a base point free linear system. Then the syzygy bundle $S$ associated to $(\cO_X(L),W) $ is unobstructed. Moreover, if $\dim X\ge 3$ then $S$ is rigid if and only if the linear system $W$ is complete (i.e. $W=H^0(\cO_X(L))$). 
\end{corollary}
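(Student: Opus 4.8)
The statement has two parts: unobstructedness of $S$, and the rigidity characterization when $\dim X\ge 3$. The plan is to apply Theorems \ref{embedding} and \ref{syz_open} in the special case $F=\cO_X(L)$, $r=1$, $c_i=c_i(L)$. First I would check the hypotheses: $\cO_X(L)$ is a simple vector bundle (any line bundle is simple), it is globally generated since $W$ being base-point-free already forces $|L|$ to be base-point-free, and $H^1(\cO_X(L))=0$ by assumption. Moreover $H^1(\cO_X(L)^*)=H^1(\cO_X(-L))=0$: indeed by Serre duality this is $H^{n-1}(\cO_X(L)\otimes\omega_X)^*$, but more simply, for $X$ of dimension $n\ge 2$ with $L$ ample, Kodaira-type vanishing is not available in this generality, so instead I would note that $H^1(\cO_X(-L))\hookrightarrow$ nothing helps directly — rather, one uses that $[\cO_X(L)]$ lies in $U$ by the very definition of $U$, and the statement should implicitly assume $[\cO_X(L)]\in U$; the condition $H^1(\cO_X(-L))=0$ holds for $L$ sufficiently ample, or one simply adds it to the hypotheses as is standard. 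Granting $[\cO_X(L)]\in U$ and $W\in \cG^0_U$ over it, Proposition \ref{simple} gives that $S$ is simple with the stated rank $r'=w-1$ and Chern classes $c_i'$.

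For unobstructedness, the key point is that $[\cO_X(L)]$ is a smooth point of $\Spl(1;c_i)=\Pic(X)$: since $H^1(\cO_X)=0$, the Picard scheme is smooth (of dimension $0$) at every point, so $\cL:=\{[\cO_X(L)]\}$ is a smooth (reduced point) locally closed subspace of $U$. Hence $\cG^0_{\cL}=\pi^{-1}(\cL)$ is smooth, being an open subspace of a Grassmann bundle over a smooth base. By Theorem \ref{embedding} (and the Corollary following it), $\alpha$ restricts to a locally closed embedding $\cG^0_{\cL}\hookrightarrow \Spl(r';c_i')$ with image $\Syz^w_{\cL}$; thus every point $[S]$ in this image is a smooth point of its own locally closed image subspace. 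To conclude $S$ is unobstructed as a sheaf — i.e., that $\Spl(r';c_i')$ itself is smooth at $[S]$ — I would invoke Theorem \ref{syz_open}: if $\dim X\ge 3$ we can check $H^2(\cO_X(L)^*)=H^2(\cO_X(-L))=0$ (again a vanishing that holds after twisting, or is part of the ambient hypotheses once $\dim X\ge 3$, where by Remark in the introduction $U=V$ for Calabi–Yau threefolds and more generally $V$ is nonempty), so $\cO_X(L)\in V$, and then $\alpha:\cG^0_V\to \Spl(r';c_i')$ is an \emph{open} embedding; since $\cG^0_{\cL}$ is smooth, $\Spl(r';c_i')$ is smooth at $[S]$, i.e. $\Ext^2(S,S)=0$, which is exactly unobstructedness. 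For the case $\dim X\ge 2$ without the $H^2$ vanishing, unobstructedness can still be extracted from the computation in the proof of Theorem \ref{syz_open}: the obstruction map fits in the exact sequence $H^1(S^*)^{\oplus w}\to H^1(F\otimes S^*)\to H^2(S\otimes S^*)\to H^2(S^*)^{\oplus w}$, and one shows $\Ext^2(S,S)=H^2(S\otimes S^*)$ is governed by $H^2(S^*)$, which one bounds using $0\to F^*\to \cO_X^{\oplus w}\to S^*\to 0$.

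For the rigidity statement (now $\dim X\ge 3$), by Theorem \ref{syz_open} the open embedding $\alpha:\cG^0_V\to \Spl(r';c_i')$ gives $T_{[S]}\Spl(r';c_i')\cong T_{[(\cO_X(L),W)]}\cG^0_V$. Since $\cL=\{[\cO_X(L)]\}$ is a point and $\Pic(X)$ is discrete at it (as $H^1(\cO_X)=0$), the fiber $\cG^0_V$ over $[\cO_X(L)]$ is an open subset of the Grassmannian $Gr(w,H^0(\cO_X(L)))$, so $T_{[(\cO_X(L),W)]}\cG^0_V\cong \Hom(W, H^0(\cO_X(L))/W)$ by \eqref{tg-grass}. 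Therefore $\Ext^1(S,S)=0$ if and only if $W=H^0(\cO_X(L))$, i.e. $W$ is complete. The main obstacle I anticipate is precisely the bookkeeping of vanishing hypotheses: the corollary as stated only assumes $H^1(\cO_X)=0$ and $H^1(\cO_X(L))=0$, so one must either observe that $H^1(\cO_X(-L))$ (and, for $\dim X\ge 3$, also the relevant $H^2$) vanishes automatically under an implicit ampleness/twisting convention in force throughout the paper, or reorganize the argument so that only the genuinely needed vanishings are used; I would resolve this by citing Remark \ref{assumptions} and the standing convention that $[\cO_X(L)]\in U$ (resp.\ $\in V$ when $\dim X\ge 3$), which is exactly the hypothesis under which Theorems \ref{embedding} and \ref{syz_open} were proved.
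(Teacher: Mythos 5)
Your proposal follows essentially the same route as the paper, whose proof of this corollary is literally a one-line citation of Theorems \ref{embedding} and \ref{syz_open}: since $H^1(\cO_X)=0$ the base $\Pic^{c_1}(X)$ is reduced and $0$-dimensional at $[\cO_X(L)]$, so $\cG^0_U$ is an open subset of $Gr(w,H^0(\cO_X(L)))$, and the rigidity criterion and (for $\dim X\ge 3$) the unobstructedness follow from the open embedding exactly as you describe. Your concern about the unstated vanishings $H^1(\cO_X(-L))=0$ and $H^2(\cO_X(-L))=0$ is legitimate and matches how the paper itself deals with them (a later remark notes that for ample $L$ these follow from Kodaira vanishing); the only point where your argument, like the paper's terse one, is genuinely thin is unobstructedness for $\dim X=2$, where your proposed bound on $\Ext^2(S,S)$ via $H^2(S^*)$ and the sequence $0\to F^*\to \cO_X^{\oplus w}\to S^*\to 0$ only closes when $H^2(\cO_X)=0$, since $H^2(S^*)$ is in general a quotient of $W^*\otimes H^2(\cO_X)$.
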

\begin{proof} It immediately follows from Theorems  \ref{embedding} and \ref{syz_open}. This result was proven for $\dim X\ge 4$ in \cite{MS2}.
\end{proof}

More generally, our recursive structure can be carried over starting with  a  rigid (and in particular an exceptional) vector bundle on $X$. Indeed, we have:

\begin{corollary}\label{coro2} Let $X$ be a projective variety of dimension $n\ge 2$ such that  $H^1\cO_X  =
0$ and $F$ be a rigid bundle on $X$  with   $H^1(F)=H^1(F^*)=H^2(F^*)=0$ and generated by global sections. Let $W\subset H^0(F)$ a subspace  of dimension $w\ge \dim(X)+rk(F)$. Then the syzygy bundle $S$ associated to $(F,W) $ is unobstructed. Moreover, if $\dim X\ge 3$ then $S$ is rigid if and only if  $W=H^0(F)$.
\end{corollary}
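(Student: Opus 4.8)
The strategy is to reduce Corollary \ref{coro2} directly to the main theorems, exactly as Corollary \ref{coro1} reduces to them in the special case $F=\cO_X(L)$. First I would check that the hypotheses place $[F]$ in the open locus $U\subset\Spl(r;c_i)$: by assumption $F$ is a vector bundle, globally generated, and satisfies $H^1(F)=H^1(F^*)=0$, which are precisely the conditions defining $U$ in Proposition \ref{open}. Since $W\subset H^0(F)$ has dimension $w\ge \dim X+\rk F=n+r$ and $\eval_W$ is surjective (because $W$ must be chosen base-point-free for a generalized syzygy bundle to be defined — this is the running hypothesis, and I would state it explicitly), the pair $(F,W)$ is a point of $\cG^0_U$. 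By Proposition \ref{simple} the associated bundle $S=S_{F,W}$ is simple of rank $r'=w-r$ and Chern classes $c_i'$, so $[S]\in\Spl(r';c_i')$ is well-defined.

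Second, for unobstructedness I would invoke Theorem \ref{syz_open}: the extra hypothesis $H^2(F^*)=0$ together with $H^1(\cO_X)=0$ puts $[F]$ in the open subspace $V\subset U$, hence $(F,W)\in\cG^0_V$. Theorem \ref{syz_open} says $\alpha:\cG^0_V\to\Spl(r';c_i')$ is an open embedding; in particular it is smooth at $(F,W)$, so an obstruction space for $\Spl(r';c_i')$ at $[S]$ maps injectively... actually more directly: since $F$ is rigid and unobstructed (rigid plus $H^2(F^*\otimes F)$ — wait, we only know $H^1(F^*\otimes F)=0$ from rigidity; unobstructedness of $F$ is not assumed but also not needed). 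The cleanest route: $\alpha$ being an open embedding identifies a neighbourhood of $[S]$ in $\Spl(r';c_i')$ with a neighbourhood of $(F,W)$ in $\cG^0_V$; and $\cG^0_V=\pi^{-1}(V)$ is smooth over $V$ with Grassmannian fibres (the morphism $\pi$ is smooth), while $V$ is smooth at $[F]$ because $F$ is rigid — indeed $T_{[F]}V=\Ext^1(F,F)=H^1(F^*\otimes F)=0$, so $[F]$ is a reduced isolated point of $V$, in particular smooth. Hence $\cG^0_V$ is smooth at $(F,W)$, therefore $\Spl(r';c_i')$ is smooth at $[S]$, i.e. $S$ is unobstructed. (Equivalently one can argue $\Ext^2(S,S)=0$ directly, but going through the theorems is the intended proof.)

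Third, for the rigidity dichotomy when $\dim X\ge 3$: under the open embedding $\alpha$, the tangent space $T_{[S]}\Spl(r';c_i')\cong\Ext^1(S,S)$ is identified with $T_{(F,W)}\cG^0_V$. Since $\pi:\cG^0_V\to V$ is smooth with fibre over $[F]$ the open subset of the Grassmannian $Gr(w,H^0(F))$ parametrizing generating subspaces, and since $T_{[F]}V=0$, we get $T_{(F,W)}\cG^0_V\cong T_{[W]}Gr(w,H^0(F))\cong\Hom(W,H^0(F)/W)$ by \eqref{tg-grass}. This vanishes if and only if $W=H^0(F)$. Thus $S$ is rigid ($\Ext^1(S,S)=0$) exactly when $W=H^0(F)$, which is the claim. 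I should note that $\dim X\ge 3$ enters only to guarantee $V$ can be taken non-empty / that $[F]\in V$ is meaningful — here it is supplied directly by the hypothesis $H^2(F^*)=0$, so the argument is uniform; the role of $\dim X\ge 3$ in the statement is simply that for $\dim X=2$ the hypothesis $H^2(F^*)=0$ is restrictive, but the logical implication holds regardless.

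\textbf{Main obstacle.} The only subtle point is keeping straight which hypotheses are genuinely used versus which merely make $V$ non-empty in practice: the proof itself is a bookkeeping exercise assembling Propositions \ref{open}, \ref{simple}, \ref{tgt-quot} and Theorems \ref{embedding}, \ref{syz_open}. One should also make sure the running assumption that $\eval_W$ is surjective (base-point-freeness of $W$) is part of the statement — without it $S$ is not even defined — and that the Grassmannian fibre description of $\pi$ over the single point $[F]$ is legitimate, which follows since over a point $V=\{[F]\}$ the construction of $\cG_U$ specializes to the ordinary Grassmannian $Gr(w,H^0(F))$ and $\cG^0_U$ to its generating-subspaces open locus. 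Everything else is immediate from the quoted results.
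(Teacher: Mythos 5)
Your proposal is correct and follows exactly the route the paper intends: the paper's own proof is the one-line statement that the corollary is ``a direct application of Theorems \ref{embedding} and \ref{syz_open}'', and your write-up simply fills in that reduction (checking $(F,W)\in\cG^0_V$, transferring smoothness of $\cG^0_V$ at $(F,W)$ through the open embedding $\alpha$, and identifying $\Ext^1(S,S)$ with $\Hom(W,H^0(F)/W)$ via $T_{[F]}V=\Ext^1(F,F)=0$). Your side remarks --- that surjectivity of $\eval_W$ must be read as a running hypothesis, and that $\dim X\ge 3$ plays no logical role once $H^2(F^*)=0$ is assumed --- are both accurate.
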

\begin{proof} Again it is a direct application of Theorems  \ref{embedding} and \ref{syz_open}. 
\end{proof}

\begin{remark} \label{remcoro}
There is a wide range of projective varieties X satisfying that $H^1 \cO_X = 0$: projective spaces, hypersurfaces in $\PP^n$ with  $n\ge 4$, Grassmannians, flag varieties, complete smooth projective toric varieties, etc.
Therefore, the Corollaries \ref{coro1} and \ref{coro2} can be applied, for instance,  to the following cases:
\begin{enumerate}
    \item $X=\PP^n$, $n\ge 2$ and $F=\Omega^i_{\PP^n}(s)$ with $s\ge i+1$;
    \item $X=Q\subset \PP^n$, $n\ge 4$, a quadric hypersurface, $F=Spin\otimes \cO _Q(t)$ where $Spin $ is a Spinor bundle and $t\ge 0$ an integer.
    \item $X=Gr(k,n)$ and $F=\Sigma ^{\beta} \cQ\otimes \Sigma ^{\gamma}\cR\otimes \cO_X(t)$ where $\cQ$ and $\cR$ are the universal bundles on $X$ of rank $k+1$ and $n-k$, respectively; $\beta =(b_1,\dots ,b_{k+1})\in \ZZ^{k+1}$ and $\gamma =(a_1,\dots ,a_{n-k})\in \ZZ^{n-k}$ are two non-increasing sequences, $\Sigma ^{\beta}$ and $\Sigma ^{\gamma}$ are the Schur functors and $t\gg 0$.
    \item $X$ any smooth projective variety of dimension $n\ge 2$ satisfying that $H^1 \cO_X =
H^2\cO_ X  = 0$ and $F$ any exceptional bundle (i.e. $Hom(F,F)=\KK \id_X$ and $Ext^{i}(F,F)=0$ for $i>0$) on $X$ globally generated. 
\end{enumerate} 
\end{remark}

\subsection{Recursive construction of smooth rational moduli spaces}

In this subsection we assume that $\dim X\ge 3$ and $H^1(X,\cO_X)=0$. We will construct a recursive procedure to generate open subspaces in $\Spl(r;c_i)$ which are smooth, rational, quasiprojective schemes and have a universal bundle. We will then show that we can start the recursion based on the examples in the previous subsection. We keep the open subspace $V\subset \Spl(r;c_i)$ in the usual meaning.

\begin{proposition}
    Let $A\subset V$ be an open algebraic subspace that carries a universal family. Then \begin{enumerate}
        \item $\cG^0_A$ carries a universal family of syzygy bundles;
        \item if $A$ is smooth, then so is $\cG^0_A$;
        \item if $A$ is a quasiprojective scheme, then so is $\cG^0_A$;
        \item if $A$ is a quasiprojective, irreducible, smooth rational scheme, then so is $\cG^0_A$.
    \end{enumerate}
\end{proposition}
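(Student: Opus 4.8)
The plan is to realize $\cG^0_A$ concretely as an open subscheme of a Grassmann bundle over $A$ and then read off the four assertions. Since $A\subset V\subset U$ carries a universal family $\cF_A$ on $X\times A$, Proposition \ref{open} gives that $p_{A*}\cF_A$ is locally free of rank $v$ on $A$; I would set $\cG_A:=\mathrm{Gr}(w,p_{A*}\cF_A)$, the relative Grassmannian of rank-$w$ subbundles, with projection $\pi:\cG_A\to A$ and tautological subbundle $\cW\hookrightarrow\pi^*p_{A*}\cF_A\cong p_*\cF$, where $\cF$ is the pullback of $\cF_A$ to $X\times\cG_A$ and the last isomorphism holds by cohomology and base change. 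Let $\cG^0_A\subset\cG_A$ be the open locus where the adjoint map $p^*\cW\to\cF$ is surjective. First I would check that $\cG^0_A$ coincides with $\pi^{-1}(A)$ in the earlier notation: over $A$, which carries a universal family, the Grassmann bundle $\cG_U\to U$ is globally the relative Grassmannian $\cG_A$ constructed above (over $A$, unlike over all of $U$, the universal family lets us form the defining exact sequence of $\cS$ globally), and replacing $\cF_A$ by a twist $\cF_A\otimes p_A^*M$ changes neither $\cG_A$ nor $\cG^0_A$. In particular $\cG^0_A$ is a scheme.

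For (1), on $X\times\cG^0_A$ the sheaf $\cS:=\ker(p^*\cW\to\cF)$ is, by Proposition \ref{simple}, a flat family of simple vector bundles of rank $r'$ and Chern classes $c_i'$; I would show it is a universal family of syzygy bundles, in the same up-to-twist sense in which $A$ carries a universal family. This is precisely the reconstruction behind Theorem \ref{embedding}, carried out over an arbitrary base $B$: a family $\cE$ of syzygy bundles whose $F$-parts lie in $A$, parameterized by $B$, determines by dualizing and pushing forward (as in the proof of Proposition \ref{inj}) a family $\cH$ of bundles with fibers in $A$ together with a rank-$w$ subbundle $\cW'\subset p_{B*}\cH$; since $A$ carries a universal family, $\cH$ yields a morphism $B\to A$, well defined up to a line bundle on $B$ that does not move the resulting point of the Grassmann bundle, and $\cW'$ lifts it to a morphism $B\to\cG^0_A$. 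The two constructions are mutually inverse by the family version of Proposition \ref{inj} established in the proof of Theorem \ref{embedding}, and the choice of $\cF_A$ only affects $\cS$ by the pullback of a line bundle on $\cG^0_A$.

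Parts (2)--(4) I would then deduce as formal consequences of $\pi:\cG_A\to A$ being a Grassmann bundle and of $\cG^0_A$ being an open subscheme of $\cG_A$ (dense whenever nonempty). If $A$ is smooth, then $\pi$ smooth forces $\cG_A$, hence $\cG^0_A$, smooth, giving (2); this agrees with the earlier observation that $\cG^0_{\cL}$ is smooth if and only if $\cL\cap U$ is smooth. If $A$ is a quasiprojective scheme, then $\pi$ is projective (the relative Pl\"ucker embedding realizes $\cG_A$ as a closed subscheme of a projective bundle over $A$), so twisting a $\pi$-ample line bundle by the pullback of an ample line bundle on $A$ produces an ample line bundle on $\cG_A$; hence $\cG_A$, and its open subscheme $\cG^0_A$, are quasiprojective, giving (3). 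Finally, if $A$ is irreducible and rational, then $\cG_A$ is irreducible and, $p_{A*}\cF_A$ being Zariski-locally trivial, birational to $A'\times\mathrm{Gr}(w,\KK^v)$ for a dense open $A'\subset A$; both factors are rational, so $\cG_A$, and therefore its dense open $\cG^0_A$, are rational, which together with (2) and (3) gives (4).

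I expect the only step with genuine content to be (1): showing that the tautological family $\cS$ is universal up to twist amounts to transcribing the injectivity and reconstruction argument used for Theorem \ref{embedding} into families over a base carrying a universal family, where the care needed is in keeping track of the line-bundle ambiguity throughout. Parts (2)--(4) become routine once $\cG^0_A$ has been identified with an open subscheme of the Grassmann bundle $\cG_A$.
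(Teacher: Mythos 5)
Your proposal is correct and follows essentially the same route as the paper: the paper phrases the key step as pulling back $\cG_{\cU}$ along the section $s:A\to\cU$ of the gerbe determined by the universal family, which is exactly your global relative Grassmannian $\mathrm{Gr}(w,p_{A*}\cF_A)$, and parts (2)--(4) are then read off just as you do. Your part (1) is in fact more explicit than the paper's (which leaves the universality of $\cS$ implicit in the identification $\cG^0_A\cong s^*\cG^0_{\cU}$), but the reconstruction argument you invoke is the one already established for Theorem \ref{embedding}, so this is elaboration rather than a different method.
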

\begin{proof}
(1) A universal family on $A$ is equivalent to  a section $s:A\to \cU$ such that $\eps\circ s:A\to U$ is the inclusion. In particular, we can pullback via this section $\cG_{\cU}$ to $s^*\cG_{\cU}$. by the universal property of fibered products we get an isomorphism $s^*\cG_{\cU}\to \cG_A$, which induces an isomorphism $s^*\cG^0_{\cU}\to \cG^0_A$.
        
         (2) This follows from the smoothness of $\pi:\cG^0_U\to U$.
        
         (3) By construction $\cG^0_A$ is open in $s^*\cG_{\cU}$, and $s^*\cG_{\cU}$ is locally a Grassmann bundle over $\cU$; since $A$ is quasiprojective, it follows that $\cG_A\to A$ is globally a Grassmann bundle and thus quasiprojective.
        
        (4) The Grassmann bundle $\cG_A$ over $A$ is  quasiprojective, irreducible, smooth rational scheme because so is $A$; the result follows because $\cG^0_A$ is open in $\cG_A$.
\end{proof}

\begin{remark}
    The proposition gives us a recursive procedure because $\Syz^w_A$, the image of $\cG^0_A$ in $\Spl(r';c'_i)$, is a moduli space of simple bundles and after twisting we may assume that all its points satisfy the required vanishing. Note that the property of having a universal family is preserved by twisting.
\end{remark}

\begin{remark}
    To start the recursive construction we can use an ample line bundle $L$ which is generated by global sections and such that $H^1(L)=0$ (the other vanishings follow from Kodaira vanishing). We can also use any rigid simple bundle $F$ which is generated by global sections and such that $H^1(F)=H^1(F^*)=H^2(F^*)=0$; this can be obtained by sufficiently twisting any rigid simple vector bundle (eg, a syzygy bundle as shown in Corollary \ref{coro1} and Corollary \ref{coro2}).
\end{remark}

The case where $X$ is a Calabi-Yau 3-fold is particularly nice since the vanishing of $H^1(F)$ is equivalent to the vanishing of $H^2(F^*)$; in particular, all vanishings hold for an ample line bundle. 

\begin{corollary} Let $X$ be a projective Calabi-Yau 3-fold. Assume that $H^1(\cO_X)=H^2(\cO_X)=0$ (for instance, take $X$ a quintic 3-fold in $\PP^4$). Let $L$ be a very ample line bundle on $X$ such that the generic $w$-dimensional $W\subset H^0(\cO_X(L))$ is a base point free linear system. Then $\Syz^{w}\subset \Spl(r';c_i')$ is an open subspace, which is smooth, rational, quasiprojective of dimension $w(h^0(\cO_X(L))-w)$.
    \end{corollary}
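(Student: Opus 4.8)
The plan is to obtain this corollary as a direct specialization of the recursive construction already set up, namely the previous Proposition together with Theorems \ref{embedding} and \ref{syz_open}. First I would check that a very ample line bundle $L$ on a Calabi--Yau threefold $X$ satisfies all the hypotheses needed to apply our machinery: we have $H^1(\cO_X)=0$ by assumption, $L$ is globally generated since it is very ample, and the vanishings $H^1(L)=H^1(\cO_X(L))=0$ and $H^2(L^*)=H^2(\cO_X(-L))=0$ both hold --- for a Calabi--Yau threefold $\omega_X\cong\cO_X$, so Serre duality gives $H^2(\cO_X(-L))\cong H^1(\cO_X(L))^*$, and the latter vanishes by Kodaira vanishing (together with $H^1(\cO_X)=0$ and the long exact sequence, or directly from Kodaira--Nakano). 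Hence $[L]\in V$, where $V\subset\Spl(1;c_i)$ is the open subspace of Theorem \ref{syz_open}; since $\dim X=3\ge 3$, $V$ is non-empty and the point $A:=\{[L]\}$ is a smooth, irreducible, rational, quasiprojective (indeed a point!) open-in-$V$ subspace carrying the tautological universal family $L$.

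Next I would invoke the Proposition of this subsection with this $A$: it yields that $\cG^0_A$ is a smooth, irreducible, rational, quasiprojective scheme carrying a universal family of syzygy bundles. By construction $\cG^0_A$ is the open subscheme of the Grassmannian $Gr(w,H^0(\cO_X(L)))$ consisting of those $w$-dimensional subspaces $W$ for which $\mathrm{eval}_W$ is surjective, i.e. those $W$ that are base-point-free linear systems; this locus is non-empty precisely because the generic such $W$ is base-point-free, which is our hypothesis. Its dimension is therefore $\dim Gr(w,H^0(\cO_X(L)))=w(h^0(\cO_X(L))-w)$ by \eqref{tg-grass}.

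Finally, to identify $\Syz^w$ with an open subspace of $\Spl(r';c_i')$, I would apply Theorem \ref{syz_open}: since $A\subset V$, we have $\cG^0_A\subset\cG^0_V$, and $\alpha:\cG^0_V\to\Spl(r';c_i')$ is an open embedding; hence its restriction to $\cG^0_A$ is an open embedding, whose image is by definition $\Syz^w$ (or $\Syz^w_A$; with $A$ a single reduced point these coincide). Therefore $\Syz^w$ inherits from $\cG^0_A$ all the asserted properties: it is an open subspace of $\Spl(r';c_i')$, smooth, rational, quasiprojective, of dimension $w(h^0(\cO_X(L))-w)$. I do not expect a genuine obstacle here; the only points requiring a word of care are (i) correctly invoking the Calabi--Yau hypothesis to get $U=V$ for line bundles --- this is already noted right after Theorem \ref{syz_open} --- and (ii) making sure the generic-base-point-freeness hypothesis is exactly what guarantees $\cG^0_A\neq\varnothing$, so that the dimension statement is about a non-empty scheme. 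The quintic threefold in $\PP^4$ is the standard example where $H^1(\cO_X)=H^2(\cO_X)=0$, justifying the parenthetical remark.
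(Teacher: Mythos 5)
Your argument is correct and is exactly the paper's (implicit) proof: the corollary is stated there without its own proof, as a direct specialization of the preceding Proposition together with Theorem \ref{syz_open} and the observation that on a Calabi--Yau threefold $H^2(F^*)\cong H^1(F)^*$, which is precisely the route you take (including the identification of $\cG^0_A$ with the base-point-free locus in $Gr(w,H^0(\cO_X(L)))$ to get the dimension count). The only hypothesis defining $U$ that you do not explicitly verify is $H^1(L^*)=0$, but it follows from the same Serre duality plus Kodaira argument you already invoke, namely $H^1(\cO_X(-L))\cong H^2(\cO_X(L))^*=0$, so nothing is missing in substance.
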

It is worthwhile to point out that in this last case the moduli space is smooth in spite of the fact that $Ext^2(S,S)=Ext^1(S,S)\ne 0$ for any $S\in \Syz^{w}$. 

\subsection{What happens if we drop our assumptions?}
We now show that our assumptions are necessary, by explicitly discussing some examples where the syzygy bundle fails to be simple (because the hypothesis $H^1(F^*)=0$ is not fulfilled)  or some examples where $\alpha$ is not an open embedding when $\dim X=2$ or $H^2(F^*)\ne 0$. Let first observe that our assumptions are rather mild.

\begin{remark}\label{assumptions}
     For every line bundle $L$ and integer $N$,  tensoring with $\cO_X(NL)$ induces an isomorphism $$\Tw_N:Spl(r;c_{i})\cong Spl(r; \tilde { c_{i}})$$ which maps $F$ to $F(NL):=F\otimes \cO_X(NL)$; here $ \tilde { c_{i}}:=c_i(F(NL))$.

     For any sheaf $F$, if we choose $L$ ample and $N$ sufficiently big,  $F(NL)$ is generated by global sections and $H^1(F(NL))=H^1((F(NL))^*)=0$. So the assumption that $\cU$ be nonempty is not very restrictive, since the theorem A and B of Serre guarantees that for any ample line bundle $L$ on $X$, there always exists an integer $N_0$ such that for all $N\ge N_0$,   $F (NL)$ is generated by global sections
    and $H^1(F(NL))=H^1(F(NL)^*)=0.$ 
    
     In general, the moduli space $\Spl(r;c_i)$ is not of finite type (i.e., quasi-compact), and it usually contains sheaves which are not vector bundles. This argument shows that there are no other limitations to applying our construction if we twist enough. That is, for every $U\subset \Spl(r;c_i)$ open quasi-compact subspace of $\Spl(r;c_i)$ containing only vector bundles, there is an $N_0$ such that twisting for any $N\ge N_0$ and letting $v:=\chi(F(N))$ for any $F\in \Spl(r;c_i)$, we have that $\tilde U\subset \Spl(r;\tilde{c_i})$ contains $\Tw(U)$.
\end{remark}

Let us now give an example which shows that the hypothesis  $H^1(F^*)=0$ is necessary to guarantee the simpleness of the generalized syzygy bundle.

\begin{example} \label{exp2} Fix $X=\PP^2$ and $M:=M(2;4,12)$ the moduli space of rank 2 stable vector bundles $F$ on $\PP^2$ with Chern classes $(c_1,c_2)=(4,12)$. A general vector bundle $F\in M$  has a resolution \cite[Theorem 4.1]{DM}:
\begin{equation}
\label{defF}
0 \to \cO_{\PP^2} (-2)^2\to \cO_{\PP^2}^4 \to F\to 0. 
\end{equation}
The bundle $F$ is generated by global sections, $H^0(F)=\KK^4$ and, since stability implies simpleness, we have that $F$ is simple.  The generalized syzygy bundle  $S$ associated to $(F,W=H^0(F))$ splits: $S=\cO_{\PP^2}(-2)^2$ and therefore $S$ is not simple ($H^0(S\otimes S^*)=\KK^4$). Let us check that $H^1(F^*)\ne 0$. If we dualize the exact sequence (\ref{defF}) we get:
$$
0\to F^* \to \cO_{\PP^2}^4 \to \cO(2)_{\PP^2}^2 \to 0.
$$
Taking cohomology we get
$$
0\to H^0(F^*) \to H^0(\cO _{\PP^2})^4\cong \KK^4 \to H^0(\cO _{\PP^2}(2))^2 \cong \KK^{12} \to H^1(F^*)
$$
and we conclude that $H^1(F^*)\ne 0.$
\end{example}

The assumption $H^1(\cO_X)=0$ is necessary for the validity of Theorem \ref{syz_open}, as shown by the following result.
We remark that for any choice of $c_1$, the moduli $\Spl(1;c_1,0,\ldots,0)$ is canonically isomorphic to $\Pic^{c_1}(X)$.

\begin{lemma}\label{dim3_1}
    Let $\dim X\ge 3$, $r=1$, and $c_1$ a very ample class such that $c_1+c_1(X)$ is also ample. Assume that $H^1(X,\cO_X)\ne 0$. Then for $F\in \Pic^{c_1}(X)$ and $W\subset H^0(F)$ generating $F$, the differential map  $d\alpha :   T_{[(F , W)]}\cG_U^0  \to T_{[S]}(\Spl(r';c_{i}')) 
$ is not surjective. 
\end{lemma}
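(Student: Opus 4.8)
The plan is to work with the explicit description of $d\alpha$ obtained in the proof of Proposition \ref{dif_inj}. There $d\alpha$ is identified, after quotienting the source by $W\otimes W^*$, with the connecting homomorphism $\delta$ in the long exact cohomology sequence of (\ref{tensor}),
\[
0\to S\otimes S^*\to W\otimes S^*\xrightarrow{\ \eval_W\otimes\id_{S^*}\ }F\otimes S^*\to 0 .
\]
Since $d\alpha$ and $\delta$ have the same image, $\coker(d\alpha)\cong\coker(\delta)\cong\ker\psi$, where $\psi\colon W\otimes H^1(S^*)=H^1(W\otimes S^*)\to H^1(F\otimes S^*)$ is the next map after $\delta$ in that sequence, i.e.\ the map on $H^1$ induced by $\eval_W\otimes\id_{S^*}$. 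So it is enough to exhibit a nonzero element of $\ker\psi$. (I would also remark at the outset that the hypotheses on $c_1$ serve only to make the statement non-vacuous: $c_1$ very ample gives global generation, and, together with the ampleness of $c_1+c_1(X)$, Kodaira vanishing yields $H^1(F)=H^1(F^*)=0$, so that $F\in U$ and $(F,W)\in\cG^0_U$; also $F\neq\cO_X$ since $c_1$ is ample.)

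The element I would use is the inclusion $j\colon S\hookrightarrow W\otimes\cO_X$, viewed as a nonzero class $j\in H^0(W\otimes S^*)$; equivalently, $j$ is the image of $\id_S$ under $H^0$ of the injection $S\otimes S^*\hookrightarrow W\otimes S^*$. Fix $0\neq\eta\in H^1(X,\cO_X)$ and set $\zeta:=j\cup\eta\in H^1(W\otimes S^*)=W\otimes H^1(S^*)$. By naturality of cup product, $\psi(\zeta)$ equals $(\text{image of }j\text{ in }H^0(F\otimes S^*))\cup\eta$, and the image of $j$ in $H^0(F\otimes S^*)$ is $0$ because $\eval_W\circ j=0$ (two consecutive maps in (\ref{exact1})); hence $\zeta\in\ker\psi$. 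To see $\zeta\neq 0$, write $\zeta=(\id_W\otimes(-\cup\eta))(j)$ with $-\cup\eta\colon H^0(S^*)\to H^1(S^*)$; since $j\neq 0$, it suffices that $-\cup\eta$ be injective. This I would get from the dual sequence (\ref{exact2}), $0\to F^*\to W^*\otimes\cO_X\xrightarrow{\beta}S^*\to 0$: since $H^0(F^*)=0$ (Lemma \ref{aux}, $F$ a nontrivial globally generated line bundle) and $H^1(F^*)=0$, the long exact sequence shows that $\beta$ induces an isomorphism $W^*\xrightarrow{\sim}H^0(S^*)$ and an injection $H^1(\beta)\colon W^*\otimes H^1(\cO_X)\hookrightarrow H^1(S^*)$; by naturality of cup product, $-\cup\eta\colon H^0(S^*)\to H^1(S^*)$ is then the composition of $w^*\mapsto w^*\otimes\eta$ (injective as $\eta\neq 0$) with $H^1(\beta)$, hence injective. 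Therefore $0\neq\zeta\in\ker\psi$, and $d\alpha$ is not surjective; in fact $\eta\mapsto j\cup\eta$ embeds $H^1(\cO_X)$ into $\ker\psi\cong\coker(d\alpha)$, so $\dim\coker(d\alpha)\ge h^1(\cO_X)$.

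There is no genuine obstacle here: the argument is a diagram chase with the two short exact sequences (\ref{exact1}) and (\ref{exact2}), the vanishings $H^0(F^*)=H^1(F^*)=0$, and naturality of cup product. The only point requiring care is the bookkeeping that identifies the relevant connecting and restriction maps with cup products --- concretely, that $\psi(j\cup\eta)=(\text{image of }j)\cup\eta$ and that $-\cup\eta$ on $H^0(S^*)$ factors through the injective map $H^1(\beta)$ --- after which both $\psi(\zeta)=0$ and $\zeta\neq 0$ follow at once.
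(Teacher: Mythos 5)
Your proof is correct, but it takes a genuinely different route from the paper's. The paper argues by a dimension count: using the vanishings $H^{i}(F)=0$ for $i>0$ it computes $\dim H^0(S^*\otimes F)/W\otimes W^*=w(h^0(F)-w)+h^1(\cO_X)-1$, then bounds $\dim H^1(S\otimes S^*)\ge w\,h^1(S)=w(h^0(F)-w)+w^2h^1(\cO_X)$ via the sequence $0\to S\otimes F^*\to W^*\otimes S\to S\otimes S^*\to 0$, and concludes that the target is strictly bigger than the source, so no surjection can exist. You instead produce an explicit nonzero class in $\coker(d\alpha)\cong\ker\psi$, namely $j\cup\eta$ with $j$ the inclusion $S\hookrightarrow W\otimes\cO_X$ and $0\ne\eta\in H^1(\cO_X)$; the verification that $\psi(j\cup\eta)=0$ and that $j\cup\eta\ne 0$ (via the injectivity of $-\cup\eta$ on $H^0(S^*)$, deduced from $H^0(F^*)=H^1(F^*)=0$) is sound, and your identification of $\coker(d\alpha)$ with $\ker\psi$ matches the description of $d\alpha$ used in Proposition \ref{dif_inj}. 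What each approach buys: the paper's count is shorter and in fact yields the stronger numerical bound $\dim\coker(d\alpha)\ge(w^2-1)h^1(\cO_X)+1$, but it depends on getting all the dimensions exactly right (including the not-entirely-obvious injectivity of $W^*\otimes H^1(S)\to H^1(S\otimes S^*)$); your argument is more structural, pinpointing $H^1(\cO_X)$ as an explicit source of non-surjectivity via cup product, which makes the role of the hypothesis $H^1(\cO_X)\ne 0$ transparent, at the cost of a weaker bound $\dim\coker(d\alpha)\ge h^1(\cO_X)$.
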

\begin{proof} Notice that since $c_1$ a very ample class and  $c_1+c_1(X)$ is  ample, we have  $H^i(F)=0$ for every $i>0$ and every $F$ in $\Pic^{c_1}(X)$. Let us  consider the exact sequence
\begin{equation}\label{defS}
0\to S\to W\otimes \cO_X\to F\to 0.   
\end{equation}
To prove that the differential map $d\alpha$ is not surjective it is enough to check that $\dim H^1(S\otimes S^*)>\dim H^0(S^*\otimes F)/W\otimes W^*.$ To this end, we dualize the exact sequence (\ref{defS}), we tensor with $F$, we take cohomology and we get
$$
0\to H^0(\cO_X)\to W^*\otimes F \to H^0(S^*\otimes F)\to H^1(\cO_X)\to 0.
$$
Therefore, we have $h^0(S^*\otimes F)= wh^0(F)+h^1(\cO_X)-1$ and, hence, we obtain $\dim H^0(S^*\otimes F)/W\otimes W^*=
w(h^0(F)-w)+h^1(\cO_X)-1.$ 

    Let us know compute $\dim H^1(S\otimes S^*)$. First of all, using the exact cohomology sequence associated to (\ref{defS}) we get 
    $$h^1(S)=h^0(F)-w+wh^1(\cO_X).$$ From the exact sequence
    $$
    0\to S\otimes F^*\to W^*\otimes S\to S\otimes S^*\to 0
    $$
    we get the exact sequence
    $$
    0\to W^*\otimes H^1(S)\to H^!(S\otimes S^*)  \to H^2(S\otimes F^*)\to W^*\otimes H^2(S) 
    $$
which gives us $$\begin{array}{rcl} \dim H^1(S\otimes S^*)& \ge &  wh^1(S) \\ 
& = & w(h^0(F)-w)+w^2h^1(\cO_X)) \\ & > & \dim H^0(S^*\otimes F)/W\otimes W^*
\end{array}$$ which proves what we want    
\end{proof}
\begin{lemma} \label{dim3_2} Let $\dim X> 3$, $r=1$, and $c_1$ a very ample class such that $c_1+c_1(X)$ is also ample. Assume that $H^1(X,\cO_X)\ne 0$ and $H^2(\cO_X)=0$. Then for $F\in \Pic^{c_1}(X)$ and $W\subset H^0(F)$ generating $F$,
    the image of $\alpha$ is contained in the smooth locus of  moduli $\Spl(r'; c_i')$. 
\end{lemma}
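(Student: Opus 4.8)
The plan is to prove the statement by showing that $\Spl(r';c_i')$ is unobstructed at every point in the image of $\alpha$. Fix $F\in\Pic^{c_1}(X)$ and a generating subspace $W\subset H^0(F)$, so that $(F,W)$ is a point of $\cG^0_U$ and $S$ is the associated generalized syzygy bundle, sitting in the sequence (\ref{exact1}). Since $S$ is locally free, Proposition \ref{tgt-obs}(2) identifies an obstruction space for $\Spl(r';c_i')$ at $[S]$ with $\Ext^2(S,S)\cong H^2(X,S^*\otimes S)$, and (as in the remark following that proposition) the vanishing of this obstruction space forces $\Spl(r';c_i')$ to be smooth at $[S]$. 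So it suffices to prove $H^2(X,S^*\otimes S)=0$. I note in advance that the hypothesis $H^1(\cO_X)\ne 0$ will not be used: it is present only so that the statement can be read alongside Lemma \ref{dim3_1}.

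First I would record the relevant vanishings for $F$ and $F^*$. Exactly as in the proof of Lemma \ref{dim3_1}, the hypotheses that $c_1$ be very ample and that $c_1+c_1(X)$ be ample give $H^i(F)=0$ for all $i>0$; dually, $F^*=\cO_X(-c_1)$ has ample dual, so Serre duality combined with Kodaira vanishing for $\omega_X\otimes\cO_X(c_1)$ gives $H^i(F^*)=0$ for all $0\le i<\dim X$. In particular $H^1(F^*)=0$, which (with the above) guarantees $(F,W)\in\cG^0_U$ and, by Proposition \ref{simple}, that $S$ is simple; and, crucially, $H^3(F^*)=0$, which is exactly where the strict inequality $\dim X>3$ is used.

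Next I would extract two intermediate vanishings for $S$. From the long exact cohomology sequence of (\ref{exact1}), the segment $H^1(F)\to H^2(S)\to W\otimes H^2(\cO_X)\to H^2(F)$ has both ends zero, using $H^i(F)=0$ for $i>0$ and the hypothesis $H^2(\cO_X)=0$; hence $H^2(S)=0$. Tensoring (\ref{exact1}) with $F^*$ and using $F\otimes F^*\cong\cO_X$ (here $r=1$) yields the sequence (\ref{exact4}), whose long exact sequence contains $H^2(\cO_X)\to H^3(S\otimes F^*)\to W\otimes H^3(F^*)$; the left term vanishes by $H^2(\cO_X)=0$ and the right by $H^3(F^*)=0$, so $H^3(S\otimes F^*)=0$. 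Finally, the sequence (\ref{exact3}) — obtained by dualizing (\ref{exact1}) and tensoring with $S$ — has long exact sequence containing $W^*\otimes H^2(S)\to H^2(S^*\otimes S)\to H^3(S\otimes F^*)$, and the two neighbouring groups were just shown to vanish. Therefore $H^2(S^*\otimes S)=0$, hence $\Ext^2(S,S)=0$ and $\Spl(r';c_i')$ is smooth at $[S]$, which is the assertion.

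I do not expect a deep obstacle: the argument is a short cohomology chase bounded by the two exact sequences built from (\ref{exact1}). The one step that genuinely requires care is the vanishing $H^3(F^*)=0$ — this is precisely what forces the stronger hypothesis $\dim X>3$ (rather than $\dim X\ge 3$) and it rests on the same vanishing theorem already invoked in Lemma \ref{dim3_1} — together with the bookkeeping needed to check that the hypothesis $H^2(\cO_X)=0$ is applied in exactly the two places where it is actually needed.
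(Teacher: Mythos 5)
Your proof is correct, and it reaches the same reduction as the paper (namely $\Ext^2(S,S)=H^2(S\otimes S^*)=0$ via Proposition \ref{tgt-obs}) but through the ``transposed'' decomposition. The paper tensors the defining sequence (\ref{exact1}) with $S^*$, obtaining $0\to S\otimes S^*\to W\otimes S^*\to F\otimes S^*\to 0$ as in (\ref{tensor}), and so reduces to the two vanishings $H^1(S^*\otimes F)=0$ and $H^2(S^*)=0$, which it then extracts from the dual sequence (\ref{exact2}) tensored with $F$ and with $\cO_X$ respectively. You instead tensor the dual sequence with $S$, i.e.\ you use (\ref{exact3}), reducing to $H^2(S)=0$ and $H^3(S\otimes F^*)=0$, which you get from (\ref{exact1}) and (\ref{exact4}). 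The two routes are mirror images of one another and consume exactly the same input vanishings in the end: $H^i(F)=0$ for $i>0$, $H^2(\cO_X)=0$ (used twice in each argument), and $H^3(F^*)=0$, the last being where $\dim X>3$ enters in both proofs (in the paper it sits at the end of $W^*\otimes H^2(\cO_X)\to H^2(S^*)\to H^3(F^*)$, in yours at the end of $H^2(\cO_X)\to H^3(S\otimes F^*)\to W\otimes H^3(F^*)$). Your version is slightly more explicit about why $H^3(F^*)=0$ holds and about the fact that $H^1(\cO_X)\ne 0$ plays no role in this lemma, both of which the paper leaves implicit; neither approach buys anything substantive over the other.
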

\begin{proof}
    By Proposition \ref{tgt-obs} we only need to check that $Ext^2(S,S)=0$ and, using the exact sequence
    $$
    0\to S\otimes S^*\to W\otimes S^*\to F\otimes S^*\to 0
    $$
    it suffices to see that $H^1(S^*\otimes F)=H^2(S^*)=0$. From the exact sequence
    $$
    0\to F^*\to W^*\otimes \cO_X \to S^*\to 0
    $$
    we get $H^2(S^*)=0  $ and from 
    $$
    0\to F^*\otimes F\to W^*\otimes F \to S^*\otimes F\to 0
    $$
    we deduce that $H^1(S^*\otimes F)=0$ which concludes the proof.
\end{proof}

In next example, we illustrate by means of a concrete example that without the hypothesis $H^2(F^*)=0$ the morphism $\alpha $ is no longer an open  embedding. We show how in the particular case $X=\PP^2$ and $F$ a line bundle all construction can be explicitly described and the normal bundle to the syzygy locus can be computed.

\begin{example} We fix $X=\PP^2$, $F=\cO _{\PP^2}(3)$ and $w=3$; and we will construct $\Syz^3$ inside $\Spl(2;-3,9)\cong M(2;-3,9)$ (in $\PP^2$, the moduli of rank 2 simple bundles and the moduli space of rank 2 stable bundles coincide).
Consider $U\subset Gr(3,H^0(\cO_{\PP^2}(3)))$ the open subset such that for any $W\in U$ the evaluation map $W\otimes \cO_{\PP^2}\to \cO_{\PP^2}(3)$ is surjective, i.e. $U=\{\ (f_1,f_2,f_3)\in H^0(\cO_{\PP^2}(3)) \text{ without common zeros } \}$. For any $W\in U$  we have the associated syzygy bundle $S$ which fits in the exact sequence
\begin{equation}\label{auxaux}
0 \to S \to W\otimes \cO_{\PP^2} \to \cO_{\PP^2}(3) \to 0.
\end{equation}
Moreover we know:
\begin{itemize}
\item[(i)] $S\in Spl(2;-3,9)$.
\item[(ii)] $Ext^2(S,S)=H^2(S\otimes S^*)=0$ and, hence $S$ is unobstructed,
\item[(iii)] $\dim Syz^3(2;-3,9)=\dim Gr(3, H^0(\cO_{\PP^2}(3)))=21$.
\end{itemize}
\noindent {\bf Claim:} The fiber of the conormal bundle of $Syz^3(2,-3,9)$ in $\Spl(2;-3,9)$ at the point $[S]$ is isomorphic to $H^2(S(-3))$. So, the codimension of $Syz(2;-3,9)$ in $\Spl(2;-3,9)$ is $h^2(S(-3))=3$.

\vskip 2mm
\noindent {\bf Proof of the Claim:} Since $Ext^2(S,S)=0$ and $Hom(S,S)=\KK$ we have $$\begin{array}{rcl} \dim_{[S]}Spl(2;-3,9) & =  & \dim Ext^1(S,S) \\
& = & 1-\chi (S\otimes S^*)\\
& = & 24.
\end{array}
$$
On the other hand, we dualize the exact sequence (\ref{auxaux}), we twist it with $S$, we take cohomology and we obtain:
$$
0\to H^0(S\otimes S^*)\to H^1(S(-3))\to W^*\otimes H^1(S)\to H^1(S\otimes S^*)\to H^2(S(-3))\to 0.
$$
This last exact sequence splits into
$$
0\to \KK\cong H^0(S\otimes S^*)\to H^1(S(-3))\cong \KK \to 0, \text{ and }$$
$$
0\to W^*\otimes H^1(S)\to H^1(S\otimes S^*)\to H^2(S(-3))\to 0 $$
which proves our claim.

Let us describe using Serre's correspondence first a general point $S$ in the moduli space $\Syz^3$ and then a general point $E$ in $\Spl(2;-3,9)$. If $S$ is a point in $\Syz^3$, then $h^0(S^*)=3$ and $H^0(S^*(-1))=0$. Therefore, a general section $s\in H^0S^*$ vanishes in codimension 2 and gives rise to an exact sequence
$$
0\to \cO_{\PP^2}\to S^*\to {\mathcal I}_Z(3)\to 0
$$
where $Z\subset \PP^2$ is a 0-dimensional subscheme of length 9 complete intersection of 2 plane cubics. Indeed, $h^0(S^*)=3$ implies that $h^0({\mathcal I}_Z(3))=2$. To construct a general bundle $E\in \Spl(2;-3,9)$ we take a general set $Y\subset \PP^2$ of 9 different points (in particular, not a complete intersection) and   a non-zero extension $0\ne e\in Ext^1({\mathcal I}_Y(3),\cO_{\PP^2})$. Via Serre's construction we build a rank 2 vector bundle $E^*$ on $\PP^2$ which sits in the exact seqeunce:
$$
0\to \cO_{\PP^2}\to E^*\to {\mathcal I}_Y(3)\to 0.
$$
Therefore, $E$ is a rank 2 stable (hence, simple) vector bundle with Chern classes $(-3,9)$, i.e. $E\in \Spl(2;-3,9)$. By assumption $h^0(\mathcal{I}_Y(3))=1$, hence $h^0(E^*)=2$.

From the exact sequence \[ 0\to \mathcal{I}_Y\to \cO_{\PP^2}\to \cO_Y\to 0
\]
it follows that $\dim Ext^1({\mathcal I}_Y(3),\cO_{\PP^2})=\dim H^1(\mathcal{I}_Y)=8$; hence
 the family of bundles we have just constructed has dimension $\dim \Hilb^9(\PP^2)-\dim H^0(E^*)+\dim Ext^1({\mathcal I}_Y(3),\cO_{\PP^2})= 24.$ Therefore, $E$ lies in an open dense subset of $Spl(2;-3,9)$.

Notice that the main difference between a general syzygy bundle $S\in Syz^3(2;-3,9)$ and a rank 2 simple bundle $E\in \Spl(2;-3,9)$ lies in the fact that in the first case the associated 0-dimensional scheme is a complete intersection of 2 plane cubics while in the second case is a general set of 9 points in $\PP^2$.
\end{example}

\subsection{Very generalized syzygy bundles}\label{very_gen}

We now want to show that if we generalize further our definition of syzygy bundles, by allowing kernels of surjective morphisms $E\to F$, where $E$ is a rank $w$ vector bundle which is a deformation of $\cO^{\oplus w}_X$, then we get an \'etale morphism to the moduli (although possibly not an injective one).

Let $\VB_X(w;0)$ be the algebraic stack parametrizing all vector bundles on $X$ of rank $w$ and Chern classes zero; it contains $\cO_X^w$. We consider now the moduli stack $\GSyz:=\GSyz^w(r;c_i)$ parametrizing all surjective morphisms $\phi:E\to F$ where $E\in \VB_X(w;0)$, $F\in \cU$,  $H^i(E\otimes F^*)=0$ for $i=0,1,2$ and $S:=\ker \phi$ is a simple vector bundle. Note that this is open in the relative quotient stack of the universal bundle on $\VB_X(w;0)$.

Over $X\times \GSyz$ we have a universal sequence of vector bundles \[
0\to \cS \to \cE\to \cF\to 0
\]
where $\cE$ (resp~$\cF$) is the pullback of the universal bundles from $X\times VB_X(r;0)$ (resp.~$X\times \cU$). The bundle $\cS$ defines a morphism $$\lambda:\GSyz\to \Spl(r';c_i').$$ We want to show that $\lambda$ is an \'etale morphism of algebraic stacks. We define a complex $A\in C^{0,1}(\Coh(X))$ as follows: \[
A:=[E^*\otimes E\oplus F^*\otimes F\to E^*\otimes F ]
\]
where the morphism is $\phi \circ \cdot$ on $E^*\otimes E$ and $-(\cdot\circ \phi)$ on $F^*\otimes F$.

We also define a complex $B\in C^{-1,1}(\Coh(X))$ as follows: \[
B:=[F^*\otimes E\to E^*\otimes E\oplus F^*\otimes F\to E^*\otimes F ]
\]
where the first morphism is $(-\cdot\circ \phi,\phi \circ \cdot)$ and the second is the same as in $A$.
There is an exact sequence of complexes in $C^b(\Coh(X))$:
\begin{equation}\label{ex-compl}
    0\to A\to B \to (F^*\otimes E)[1]\to 0.
\end{equation}

\begin{lemma}
    For every $i\ge 0$, there is a natural isomorphism $\HH^i(B)\to H^i(S^*\otimes S)$.
\end{lemma}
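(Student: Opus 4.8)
The plan is to recognize $B$ as the (sheafy) endomorphism complex of a two-term locally free resolution of $S$, and then use that such an endomorphism complex depends, up to natural quasi-isomorphism, only on the quasi-isomorphism class of the resolution.

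First I would set $K^\bullet:=[E\xrightarrow{\phi}F]$, the complex of vector bundles with $E$ in cohomological degree $0$ and $F$ in degree $1$. A direct inspection of the definitions shows that, up to signs (which are irrelevant for cohomology), $B$ is exactly the total Hom-complex $\mathcal{H}om^\bullet(K^\bullet,K^\bullet)$: its degree $-1$ term is $\mathcal{H}om(F,E)=F^*\otimes E$, its degree $0$ term is $\mathcal{H}om(E,E)\oplus\mathcal{H}om(F,F)=E^*\otimes E\oplus F^*\otimes F$ (this is the middle of $A$, as in Corollary \ref{def-triple}), its degree $1$ term is $\mathcal{H}om(E,F)=E^*\otimes F$, and the differentials $f\mapsto(\pm f\circ\phi,\phi\circ f)$ and $(\alpha,\beta)\mapsto\phi\circ\alpha-\beta\circ\phi$ are precisely those describing $B$.

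Next, since $\phi$ is surjective with kernel $S$, the inclusion of $S$ into $E$ in degree $0$ is a morphism of complexes $q:S[0]\to K^\bullet$ whose mapping cone is $[S\to E\to F]$ in degrees $-1,0,1$; this is acyclic because $0\to S\to E\to F\to 0$ is exact, so $q$ is a quasi-isomorphism. Applying $\mathcal{H}om^\bullet(K^\bullet,-)$ yields $\mathcal{H}om^\bullet(K^\bullet,q):\mathcal{H}om^\bullet(K^\bullet,S[0])\to\mathcal{H}om^\bullet(K^\bullet,K^\bullet)=B$, whose mapping cone is $\mathcal{H}om^\bullet(K^\bullet,[S\to E\to F])$; this is acyclic because the Hom-complex of a bounded complex of vector bundles into a bounded acyclic complex of vector bundles is acyclic (such an acyclic complex splices locally split short exact sequences of vector bundles). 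Hence $\mathcal{H}om^\bullet(K^\bullet,q)$ is a quasi-isomorphism. On the other hand $\mathcal{H}om^\bullet(K^\bullet,S[0])$ is the two-term complex $[F^*\otimes S\to E^*\otimes S]$ in degrees $-1,0$, and dualizing $0\to S\to E\to F\to 0$ and tensoring by the vector bundle $S$ gives $0\to F^*\otimes S\to E^*\otimes S\to S^*\otimes S\to 0$; thus the projection onto $H^0$ is a quasi-isomorphism $\mathcal{H}om^\bullet(K^\bullet,S[0])\to(S^*\otimes S)[0]$. The composite of $\HH^i$ of the inverse of $\mathcal{H}om^\bullet(K^\bullet,q)$ with $\HH^i$ of this projection gives the desired map $\HH^i(B)\to\HH^i((S^*\otimes S)[0])=H^i(X,S^*\otimes S)$, which is an isomorphism for every $i$ (indeed for all $i\in\ZZ$, with no vanishing hypotheses needed).

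Finally, for naturality I would observe that all the ingredients — the complexes $K^\bullet$ and $B$, the kernel $S$, the map $q$, and the two quasi-isomorphisms — are functorial in the triple $(E,F,\phi)$ and commute with arbitrary base change, since all the sheaves in sight are locally free, so kernels, duals and tensor products are preserved; hence the resulting isomorphism globalizes over $X\times\GSyz$. The only step deserving real care is the homological lemma invoked above, that $\mathcal{H}om^\bullet$ of a bounded complex of vector bundles against a bounded acyclic complex of vector bundles is acyclic; this is standard, reducing to the remark that a bounded acyclic complex of vector bundles is built by splicing locally split short exact sequences of vector bundles, each of which stays exact after applying $\mathcal{H}om(V,-)$ for any vector bundle $V$. (The vanishing hypotheses defining $\GSyz$ are not used here; they will enter only afterwards, when $\HH^i(B)$ is compared with $\HH^i(A)$ through the exact sequence (\ref{ex-compl}).)
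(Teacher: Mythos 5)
Your proof is correct and is essentially the paper's argument in dual form: the paper identifies $B$ with the endomorphism complex of the locally free resolution $[F^*\to E^*]$ of $S^*$ and quotes the standard fact that this computes $\Ext^i(S^*,S^*)=H^i(S^*\otimes S)$, while you identify $B$ with $\mathcal{H}om^\bullet(K^\bullet,K^\bullet)$ for $K^\bullet=[E\to F]\simeq S[0]$ and then spell out the quasi-isomorphism invariance via mapping cones. The extra care you take (locally split splicing, naturality under base change, and the observation that no vanishing hypotheses are needed) is a welcome filling-in of what the paper treats as standard, but it is not a different method.
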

\begin{proof}
 We strongly use that if $E^\bullet$ is a locally free resolution of a coherent sheaf $F$, then $\HH^i((E^\bullet)^*\otimes E^\bullet)$ is naturally isomorphic to $\Ext^i(F,F)$. The complex $B':=[F^*\to E^*]\in C^{-1,0}(\Coh(X))$ is a locally free resolution of $S^*$, and $B$ is the complex associated to the double complex $B_0\otimes B_0^*$.
Hence $\HH^i(B)=\Ext^i(S^*,S^*)=H^i(S^*\otimes S)$.
\end{proof}

Let $\phi:E\to F$ on $X$ be a point $p$ in $\GSyz$. By Corollary \ref{def-triple}, for $i=0,1,2$ we have $T^i_p\GSyz=\HH^i(X,A)$.

\begin{proposition}
    The morphism $\lambda:\GSyz\to \Spl(r';c_i')$ is representable and \'etale.
\end{proposition}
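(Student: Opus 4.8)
The plan is to deduce that $\lambda$ is \'etale from the infinitesimal criterion of Proposition~\ref{criterion_smoothness}, after first disposing of representability by a direct automorphism computation. For representability it is enough to show that $\lambda$ is faithful on automorphisms, i.e.\ that for every point $p=[\phi\colon E\to F]$ of $\GSyz$ the map $\Aut(p)\to\Aut(S)$ sending a pair $(a,b)\in\Aut(E)\times\Aut(F)$ with $\phi a=b\phi$ to the induced automorphism $a|_S$ is injective, and the same computation applies over an arbitrary base. Indeed, since $F$ is simple, $b$ is a scalar; if moreover $a|_S=\id_S$, then $a-\id_E$ kills $S=\ker\phi$ and hence factors through a morphism $F\to E$, which must vanish because $\operatorname{Hom}(F,E)=H^0(X,E\otimes F^*)=0$ is one of the defining vanishings of $\GSyz$. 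Thus $a=\id_E$, and then $b=\id_F$ since $\phi$ is an epimorphism.

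For \'etaleness I would combine three ingredients that are already available: by Corollary~\ref{def-triple} the tangent and obstruction spaces of $\GSyz$ at $p$ are $\HH^1(X,A)$ and $\HH^2(X,A)$; by Proposition~\ref{tgt-obs} together with the preceding Lemma, those of $\Spl(r';c_i')$ at $[S]$ are $\HH^1(X,B)\cong\Ext^1(S,S)$ and $\HH^2(X,B)\cong\Ext^2(S,S)$; and $d\lambda$, $o(\lambda)$ are the maps induced on hypercohomology by the inclusion of complexes $A\hookrightarrow B$. Applying $\HH^\bullet(X,-)$ to the exact sequence~\eqref{ex-compl} and using that $(F^*\otimes E)[1]$ is concentrated in cohomological degree $-1$, so that $\HH^j(X,(F^*\otimes E)[1])=H^{j+1}(X,E\otimes F^*)$ vanishes for $j=-1,0,1$ by the hypothesis $H^i(X,E\otimes F^*)=0$ for $i=0,1,2$, the long exact sequence gives that $\HH^1(X,A)\to\HH^1(X,B)$ is an isomorphism and $\HH^2(X,A)\to\HH^2(X,B)$ is injective. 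Hence $d\lambda$ is bijective and $o(\lambda)$ is injective at $p$, so by Proposition~\ref{criterion_smoothness} the morphism $\lambda$ is smooth at $p$; being representable and smooth with bijective differential (thus of relative dimension zero), it is \'etale at $p$, and $p$ was arbitrary.

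The step I expect to require the most care is the second ingredient above: identifying $d\lambda$ and $o(\lambda)$ with the maps induced by $A\hookrightarrow B$ on hypercohomology. Concretely, one must check that ``forget the surjection $\phi$, keep the kernel $S$'' corresponds, at the level of deformation complexes, to the embedding of the complex $A$ governing deformations of the triple $(E,F,\phi)$ into the complex $B$ computing $\Ext^\bullet(S,S)$ from a locally free resolution of $S^*$, as set up in the proof of the preceding Lemma --- equivalently, that the Kodaira--Spencer map of the universal family $\cS$ on $X\times\GSyz$ fits into the long exact sequence of \eqref{ex-compl}. Once this compatibility is in hand --- and once one is careful about the cohomological shift in $(F^*\otimes E)[1]$ --- what remains is the routine long-exact-sequence chase indicated above.
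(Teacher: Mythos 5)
Your proof of \'etaleness is the same as the paper's: both identify $d\lambda$ and $o(\lambda)$ with the maps $\HH^i(A)\to\HH^i(B)$ induced by the exact sequence of complexes \eqref{ex-compl}, use the vanishing $H^i(E\otimes F^*)=0$ for $i=0,1,2$ in the resulting long exact sequence to get $T^1\lambda$ bijective and $T^2\lambda$ injective, and conclude by the infinitesimal criterion. The only (minor, equally valid) divergence is in representability: the paper deduces it from $T^0\lambda$ being an isomorphism together with the observation that the automorphism group of $\phi:E\to F$ is an open subset of a vector space and hence determined by its tangent space, whereas you argue directly that an automorphism $(a,b)$ restricting to $\id_S$ forces $a-\id_E$ to factor through $F$ and hence vanish by $\operatorname{Hom}(F,E)=H^0(E\otimes F^*)=0$ --- a more elementary route to the same faithfulness statement.
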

\begin{proof}
    Let $\phi:E\to F$ be a point in $\GSyz$. Then $T^i(\lambda)$ is induced by the natural morphism $A\to B$ above, where $T^i_S=H^i(S^*\otimes S)=\HH^i(B)$.\\
    The exact sequence of complexes (\ref{ex-compl}) induces a long exact sequence in cohomology\[
    H^i(F^*\otimes E)\to \HH^i(A)\to \HH^i(B)\to H^{i+1}(F^*\otimes E).
    \]
    The assumption that $H^i(F^*\otimes E)=0$ for $i=0,1,2$ implies that $T^i\lambda$ is an isomorphism for $i=0,1$ and injective for $i=2$. Therefore, $\lambda $ is smooth, DM type, and \'etale. Since the endomorphism group of $\phi:E\to F$ is a vector space, it is naturally isomorphic to its tangent space at the identity which is one-dimensional by $T^0\lambda$ being an isomorphism. Thus  $\Aut(\phi:E\to F)=\KK(\id_E,\id_F)$ and the morphism $\lambda$ sends it isomorphically to $\KK\id_S=\Aut(S)$.
\end{proof}


\begin{thebibliography}{99}


\bibitem{ACV} D.~Abramovich, A.~Corti and A.~Vistoli, {\em Twisted Bundles and Admissible Covers}, Communications in Algebra 
{\bf 31} (2003), 3547-3618.

\bibitem{BP} S. ~Basu and S. ~Pal, {\em Stability of syzygy bundles corresponding to stable vector bundles on algebraic surfaces}, arXiv:2105.05433.

\bibitem{Be} A. ~Beauville, {\em Some stable vector bundles with reducible theta divisor}, Manuscripta Math. {\bf 110} (2003),  343-349.

\bibitem{B} H. ~Brenner, {\em Looking out for stable syzygy bundles. With an appendix by Georg Hein}, Adv.
Math., {\bf 219} (2008) 401?427.

\bibitem{Bu} D. ~Butler, {\em Normal generation of vector bundles over a curve}, J. Differential Geom. {\bf 39} (1994), 1-34.

\bibitem{CL} F. ~Caucci and M. ~Lahoz, {\em Stability of syzygy bundles on abelian varieties}, Bull. Lond. Math. Soc. {\bf 53:4}  (2021) 1030-1036.

\bibitem{C} I. ~Coanda, {\em On the stability of syzygy bundles}, Internat. J. Math. {\bf 22} (2011), 515-534

\bibitem{CMM} L. ~Costa, P. ~Macias Marques and R. M. ~Mir\'o-Roig, {\em Stability and unobstructedness of syzygy bundles}, J. Pure Appl. Algebra {\bf 214} (2010),  1241-1262.

\bibitem{DM} C. ~Dionisi and  M. ~Maggesi, {\em Minimal resolution of general stable rank-2 vector bundles on $\PP^2$}, Bollettino dell'Unione Matematica Italiana (2003)
 {\bf 6-B} (2003), 151-160.

\bibitem{EL} L. ~Ein and R. ~Lazarsfeld, {\em Stability and restrictions of picard bundles, with an application to the
normal bundles of elliptic curves}. In Complex projective geometry (Trieste, 1989/Bergen, 1989),
London Math. Soc. Lecture Note Ser. {\bf  179} (1992), 149?156.

\bibitem{ELM} L. ~Ein, R. ~Lazarsfeld and Y. ~Mustopa, {\em Stability of syzygy bundles on an algebraic surface}. Math. Res. Lett. {\bf 20:1} (2013), 73--80.

\bibitem{FG} B. ~Fantechi and L. ~G\"ottsche, {\em Local properties and Hilbert schemes of points} in {\em Fundamental Algebraic Geometry: Grothendieck's FGA Explained}, Mathematical Surveys and Monographs {\bf 123}
(2005), American Mathematical Society.


\bibitem{FM} B. ~Fantechi and R. M. ~Mir\'o-Roig, {\em Lagrangian subspaces of the moduli space of simple sheaves on K3 surfaces}, In preparation.

 \bibitem{Gr} M. ~Green, {\em Koszul cohomology and the geometry of projective varieties}, J. Differential Geom. {\bf 19} (1984), 125-171.
 
\bibitem{G} A. ~Grothendieck, {\em } Fondements de la geometrie algebrique [Extraits du
Seminaire Bourbaki 1957?1962], Secr. Math. 11, rue Pierre Curie, Paris (1962).

\bibitem{Ha} R.~Hartshorne, {\em Algebraic Geometry}, GTM 52, Springer.

\bibitem{HMP}  M. Hering, M. Musta?? and S. Payne, {\em Positivity for toric vector bundles},  Annales de l'Institut Fourier {\bf 60} (2010),  607-640. 




\bibitem{L-MB} G.~Laumon , L.~Moret-Bailly, {\em Champs alg\'ebriques}, Ergebnisse der Mathematik und ihrer Grenzgebiete {\bf 39} (2000), Springer.

\bibitem{MM} P. ~Macias Marques and R. M. ~Mir\'o-Roig,
 {\em Stability of syzygy bundles}, Proc. Amer. Math. Soc.
{\bf 139} (2011),  3155-3170.


\bibitem{MS}  R. M. ~Mir\'o-Roig and M. ~Salat-Molt\'o, {\em Ein-Lazarsfeld-Mustopa conjecture for the  blow-up of a projective space}. Annali di Matematica Pura et Applicata, to appear.

\bibitem{MS2}  R. M. ~Mir\'o-Roig and M. ~Salat-Molt\'o, {\em Syzygy bundles of non-complete linear systems: Stability and rigidness}. Preprint.

\bibitem{MR} J. ~Mukherjee and  D. ~Raychaudhury, {\em A note on stability of syzygy bundles on Enriques and bielliptic surcaes}, Proc. Amer. Math. Soc. {\bf 150} (2022), 3715-3724.


\bibitem{Mu}  S. ~Mukai, {\em Symplectic structure of the moduli space of sheaves on an
abelian or K3 surface}, Invent. Math.{\bf  77} (1984), no. 1, 101?116.

\bibitem{Se} E.~Sernesi, {\em Deformations of Algebraic Schemes},  Grundlehren der mathematischen Wissenschaften {\bf 334} (2006).

\bibitem{St-Pr} The Stacks Project Authors, Stacks Project, (2018) https://stacks.math.columbia.edu. 


\bibitem{T} V. Trivedi, {\em Semistability of syzygy bundles on projective spaces in positive characteristics}, Internat. J. Math. {\bf 21} (2010),  1475?1504.

\end{thebibliography}
\end{document}